\newcommand{\jump}[1]{\llbracket #1 \rrbracket}
\newcommand{\fullnorm}[2][]{\left\vert\kern-0.25ex\left\vert\kern-0.25ex\left\vert #2 
      \right\vert\kern-0.25ex\right\vert\kern-0.25ex\right\vert_{\mathrm{DG} #1}}
\newcommand{\fullnormfixedsize}[2][]{\vert\kern-0.25ex\vert\kern-0.25ex\vert #2 
      \vert\kern-0.25ex\vert\kern-0.25ex\vert_{\mathrm{DG} #1}}
\newtheorem{theorem}{Theorem}
\newtheorem{lemma}{Lemma}
\newtheorem{remark}{Remark}
\newtheorem{ind-assumption}{Induction Assumption}
\newtheorem{proposition}[theorem]{Proposition}
\newtheorem{corollary}[lemma]{Corollary}
\theoremstyle{definition}
\newtheorem{definition}{Definition}
\DeclareMathOperator{\spn}{span}
\DeclareMathOperator{\rank}{rank}
\DeclareMathOperator{\diag}{diag}
\DeclareMathOperator{\dist}{dist}
\DeclareMathOperator{\Id}{Id}
\let\epsilon\varepsilon
\let\phi\varphi
\let\theta\vartheta
\newcommand{\spacenameinnorm}{\mathcal{D}}
\newcommand{\spacenameinnormhom}{\mathcal{G}}
\newcommand{\wnormdg}[5]{\left\vert\kern-0.25ex\left\vert\kern-0.25ex\left\vert #1 
      \right\vert\kern-0.25ex\right\vert\kern-0.25ex\right\vert_{\spacenameinnorm^{#3}_{#4}(#5)} }
\newcommand{\wnormdghom}[5]{\left\vert\kern-0.25ex\left\vert\kern-0.25ex\left\vert #1 
      \right\vert\kern-0.25ex\right\vert\kern-0.25ex\right\vert_{\spacenameinnormhom^{#3}_{#4}(#5)} }
\newcommand{\hK}{{\widehat{K}}}
\newcommand{\Hmix}{H^2_{\mathrm{mix}}}
\newcommand{\hPi}{{\widehat{\Pi}}}
\newcommand{\hpi}{{\widehat{\pi}}}
\newcommand{\fc}{{\mathfrak{c}}}
\newcommand{\tq}{{\tilde{q}}}
\newcommand{\dalpha}{\partial^\alpha}
\newcommand{\alpham}{{|\alpha|}}
\newcommand{\fL}{{\mathfrak{L}}}
\newcommand{\fP}{{\mathfrak{P}}}
\newcommand{\cK}{\mathcal{K}}
\newcommand{\cN}{\mathcal{N}}
\newcommand{\cL}{\mathcal{L}}
\newcommand{\cG}{\mathcal{G}}
\newcommand{\cI}{\mathcal{I}}
\newcommand{\cT}{\mathcal{T}}
\newcommand{\fT}{\mathfrak{T}}
\newcommand{\scrA}{\mathscr{A}}
\newcommand{\scrS}{\mathscr{S}}
\newcommand{\bv}{\bm{v}}
\newcommand{\bx}{\bm{x}}
\newcommand{\cJ}{\mathcal{J}}
\newcommand{\Xhpell}{X_\mathsf{hp}^{\ell, p}}
\newcommand{\XhpellR}{X_\mathsf{hp}^{\ell, p}(R)}
\newcommand{\Xhpelld}{X_{\mathsf{hp}, \mathrm{disc}}^{\ell, p}}
\newcommand{\Xhpelloned}{X^{\ell,p}_{\mathsf{hp},\mathrm{1d}}}
\newcommand{\Pihpell}{\Pi_\mathsf{hp}^{\ell, p}}
\newcommand{\PihpellR}{\Pi_{\mathsf{hp}}^{\ell, p, R}}
\newcommand{\PihpellK}{\Pi^{\ell, p}_{\mathsf{hp}_{|_K}}}
\newcommand{\Pihpellm}{\Pi_\mathsf{hp}^{\ell, p, m}}
\newcommand{\Pihpellpm}{\Pi_\mathsf{hp}^{\ell, p, \pm}}
\newcommand{\Pihpelld}{\Pi_{\mathsf{hp}, \mathrm{disc}}^{\ell, p}}
\newcommand{\Pihpelldm}{\Pi_{\mathsf{hp}, \mathrm{disc}}^{\ell, p, -}}
\newcommand{\Ndof}{N_{\mathrm{dof}}}
\newcommand{\pmax}{p}
\newcommand{\qtt}{{\mathsf{qtt}}}
\newcommand{\qttt}{{\mathsf{qt3}}}
\newcommand{\tqtt}{{\mathsf{tqtt}}}
\newcommand{\Xqttell}{X^\ell}
\newcommand{\Xqttellonedzero}{X_{\mathrm{1d}, 0}^\ell}
\newcommand{\XqttellR}{X^{\ell,R}}
\newcommand{\uqtd}{u_\mathsf{qtd}}
\newcommand{\vqttell}{v_\mathsf{qtt}^\ell}
\newcommand{\vqtttell}{{v_\mathsf{qt3}^\ell}}
\newcommand{\vtqttell}{{v_\mathsf{tqtt}^\ell}}
\newcommand{\vqtdell}{{v_\mathsf{qtd}^\ell}}
\newcommand{\Vqttell}{V_\mathsf{qtt}^\ell}
\newcommand{\Vqtdell}{V_\mathsf{qtd}^\ell}
\newcommand{\hy}{\hat{y}}
\newcommand{\hz}{\hat{z}}
\renewcommand{\bx}{x}
\newcommand{\pc}{\mathbf{a}}
\newcommand{\gev}{{\mathfrak{d}}}
\newcommand{\myqed}{}
\title{Tensor Rank bounds for Point Singularities in $\mathbb{R}^3$
}
\author{C. Marcati, M. Rakhuba, \and Ch. Schwab}
\email{\{carlo.marcati, maksim.rakhuba, christoph.schwab\}@sam.math.ethz.ch}
\address{Seminar f\"ur Angewandte Mathematik (SAM), 
ETH Z\"urich\\ R{\"a}mistrasse 101, 8092 Z{\"u}rich, Switzerland}
\thanks{The work of M. Rakhuba was supported by ETH Grant ETH-44 17-1. 
Ch. Schwab acknowledges useful discussions at the meeting WS1936 at the 
Mathematical Research Institute Oberwolfach (MFP) from 02-06Sept2019.}
\subjclass[2010]{Primary 35A35; 15A69, 35J15, 41A25, 41A46, 65N30}
\keywords{Quantized Tensor Train, tensor networks, low-rank approximation, exponential convergence, Schrödinger equation}
\begin{document}
\begin{abstract}
We analyze rates of approximation 
by quantized, tensor-structured representations
of functions with isolated point singularities in ${\mathbb R}^3$.
We consider functions in countably normed Sobolev spaces with radial weights and  
analytic- or Gevrey-type control of weighted semi-norms.
Several classes of boundary value and eigenvalue problems from science and engineering
are discussed whose solutions belong to the countably normed spaces.

It is shown that quantized, tensor-structured approximations of functions
in these classes exhibit tensor ranks bounded polylogarithmically with respect
to the accuracy $\epsilon\in(0,1)$ in the Sobolev space $H^1$. 
We prove  \emph{exponential convergence rates} 
of three specific types of quantized tensor decompositions:
quantized tensor train (QTT), transposed QTT and Tucker-QTT.
In addition, the bounds for the patchwise decompositions are uniform with respect to the position of the
point singularity.
An auxiliary result of independent interest is
the proof of exponential convergence of $hp$-finite element approximations
for Gevrey-regular functions with point singularities 
in the unit cube $Q=(0,1)^3$.
Numerical examples of function approximations 
and of Schrödinger-type eigenvalue problems
illustrate the theoretical results.
\end{abstract}
\maketitle
{\footnotesize
\tableofcontents
}

\section{Introduction}
\label{sec:Intr}
Recent years have seen the emergence of 
\emph{structured numerical linear algebra} 
in scientific computing and data science.
We mention only formatted matrix algebras, such as 
${\mathcal H}$-matrices (e.g. \cite{Hackb_HMatBook} and the references there)
and tensor formats 
(e.g. \cite{Kolda2009,OST09,Osele_DMRGLA,Hackb_TensorBook,BKTensBook2018} 
 and the references there).
To date, the impact of these methods was, first and foremost, 
on the corresponding scientific computing applications: 
being abstracted from fast multipole methods,
formatted computational matrix algebras impact directly
the numerical solution of elliptic and parabolic 
partial differential equations (see, e.g., \cite{HKT2008,BSU_TensSurv,GKT13}).
Numerical tensor algebras, 
derived from quantum chemistry (e.g. \cite{Sch2011,BSU_TensSurv} and the references there)
have obvious applications in data-science,
where massive $n$-way data naturally arises
and needs to be efficiently handled numerically.
Furthermore, tensor-structured formats have, 
in recent years, been linked to deep neural networks 
(see \cite{NCohenEtAl,DBLP:journals/corr/abs-1901-10801} and the references there).
We now comment on more specific developments 
in these areas which are directly related to the 
present paper, and the mathematical results obtained in it.

We are concerned with the approximation of functions with isolated
point singularities using tensor-structured representations.
In particular, we approximate, using quantized tensor decompositions,
three-dimensional arrays of coefficients associated
with the finite element projection of functions over trilinear Lagrange basis functions.

\emph{Quantization} refers to the reshaping of an array of coefficients of size $2^\ell \times 2^\ell \times 2^\ell$ into a multidimensional array of size $2\times \dots \times 2$.
The application of tensor decompositions (e.g., the Tensor-Train
decomposition~\cite{IO2011a}, which leads to the QTT---quantized tensor train
decomposition) to such an array can lead to a reduction in complexity
and number of parameters.

The number of parameters in a decomposition is related to the
\emph{rank} of the decomposition---i.e., the generalization of matrix rank to
multi-dimensional arrays.
Having a priori knowledge that a function of interest, e.g., the solution
to a partial differential equation, can be approximated by a low rank tensor
decomposition, allows for the application of tensor-structured algorithms 
that avoid working with full $2^\ell \times 2^\ell \times 2^\ell$ arrays of coefficients.

In particular, here we consider functions in weighted Sobolev spaces with 
radial weights and analytic- or Gevrey-type control of weighted semi-norms.
Such functions arise in a variety of scientific applications: 
nonlinear Schr\"odinger equations (e.g. \cite{Cances2006,Cances2010} and the references there), 
Hartree-Fock and density functional theory equations, 
continuum models of point defects \cite{OrtLusk},
blowup solutions in evolution equations with critical nonlinearity 
(e.g. \cite{BlowUp95} and the references there)
to name but a few.

The main result of the present paper is \emph{exponential convergence of
tensor-structured approximations of point singularities in ${\mathbb R}^3$},
ie., they admit tensor ranks bounded polylogarithmically with respect
to the accuracy $\epsilon\in(0,1)$ of the approximation, measured in the Sobolev space $H^1$. 
 
An auxiliary result of independent interest is the 
exponential convergence of $hp$-finite element (FE) approximations for 
the class of functions considered.
Due to the piecewise-polynomial structure of $hp$-FE approximants, we
can obtain their quantized representations with exact rank bounds that depend
only on the dimensions of $hp$-spaces.
This, in turn, leads to the desired rank bounds of the functions of interest.

One of the advantages of using quantized tensor decompositions---compared with
the direct application of $hp$-FE approximations---is the relative ease of implementation.
The adaptation of the number of parameters in the decomposition to the
  approximated function is based on well-known numerical linear algebra tools
such as QR and SVD decompositions.
Moreover, there exist open source codes with the implementation of 
basic linear algebra operations including solution of linear systems, 
which can be used independently of a particular application.

Note also that we do not need to know a priori the type and exact location 
of the singularity of the solution to solve PDEs in quantized tensor-structured formats.
The nonlinear structure of the decomposition allows for an ``automatic'' adaptation of the
tensor compressed representation to the regularity of the function.
This is by contrast to $hp$ methods, where mesh and polynomial degree
refinements are programmed explicitly depending on the type of singularity.
Furthermore, while the mesh of an $hp$ space has to be constructed so that
the refinement happens towards the singular point, this a priori knowledge is
not necessary in the computation of quantized tensor-structured representation.

\subsection{Tensor Structured Function Approximation}
\label{sec:TTFnAppr}
With the availability of efficient numerical realizations
of tensor-structured numerical linear algebra, 
a new perspective has been opened towards 
\emph{computational function approximation}.
Here, 
one compresses arrays of function values
in tensor formats; early work in this 
direction is \cite{TT03}, and \cite{BKTensBook2018}
contains a bibliography with a large list of 
ensuing developments based on this idea.
An (incomplete) list of references contains
\cite{Khoromskij:2011:QuanticsApprox,VKBK2012,BK+IOQuantColloc,IOConstRepFns,NouyNM2019}
where tensor rank bounds for specific functions 
have been obtained, both analytically and computationally,
in the so-called 
\emph{quantized tensor train (QTT) format}.
QTT formatted numerics for electron structure computations
were presented in \cite{KKS2011}.

Subsequently, and more directly related to the present work,
rather than rank bounds for individual functions,
\emph{
tensor rank bounds for solution classes of elliptic PDEs} 
in one and two spatial dimensions
were obtained in \cite{K15_2125,Kazeev2018,KORS17_2264}.
In \cite{Kazeev2018}, in particular, it was 
proved first that functions in 
countably normed, analytic function classes in polygons 
$D\subset  {\mathbb R}^2$
admit QTT structured tensor approximations 
with tensor ranks bounded polylogarithmically 
in terms of the approximation accuracy $\varepsilon$.
The key mathematical argument in the references cited above is 
based on \emph{analytic regularity results} for solutions of 
elliptic PDEs in polytopal domains. 
Such regularity results, implying solutions belong to countably normed spaces,
have been obtained in the past two decades for several broad classes
of (boundary value and eigenvalue problems of) elliptic PDEs, 
in \cite{Babuska1989,MS19_2743,Maday2019}.
\subsection{Problem Formulation}
\label{sec:problem-notation}
The tensor-formatted function approximation 
considered in this paper aims at establishing 
tensor rank bounds for functions in certain classes 
of locally smooth functions that admit a point singularity.
In this paper, we confine ourselves to the case
that the function under consideration admits 
singular support consisting only of one isolated point 
(we therefore speak of ``point singularities'').
Naturally, functions whose
singular support comprises of a finite number
of \emph{well-separated points} can equally be
approximated in the tensor-formats discussed here, 
with the same tensor-rank bounds, by a localization
and superposition argument. 

Weighted Sobolev spaces for functions 
with isolated point singularities have been
introduced for the analysis of elliptic problems in polygonal domains, see
\cite{Kondratev1967}, since they allow for the extension of classical
elliptic regularity theory to domains with corners.
For an overview of regularity results for elliptic boundary value problems
in conical domains, in weighted Sobolev spaces, we refer to the monographs 
\cite{Grisvard1985,Kozlov1997,Kozlov2001,Mazya2010}.

For elliptic boundary value problems in three space dimensions, 
weighted Sobolev spaces that accommodate isolated point singularities 
have also proven important in the mathematical regularity analysis 
of problems with singular potentials, 
such as electron structure calculations in quantum physics and quantum chemistry, 
see, e.g., \cite{Flad2008,FHHOS2009,FHHOS2009b}.

When a function is regular in weighted Sobolev spaces---specifically, when
analytic-type bounds can be derived on the norms of its
derivatives---piecewise polynomial approximations can be constructed, 
for example by $hp$ finite elements which converge 
exponentially (in terms of the number of parameters)
\cite{Scherer80,Guo1986a,Guo1986b,Schotzau2015,Schotzau2017}. 
This suggests the existence of an underlying low-rank structure
in suitable tensor formats;
for this reason, we are here interested in the derivation of rank
bounds for functions that belong to weighted analytic- and Gevrey- type classes.

A theory of analytic regularity in weighted Sobolev spaces has been developed
for several classes of important physical problems and we mention an incomplete
  list.
  Solutions to scalar elliptic problems with
  constant coefficients belong to analytic-type weighted spaces
  \cite{Costabel2012a,Costabel2014}, as do
  the flow and pressure obtained with the Stokes \cite{Guo2006a} and
  Navier-Stokes \cite{MS19_2743} equations in polygons. Furthermore,
  eigenfunctions to three-dimensional linear \cite{Maday2019} and nonlinear
  \cite{Maday2019b} Schrödinger equations are weighted analytic.
  In quantum chemistry, the
  wave functions computed with the non-relativistic Hartree-Fock models for
  electronic structure calculations are
  also analytic in weighted Sobolev spaces \cite[Section 7.4]{Marcati_thesis},
  \cite{ChemistryPrimer},
  with point singularities at the nuclei. We refer to Section
  \ref{sec:model-problems} for some explicit examples in this sense.
  Other instances of the occurrence of point singularities in otherwise
  smooth solutions comprise general relativity (see, e.g., \cite{visser2009kerr,Boyer1967}
  and the references there) and solutions of parabolic evolution equations 
with critical nonlinearity (see, e.g., \cite{BlowUp95} and the references there).
The results of the present work apply to all the problems cited above, whose
  solutions are weighted analytic, in the spaces that we detail in Section
  \ref{sec:GevAnFct} below.

We consider the following setting for 
quantized, tensor train (TT)-formatted function approximation in $Q=(0,1)^3$,
with one point singularity at the origin, where the functions
belong to countably-normed, weighted Sobolev spaces, where the 
weights are powers of $r = |x|$,
the euclidean distance of the point $x\in Q$ from the origin.
\subsubsection{Kondrat'ev type weighted Sobolev spaces}
\label{sec:weighted}
For integer $s\geq 0$, a real parameter $\gamma\in \mathbb{R}$, 
and summability exponent $1\leq q < \infty$, 
we introduce the \emph{homogeneous weighted Sobolev spaces} 
$\cK^{s,q}_\gamma(Q)$. 
Given the seminorm
\begin{equation}
  \label{eq:Kseminorm}
|w|_{\cK^{s,q}_\gamma(Q)} 
= 
\left( \sum_{\alpham= s} \|r^{\alpham -\gamma}\dalpha w\|^q_{L^q(Q)}  \right)^{1/q},
\end{equation}
so that the spaces $\cK^{s,q}_\gamma(Q)$ are normed by
\begin{equation*}
\|w\|_{\cK^{s,q}_\gamma(Q)}  
= 
\left( \sum_{k = 0}^{ s} |w|^q_{\cK^{k,q}_\gamma(Q)} \right)^{1/q}.
\end{equation*}
Our focus will be mostly on \emph{non-homogeneous weighted Sobolev spaces}
$\cJ^{s,q}_\gamma(Q)$, with norm given by (remark the different weight exponent)
\begin{equation*}
\| w \|_{\cJ^{s,q}_\gamma(Q)}  
= 
\left( \sum_{\alpham\leq s} \|r^{s-\gamma}\dalpha w\|^q_{L^q(Q)} \right)^{1/q}.
\end{equation*}
In the following, we will always consider the case where $q=2$, 
$0<\gamma-3/2<1$, and $s>\gamma-3/2$.
Under those hypotheses, as shown in \cite[Proposition 3.18]{Costabel2010a}, 
the above norm is equivalent to
\begin{equation}
    \label{eq:Jspace}
\| w \|_{\cJ^{s,2}_\gamma(Q)}  
\simeq 
\left( \|w \|^2_{L^2(Q)} 
+ 
\sum_{k = 1}^{s} |w|^2_{\cK^{k, 2}_\gamma(Q)}
\right)^{1/2}.
\end{equation}
Non-homogeneous spaces allow for functions with non trivial Taylor expansion 
at the
singularity and have been used, for this reason, in the analysis of 
problems in non smooth domains with Neumann boundary conditions and
of elliptic problems with singular potentials. For a thorough analysis of the
relationship between homogeneous and non homogeneous spaces, we refer the reader
to \cite{Kozlov1997} and \cite{Costabel2010a}.
\subsubsection{Gevrey and analytic function classes}
\label{sec:GevAnFct}
We denote the weighted Kondrat'ev type spaces of infinite regularity by
\begin{equation*}
\cK^{\infty,q}_\gamma(Q) = \bigcap_{s\in\mathbb{N}} \cK^{s,q}_\gamma(Q).
\end{equation*}
Furthermore, for constants $C, A>0$ and $\gev\geq 1$, 
we introduce the \emph{countably normed, homogeneous weighted Gevrey-type} 
(analytic-type when $\gev=1$) 
class
\begin{equation*}
\cK^{\varpi,q}_\gamma(Q; C, A, \gev) 
= 
\left\{ v\in \cK^{\infty,q}_\gamma(Q): \left| v\right|_{\cK^{s,q}_\gamma(Q)}
\leq 
C A^{s}(s!)^\gev, \, \text{for all }s\in\mathbb{N}_0 \right\}.
\end{equation*}
The \emph{countably normed, non-homogeneous weighted classes} 
$\cJ^{\infty, q}_\gamma(Q)$ 
are then defined as in the homogeneous case, 
while the \emph{non-homogeneous Gevrey/analytic classes} 
are given by
\begin{equation}
\label{eq:Janalytic}
\cJ^{\varpi,q}_\gamma(Q; C, A, \gev) 
= 
\left\{ v\in \cJ^{\infty,q}_\gamma(Q): | v |_{\cK^{s,q}_\gamma(Q)}
\leq 
C A^{s}(s!)^\gev, \text{integer }s>\gamma-3/2 \right\}.
\end{equation}
We write $\cK^{s,2}_\gamma(Q) = \cK^s_\gamma(Q)$ and
$\cJ^{s,2}_\gamma(Q) = \cJ^s_\gamma(Q)$; similarly we omit the 
summability exponent $q$ when it equals $2$
in the notation for the weighted Gevrey classes.
\subsubsection{Model problems}
\label{sec:model-problems}
We illustrate the scope of problems by 
listing several concrete boundary-value and eigenvalue problems 
whose solutions are known to belong
to the weighted analytic classes 
$\cK^\varpi_\gamma(\Omega)$ and $\cJ^\varpi_\gamma(\Omega)$.
Although the focus here is on three-dimensional problems, 
we start by considering
a polygon $\Omega\subset {\mathbb R}^2$ with $n\geq 3$
straight sides and corners $\fc_i$, $i=1,
\dots, n$. In this setting, the space $\cK^\varpi_\gamma(\Omega)$ contains the
\emph{corner weight function} $r_P = \prod_{i=1}^n | x - \fc_i|$, 
i.e., the seminorm \eqref{eq:Kseminorm} is replaced by
\[
| v |_{\cK^{s, q}_\gamma(\Omega)} 
= 
\left( \sum_{\alpham= s} \|r_P^{\alpham -\gamma}\dalpha w\|^q_{L^q(\Omega)}  \right)^{1/q}.
\]
Then, given an analytic (in $\overline{\Omega}$) 
external force field $f$, the \emph{Stokes equations}
\begin{equation*}
  -\nu \Delta u + \nabla p = f  \text{ in }\Omega,
  \qquad
  \nabla \cdot u = 0  \text{ in }\Omega
\end{equation*}
and the viscous, incompressible \emph{Navier-Stokes equations}
\begin{equation}
  \label{eq:NS}
  -\nu\Delta u + (u\cdot\nabla) u  + \nabla p = f  \text{ in }\Omega,
  \qquad
  \nabla \cdot u = 0  \text{ in }\Omega
\end{equation}
with homogeneous Dirichlet (``no-slip'') 
boundary conditions have been shown in \cite{MS19_2743,Guo2006a} 
to admit solutions in 
$\cK^\varpi_\gamma(\Omega)$ with $\gamma>3/2$. 
Specifically, for the nonlinear boundary value problem \eqref{eq:NS} 
we require a ``small data assumption'' which is well-known to 
ensure uniqueness of Leray-Hopf solutions, 
see, e.g., \cite[Chapter IV, Theorem 2.2]{Girault1986a}.
See Remark \ref{rmk:QTTcrnr} for further comments on 
the implication of the present work on two-dimensional problems.

In the three-dimensional setting, energy minimization problems in quantum
physics/chemistry can be transformed into eigenvalue problems whose solutions
are in the weighted analytic class \eqref{eq:Janalytic}. 
We consider here a set of isolated point singularities situated at 
$n$ nuclei in positions $R_i\in \mathbb{R}^3$,  $i=1, \dots,n$,
and function spaces with weight function $r$ such that 
$r\simeq |x-R_i|$ in the vicinity of each $R_i$, 
and $r\simeq 1$ far from all singularities resp. all nuclei.

A first example is given by a \emph{nonlinear Schrödinger equation} with polynomial nonlinearity.
Consider a compact domain without
boundary $\Omega$ (e.g., a periodic unit cell) and a potential $V$ 
such that there exists $\beta < 2$ and a constant $A_V>0$ such that 
\begin{equation*}
\forall \alpha\in {\mathbb N}_0^3: \qquad 
\|r^{\beta+\alpham} \dalpha V \|_{L^\infty(\Omega)} \leq A_V^{\alpham+1} \alpham! \;.
\end{equation*}
Then, 
the eigenfunction $u$ corresponding to the smallest eigenvalue (i.e., the ``ground state'')
of the nonlinear Schrödinger equation
\begin{equation}
  \label{eq:NLSch}
  -\frac{1}{2}\Delta u + V u + |u|^2 u = \lambda u, \qquad \| u \|_{L^2(\Omega)}=1
\end{equation}
is in $\cJ^\varpi_\gamma(\Omega)$ for some $\gamma>3/2$, see \cite[Section 7.3]{Marcati_thesis}.
Note that \eqref{eq:NLSch} is the Euler-Lagrange equation of the minimization problem
\begin{equation*}
 \inf\left\{ \int_{\Omega} |\nabla v|^2 + Vv^2 + \frac{1}{2} v^4,\, v\in H^1(\Omega), \,\|v\|_{L^2(\Omega)}=1 \right\}.
\end{equation*}

As a second example we consider the \emph{Hartree-Fock equation}. 
Let
$V_\mathrm{C}$ be the potential of the Coulomb interaction exerted on
  electrons by nuclei with charge $Z_i$ assumed to be pointlike and situated
at positions $R_i\in {\mathbb R}^3$, $i=1, \dots, n$,
i.e.,
\begin{equation*}
  V_{\mathrm{C}}(x) = -\sum_{i=1}^n \frac{Z_i}{|x-R_i|}.
\end{equation*}
The Hartree-Fock model consists in finding the smallest 
$N$ eigenvalues $\epsilon_i$ and 
the corresponding $L^2({\mathbb R}^3)$-orthonormal 
eigenfunctions $\psi_i$, $i=1, \dots, N$, 
such that 
\begin{equation}
  \label{eq:HF}
  \left( -\frac{1}{2}\Delta + V_{\mathrm{C}} \right)\psi_i 
+ \left( \frac{1}{|x|}\star\rho_\Psi \right)\psi_i 
- \sum_{j=1}^N\left(  \frac{1}{|x|}\star (\psi_j\psi_i)\right)\psi_j 
= \epsilon_i \psi_i,\quad i=1, \dots, N
\end{equation}
with $\rho_\Psi = \sum_{i=1}^N\psi_i^2$.
Then, under some conditions on the potential $V_C$ so that the solution exists \cite{Lieb1977}, 
the eigenfunctions are weighted analytic: 
\[\psi_i\in \cJ^\varpi_\gamma(\mathbb{R}^3),\qquad i=1, \dots, N,\]
see \cite[Section 7.4]{Marcati_thesis}.
Problem \eqref{eq:HF} is the Euler-Lagrange equation of the minimization problem
(see \cite[Section 9]{ChemistryPrimer})
\begin{equation*}
  \inf\left\{ E^{\mathrm{HF}}(\psi_1, \dots, \psi_N), \psi_i\in H^1(\mathbb{R}^3): \int_{\mathbb{R}^3}\psi_i\psi_j = \delta_{ij} \right\},
\end{equation*}
where
\begin{multline*}
  E^{\mathrm{HF}}(\psi_1, \dots, \psi_N) = \sum_{i=1}^N \int_{\mathbb{R}^3}|\nabla \psi_i|^2
  + \int_{\mathbb{R}^3} V \rho_{\Psi}
  + \frac{1}{2}\int_{\mathbb{R}^3} \rho_\Psi(x)\left( \frac{1}{|x|}\star\rho_\Psi \right)
  \\
 - \frac{1}{2} \int_{\mathbb{R}^3}\int_{\mathbb{R}^3} \frac{\tau_{\Psi}(x,y)}{|x-y|},
\end{multline*}
with $\tau_\Psi(x,y) = \sum_{i=1}^N\psi_i(x)\psi_i(y)$.
\begin{remark}[Near-Singularity]\label{rmk:NearSing}
    While functions of the form ($r\in \mathbb{R}_+, \omega\in \mathbb{S}_2$ spherical coordinates)
    \begin{equation}
      \label{eq:r+a}
      u_a(r, \omega) = (r^2+a^2)^{\beta/2} v(\omega), \qquad v \text{ analytic in }\mathbb{S}_2
    \end{equation}
    are, for $a\neq 0$ and for $\beta>0$, formally (mathematically) smooth,
    their behavior approaches that of functions with point singularities when $|a| \ll 1$. 
    Specifically, if $|a|\leq a_{\max{}}$, there exist positive constants
    $C$ and $A$ \emph{independent of $a$} such that $u_a \in
    \cJ^\varpi_\gamma(Q; C, A, 1)$ for $\gamma<\beta+3/2$; 
    hence, the bounds obtained in the present
    paper allow for the derivation of rank bounds for the  
    quantized tensor-formatted approximations considered,
    which are \emph{uniform as the parameter $a\downarrow 0$} 
    for functions of the form \eqref{eq:r+a} .

The same remark applies to certain \emph{merging singularities}
as also arise, for example, in binary star or black hole models.
Consider e.g. two nuclei situated at locations
$R_1 = -\varepsilon e_1$, $R_2 = \varepsilon e_1$ in 
${\mathbb R}^3$ at distance $2\varepsilon$ for small $\varepsilon > 0$.
Denoting by $r_i = |x-R_i|$, $i=1,2$, 
and $r = |x|$, 
we find 
$ v(x) = r_1^2+r_2^2 = 2(r^2 + \varepsilon^2)$
i.e., 
once more a function of the above form with $a = \varepsilon$.
\end{remark}
\subsection{Contributions}
\label{sec:contr}
In the present work, the following novel mathematical results are obtained.
First, we analyze approximation rates of tensor-structured approximations 
of smooth functions with isolated point singularities. 
As compared to exponential convergence results results for \emph{analytic} 
functions with point singularities, we here establish exponential convergence
of $hp$-finite element (FE) approximations on geometric meshes of axiparallel quadrilaterals resp.
hexahedra analogous to \cite{Feischl2018} also for Gevrey regular functions.

We then address tensor-formatted approximations.
Generalizing results also in two variables, 
in the present paper we extend the analysis in \cite{Kazeev2018} to 
quantized, TT-structured function approximation
of functions from the countably weighted, Gevrey-type classes 
$\cK^{\varpi,q}_\gamma(Q; C, A, \gev)$ 
and
$\cJ^{\varpi,q}_\gamma(Q; C, A, \gev)$ 
as defined above.

The corresponding results in three spatial variables are novel.
They also extend the QTT rank bounds in \cite{Kazeev2018} 
to Gevrey-$\gev$ regular functions (see Remark \ref{rmk:QTTcrnr}). 
They also constitute a building block for the derivation of
corresponding QTT rank bounds for edge and face singularities
in three space dimensions, which we do not detail here.

In particular, 
we prove in three physical variables
for analytic (resp. Gevrey) functions with point singularities, 
for the classical tensor format asymptotic upper bounds on quantized tensor ranks 
at prescribed accuracy $\varepsilon$ 
which are better than the corresponding bounds for the transposed TT format 
introduced in \cite{Kazeev2018} (in two dimensions).

We show numerical results indicating the correctness of the presently obtained results,
and also strongly suggesting that similar ranks are achieved in tensor-formatted
PDE solvers, provided the PDE solutions belong to the countably normed classes
introduced in Section~\ref{sec:GevAnFct}.
\subsection{Structure of this paper}
\label{sec:Struct}
In Section~\ref{sec:tensor}, we review the definitions and notation
of quantized, tensor structured function approximation which are
to be employed throughout the remainder of the article,
extending the concepts of \cite{IOConstRepFns}.
In Sections~\ref{sec:TT}-\ref{sec:TuckQTT}, 
in particular, we introduce the \emph{tensor train} (TT), 
the \emph{quantized TT format} (QTT), 
\emph{transposed quantized TT format} (QT3)
and the \emph{Tucker quantized TT format} (TQTT),
some of which allow to prove better rank bounds 
on functions with point singularities.

Section~\ref{sec:mesh} introduces tools from numerical analysis which
we require in the arguments for the TT-rank bounds for function approximation.
Section~\ref{sec:VirtUnifSpc} introduces in particular
the notion of \emph{``uniform background mesh''} (never directly accessed in the
QTT formats) which is the basis for all quantized TT function representations.
Section~\ref{sec:hp} recapitulates several notions and auxiliary results from
the theory of so-called $hp$-approximation from \cite{Schwabphp98,Schotzau2015,Schotzau2017}.
Section~\ref{sec:interp-error} introduces a combined (quasi) interpolation projector,
which was introduced in \cite{Kazeev2018} (in two dimensions) and which is crucial
in establishing the rank bounds.
Section~\ref{sec:analytic-approximation} then contains statements and proofs of
the main results of the present paper: 
tensor-rank bounds for generic functions in the various
countably-normed classes introduced in Section~\ref{sec:problem-notation} above.
These bounds are obtained for functions with the singularity at a corner of
the domain; they are extended to the case of an internal singular point (and
to a patchwise formulation that allows for more complex domains) in Section
\ref{sec:ExtRkBdIntSing} in the appendix.

Section~\ref{sec:NumExp} presents detailed numerical experiments 
which exhibit actual TT rank bounds in the various formats for model singular
functions in three space dimensions.
The Section~\ref{sec:Concl} provides a brief summary of the main results,
and possible further research directions.
The Appendix, Section~\ref{sec:gevrey} contains (novel) auxiliary results
on exponential rates of convergence of $hp$-approximations for 
Gevrey-regular functions in $\mathbb{R}^3$ with point singularities, 
generalizing \cite{Feischl2018} to axiparallel
geometric meshes of hexahedra with $1$-irregular edges and faces.
%
\section{Tensor structured representations}
\label{sec:tensor}
The mathematical issue in tensor-formatted function approximation
consists in finding a compressed representation/approximation 
of three-way tensors
\[A\in \mathbb{R}^{2^\ell\times2^\ell\times2^\ell},\]
for $\ell \in \mathbb{N}$. 
All techniques that we examine are based on the \emph{Quantized Tensor Train} (QTT) 
representation, see e.g. 
\cite{Kazeev2018,KORS17_2264,Oseledets:2010:QTT,Khoromskij:2011:QuanticsApprox,IOConstRepFns} 
and the references there. 
In particular, we will analyze three tensor compressed
representations, that we call here \emph{(classic) QTT}, \emph{transposed QTT} (QT3),
and \emph{Tucker-QTT} (TQTT) representation, respectively. 
The difference between these schemes lies
in the arrangement of the three physical dimensions of the tensor $A$ 
in the corresponding TT format 
in the following, after a brief introduction of QTT representations, 
we detail the three formats mentioned.
\subsection{Notation}
\label{sec:Notat}
Throughout, we adopt the following notation, from \cite{Kazeev2018}.
Given $n\in \mathbb{N}$ indices $i_1, \dots, i_n$ 
such that $i_j\in \{0, \dots, k_j-1\}$ for all $j=1, \dots, n$, we write
\begin{equation*}
  \overline{i_1\dots i_n} = i_1 \prod_{j=2}^n k_j + i_2 \prod_{j=3}^nk_j + \dots + i_n.
\end{equation*}
Hereafter, 
by \emph{tensor} we will mean multi-dimensional array.
Furthermore, 
for an axiparallel $d$-dimensional ($d\leq 3$) subset $K\in Q$, 
the space $\mathbb{Q}_p(K)$ is the tensor product space of 
$d$-variate polynomials in $K$ of maximum polynomial degree $p$ in each variable.
Furthermore, we will indicate by a colon~``:'' a whole slice of a tensor.
For example, given a four-dimensional tensor $A\in \mathbb{R}^{n_1\times n_2\times n_3\times n_4}$
with entries $a_{i,j,k,l}$, we will write
\begin{equation*}
A_{i, :, :, l} = \{a_{i, j, k, l}\}_{j=1, \dots, n_2, k=1, \dots, n_3}\in \mathbb{R}^{n_2, n_3}.
\end{equation*}
\subsection{Tensor Train (TT) Format}
\label{sec:TT}
Tensor Trains (TT)~\cite{IO2011a}, also known as Matrix Product States (MPS) 
in the computational physics community \cite{Sch2011}, 
provide an efficient way to represent high-dimensional tensors, 
provided these tensors have an underlying low-rank structure. 
Let $d \gg 1$, and consider the $d$-dimensional tensor
\begin{equation} \label{eq:TensB}
  B \in \mathbb{R}^{n_1\times \cdots\times n_d}.
\end{equation}
The \emph{Tensor Train representation} of the $d$-variate tensor 
$B$ in \eqref{eq:TensB} 
is given in terms of the \emph{core tensors}\footnote{
The cores $U^k$ can be naturally considered as three-dimensional arrays
$\widetilde U^{k}\in \mathbb{R}^{r_{i-1}\times n_i\times r_i}$ 
so that 
$\left(U^k(i_k)\right)_{\alpha_{k-1},\alpha_k} 
 = 
 \widetilde U^{k}_{\alpha_{k-1},i_k,\alpha_k}$, $\alpha_{k-1}=1,\dots,r_{k-1}$, $\alpha_{k}=1,\dots,r_k$.
Using the three-variate arrays notation, 
the TT decomposition of $B$ can be written as
\begin{equation}\label{eq:tt_sumsrepr}
	B_{i_1, \dots, i_d} 
        = 
        \sum_{\alpha_0=1}^{r_0}\dots \sum_{\alpha_d=1}^{r_d} 
        \widetilde U^1_{\alpha_0,i_1,\alpha_1} \cdots \widetilde U^d_{\alpha_{d-1},i_d,\alpha_d},
\end{equation}
which is also used in~\cite[Eq.~(1.3)]{IO2011a}.
For clarity of presentation, 
we use the representation~\eqref{eq:TT-def}, which is more compact
than~\eqref{eq:tt_sumsrepr} (and equivalent to it).
We also do not distinguish between the mappings $U^k$ and the three-dimensional arrays $\widetilde U^{k}$.
}
\begin{equation*}
  U^k : \{0, \dots, n_k-1\} \to \mathbb{R}^{r_{k-1}\times r_k} \qquad k=1, \dots, d
\end{equation*}
where $r_k\in \mathbb{N}$ (with the restriction $r_0=r_d=1$) 
and such that~\cite[Eq.~(1.2)]{IO2011a}
\begin{equation}
  \label{eq:TT-def}
  B_{i_1, \dots, i_d} = U^1(i_1) \cdots U^d(i_d).
\end{equation}
Suppose for ease of presentation that $r_i = r$ and $n_i = n$ for all $i$.
Then, the TT representation \eqref{eq:TT-def} has
\begin{equation*}
  \Ndof = \mathcal{O}(dnr^2)
\end{equation*}
parameters. 
The TT format is therefore an efficient decomposition if a 
$d$-way tensor can be written as a tensor train with \emph{low ranks} $r_i$.
Due to the equality in \eqref{eq:TT-def} the representation we have introduced
is an \emph{exact} TT representation; in practice, a matrix may not admit an
exact low rank TT representation, but a low rank \emph{approximation} could
instead be available.
\subsection{Rank bound analysis of TT representations}
\label{sec:RkBdAnTT}
To examine the issue of low rank approximation of high-dimensional tensors, 
we require the concept of \emph{unfolding matrices} (``unfoldings'' for short), 
used to derive \emph{rank bounds} on the TT representation of a tensor.
\begin{definition}[Unfolding matrix]
 Let $d\in \mathbb{N}$ and $n_i\in \mathbb{N}$ for $i=1, \dots, d$. Given a
 tensor $B\in\mathbb{R}^{n_1\times \cdots\times n_d}$, we define for all $q = 1,
 \dots, d-1$ its \emph{unfolding matrices} $B^{(q)}\in \mathbb{R}^{n_1\cdots n_q \times n_{q+1}\cdots n_d}$ as
 \begin{equation*}
   B^{(q)} _{\overline{i_1\dots i_q}, \overline{i_{q+1}\dots i_d}}= B_{i_1, \dots, i_d},
    \qquad \text{ for all }i_k = 1, \dots, n_k\text{ and }k=1, \dots, d
 \end{equation*}
i.e., the matrix with row index given by the concatenation of the first $q$
indices, and column index given by the concatenation of the remaining ones.
\end{definition}
In the case that the unfolding matrices of a tensor 
can be approximated by low-rank matrices,
then a low rank TT approximation exists. 
This is made precise in the following result.
\begin{proposition}{\cite[Theorem 2.2]{IO2011a}}
  Let $B\in \mathbb{R}^{n_1\times\dots\times n_d}$ such that its unfolding
  matrices $B^{(q)}$ can be decomposed as 
  \begin{equation*}
  B^{(q)} = R^q + E^q, 
  \quad \rank R^q = r_q,
  \quad \| E^q\|_F \leq \epsilon_q, 
  \quad \text{for all }q=1, \dots, d-1.
  \end{equation*}
There exists a tensor $C$ with 
TT representation \eqref{eq:TT-def} and TT-ranks $r_q$ 
such that
  \begin{equation*}
    \| B-C\|_F^2 \leq \sum_{q=1}^{d-1} \epsilon_q^2.
  \end{equation*}
\end{proposition}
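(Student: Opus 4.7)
The plan is to prove the result constructively via the TT--SVD algorithm: one builds the cores $U^1, \dots, U^d$ by a sequence of truncated singular value decompositions applied to successive reshapings of $B$, with the truncation rank at step $q$ set to $r_q$. By Eckart--Young together with the hypothesis $B^{(q)} = R^q + E^q$, the best rank-$r_q$ approximation of $B^{(q)}$ has Frobenius error at most $\|E^q\|_F \leq \epsilon_q$, and this will drive the per-step error bound.

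First I would apply the SVD to the first unfolding $B^{(1)} \in \mathbb{R}^{n_1 \times (n_2 \cdots n_d)}$ and truncate to rank $r_1$, factoring the truncate as $Q_1 M_1$ with $Q_1 \in \mathbb{R}^{n_1 \times r_1}$ having orthonormal columns; the columns of $Q_1$ furnish the first core $U^1$. I would then reshape the remainder $M_1$ into an $(r_1 n_2) \times (n_3 \cdots n_d)$ matrix, truncate it to rank $r_2$, extract the second core $U^2$ (of shape $r_1 \times n_2 \times r_2$) from the left factor, and iterate; after the $(d-1)$-th step, the last right factor becomes the final core $U^d$. This produces a tensor $C$ with a TT representation of the prescribed ranks $r_1, \dots, r_{d-1}$.

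The main obstacle, and the crux of the argument, is controlling how the truncation errors accumulate. The key observations are that left multiplication by orthonormal cores is an isometry in the Frobenius norm, so the step-$q$ truncation error contributes exactly its Frobenius norm to $\|B - C\|_F$, and that the successive errors live in mutually orthogonal subspaces, since each residual lies in the kernel of the orthogonal projector used in the previous step. A Pythagorean identity then yields
\begin{equation*}
\| B - C \|_F^2 = \sum_{q=1}^{d-1} \delta_q^2,
\end{equation*}
where $\delta_q$ denotes the step-$q$ truncation error. It remains to show $\delta_q \leq \epsilon_q$, and this is the technical point: one verifies that the matrix truncated at step $q$ is obtained from $B^{(q)}$ by left multiplication by an orthonormal factor assembled from previously extracted cores. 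Since composing with an isometry cannot increase the distance to the set of rank-$r_q$ matrices, Eckart--Young applied to $B^{(q)}$ together with the hypothesized existence of $R^q$ with $\|E^q\|_F \leq \epsilon_q$ yields $\delta_q \leq \epsilon_q$, completing the bound.
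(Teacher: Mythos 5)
Your proposal is correct and follows the standard TT--SVD argument of Oseledets, which is exactly the proof of the cited result \cite[Theorem 2.2]{IO2011a}; the paper itself states this proposition without proof, deferring to that reference. The two key points you identify --- the Pythagorean splitting of the per-step truncation errors via the orthogonality of each residual to the range of the previously extracted orthonormal factors, and the fact that left multiplication by the adjoint of an isometry does not increase the distance of $B^{(q)}$ to the set of rank-$r_q$ matrices, so that Eckart--Young gives $\delta_q \leq \epsilon_q$ --- are precisely the ingredients of the original proof.
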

The above theorem includes as a sub-case the rank bound of exact
TT-representation, 
by affirming the existence of an exact TT-rank $r_q$ representation
of a tensor with unfolding matrix rank bounded by $r_q$, $q=1,\dots,d-1$.
\subsection{Quantized Tensor Train (QTT) format in one physical dimension}
\label{sec:QQT1d}
We introduce QTT representations in the simplified setting 
of QTT approximation of vectors
\begin{equation}
  \label{eq:vector}
  v\in \mathbb{R}^{2^\ell},
\end{equation}
for $\ell\in\mathbb{N}$. 
Generalizations to the multi-dimensional
case will be the subject of the next sections.

The QTT decomposition introduced in~\cite{Oseledets:2010:QTT,Khoromskij:2011:QuanticsApprox}
extends the use of the TT approximation to the case of \emph{low-dimensional} tensors with 
a large number of elements. 
To do so, 
the low-dimensional tensor is reshaped into a high-dimensional one, 
which is subsequently TT-(re)approximated.
Applied to the vector in \eqref{eq:vector}, 
algorithmically this is achieved by reshaping it into the
$\ell$-dimensional tensor $\widetilde{v}$ 
such that
\begin{equation*}
  \widetilde{v}_{i_1, \dots, i_\ell} = v_{\overline{i_1\dots i_\ell}},
\end{equation*}
where $i_k \in \{0,1\}$ for all $k=1, \dots, \ell$. 
The tensor $\widetilde{v}$ can then be represented in TT form. 
We formalize this representation in the following definition.
\begin{definition}[Univariate QTT decomposition]
\label{def:qtt-1d}
 Given $\ell\in \mathbb{N}$ and 
 a vector $v\in \mathbb{R}^{2^\ell}$,
 $v$ admits a \emph{QTT representation}
 with QTT ranks $r_0, \dots, r_\ell$ and QTT cores $U^i : \{0,1\}\to
 \mathbb{R}^{r_{i-1}\times r_i}$
 if
 \begin{equation*}
   v_{\overline{i_1\dots i_\ell}} = U^1(i_1)\cdots U^\ell(i_\ell), \qquad \text{ for all }(i_1, \dots, i_\ell)\in \{0,1\}^\ell.
 \end{equation*}
\end{definition}
As before, the tensor cores 
can also be interpreted as three-way arrays in
$\mathbb{R}^{r_{i-1}\times 2 \times r_i}$.
\subsection{Classic QTT format in three physical space dimensions} 
\label{sec:ClassQTT}
The ``classic QTT'' format is the straightforward generalization of the 
univariate QTT format in Definition \ref{def:qtt-1d} 
to the multivariate case. 

In this way, 
a three-dimensional tensor 
$A\in \mathbb{R}^{2^\ell \times 2^\ell \times 2^\ell}$ is reshaped into the tensor 
\begin{equation}
  \label{eq:QTT-reshape}
  \widetilde{A}^{\qtt} \in \mathbb{R}^{\overbrace{\scriptstyle 2\times \cdots \times 2}^{3 \ell\text{ times }}}
  \quad 
  \text{ such that }\quad \widetilde{A}^{\qtt}_{i_1,\dots, i_\ell, j_1, \dots, j_\ell, k_1, \dots, k_\ell} 
  = A_{\overline{i_1\dots i_\ell}, \overline{j_1\dots, j_\ell}, \overline{k_1\dots k_\ell}}  
\end{equation}
for all $i_n, j_n, k_n \in \{0,1\}$, 
which is subsequently TT-decomposed. 
\begin{definition}[Classic QTT decomposition]
  \label{def:QTT}
  Given $A\in \mathbb{R}^{2^\ell \times 2^\ell \times 2^\ell}$ for an $\ell\in \mathbb{N}$, 
  we say that $A$ admits a \emph{classic QTT decomposition} with ranks 
  $r_0, \dots, r_{\ell}$, $s_0, \dots, s_\ell$, $t_0, \dots, t_{\ell}$
  and cores $U^1, \dots, U^\ell$, $V^1, \dots, V^\ell$, $W^1, \dots, W^\ell$ if
  \begin{equation}
    \label{eq:QTT-def}
    A_{\overline{i_1\dots i_\ell}, \overline{j_1\dots, j_\ell}, \overline{k_1\dots k_\ell}} 
    = 
    U^1(i_1)\cdots U^\ell(i_\ell)V^1(j_1)\cdots V^\ell(j_\ell)W^1(k_1)\cdots W^\ell(k_\ell)
  \end{equation}
  for all $i_n, j_n, k_n\in \{0,1\}$, and where
  \begin{equation}
    \label{eq:qtt-cores-def}
    U^n:\{0, 1\}\to \mathbb{R}^{r_{n-1}\times r_n}, \quad
    V^n:\{0, 1\}\to \mathbb{R}^{s_{n-1}\times s_n}, \quad
    W^n:\{0, 1\}\to \mathbb{R}^{t_{n-1}\times t_n},
  \end{equation}
  for $n = 1, \dots, \ell$. We have the restriction on the ranks
  \begin{equation*}
    r_0 = t_\ell = 1, \quad r_{\ell} = s_0, \quad s_\ell = t_0.
  \end{equation*}
  \end{definition}
We denote by 
$\fT^{\qtt} : \mathbb{R}^{2^\ell\times 2^\ell \times 2^\ell} \to \mathbb{R}^{\overbrace{\scriptstyle 2\times
        \cdots \times 2}^{3 \ell\text{ times }}}$ the ``classic QTT''
    tensorization given by
    \begin{equation}
      \label{eq:QTT-tensorization}
      \fT^{\qtt}(A) = \widetilde{A}^{\qtt},
    \end{equation}
    with $\widetilde{A}^{\qtt}$ defined in \eqref{eq:QTT-reshape}.

The (classic) QTT decomposition is symbolically depicted
in tensor network format in Figure \ref{fig:networks}, panel (A).
\subsection{Transposed order QTT format in three physical space dimensions}
\label{sec:QT3}
In the \emph{transposed order QTT} format (referred to as ``QT3'' format)
introduced first in \cite{Kazeev2018},
after reshaping the tensor $A$ as in \eqref{eq:QTT-reshape}, 
the indices from the different (three) physical
dimensions are regrouped together, resulting in a tensor
\begin{equation}
  \label{eq:transposedQTT-reshape}
   \widetilde{A}^{\qttt} \in \mathbb{R}^{\overbrace{\scriptstyle 8\times
        \cdots \times 8}^{\ell\text{ times }}}\quad 
        \text{ such that }\quad 
         \widetilde{A}^{\qttt}_{\overline{i_1j_1k_1},\dots, \overline{i_\ell j_\ell k_\ell}}
   = 
   A_{\overline{i_1\dots i_\ell}, \overline{j_1\dots, j_\ell}, \overline{k_1\dots k_\ell}}  
\end{equation}
for all $i_n, j_n, k_n \in \{0,1\}$. 
The tensor $\widetilde{A}^{\qttt}$ is subsequently \emph{TT-decomposed},
as specified in the next definition.
\begin{definition}[Transposed order QTT]
  \label{def:QT3}
Let $\ell \in \mathbb{N}$ and let $A\in \mathbb{R}^{2^\ell \times 2^\ell \times 2^\ell}$. 
The tensor $A$ admits a transposed QTT decomposition with 
tensor ranks $r_0, \dots, r_{\ell}$ and cores $U_1, \dots, U_\ell$ if
\begin{equation}
 \label{eq:transposedQTT-def}
    A_{\overline{i_1\dots i_\ell}, \overline{j_1\dots, j_\ell}, 
      \overline{k_1\dots k_\ell}} = U^1(\overline{i_1j_1k_1})\cdots U^\ell(\overline{i_\ell j_\ell k_\ell}),
\end{equation}
for all $i_n, j_n, k_n\in \{0,1\}$, and where 
$U^n:\{0, \dots, 7\}\to \mathbb{R}^{r_{n-1}\times r_n}$, for $n = 1, \dots, \ell$. 
We have the restriction on the ranks $r_0 = r_\ell = 1$.
\end{definition}
We denote by $\fT^{\qttt} : \mathbb{R}^{2^\ell\times 2^\ell \times
  2^\ell} \to \mathbb{R}^{\overbrace{\scriptstyle 8\times
        \cdots \times 8}^{\ell\text{ times }}}$ the ``transposed QTT''
    tensorization given by
    \begin{equation}
      \label{eq:transposedQTT-tensorization}
      \fT^{\qttt}(A) = \widetilde{A}^{\qttt},
    \end{equation}
    with $\widetilde{A}^{\qttt}$ defined in \eqref{eq:transposedQTT-reshape}.

A representation of the transposed order QTT decomposition in tensor network format is
given in Figure \ref{subfig:transposedqtt}.

\subsection{Tucker-QTT}
\label{sec:TuckQTT}
The Tucker-QTT (TQTT) decomposition is a combination of the 
Tucker and the QTT decompositions, first considered in~\cite{Dolgov:2013:ttqtt}.
A tensor $A\in\mathbb{R}^{2^\ell\times 2^\ell \times 2^\ell}$ is 
\emph{represented in the Tucker decomposition}
if
\[
	A_{ijk} = \sum_{\beta_1 = 1}^{R_1}\sum_{\beta_2 = 1}^{R_2}\sum_{\beta_3 = 1}^{R_3} 
	G_{\beta_1 \beta_2 \beta_3}
	U_{\beta_1}(i) V_{\beta_2}(j) W_{\beta_3}(k)
\]
where $R_1,R_2,R_3 \in {\mathbb N}$ 
are the Tucker ranks, the tensor 
$G\in \mathbb{R}^{R_1\times R_2\times R_3}$ is the Tucker core 
and the Tucker factors $U,V,W$ can be considered as matrices 
$U\in\mathbb{R}^{2^\ell \times R_1}$, $V\in\mathbb{R}^{2^\ell \times R_2}$, $W\in\mathbb{R}^{2^\ell \times R_3}$.
In the TQTT decomposition, the factor matrices $U,V,W$ are given by QTT decompositions, 
where, e.g. for $U$, 
only one of the QTT cores depends on the corresponding column number $\beta_1$:
\begin{equation}
  \label{eq:TQTT-cores}
	\begin{split}
		& U_{\beta_1}(i) = U^1_{\beta_1}(i_1) U^2(i_2)\dots U^\ell(i_\ell), \quad i = \overline{i_1\dots i_\ell}, \\ 
		& V_{\beta_2}(j) = V^1_{\beta_2}(j_1) V^2(j_2)\dots V^\ell(j_\ell), \quad j = \overline{j_1\dots j_\ell}, \\ 
		& W_{\beta_3}(k) = W^1_{\beta_3}(k_1) W^2(k_2)\dots W^\ell(k_\ell), \quad k = \overline{k_1\dots k_\ell}, \\
	\end{split}
\end{equation}
We denote the QTT ranks of 
$U,V,W$ as $\{r_0, r_1, \dots,r_{\ell}\}$, $\{s_0,s_1,\dots,s_{\ell}\}$ and $\{t_0,t_1,\dots,t_{\ell}\}$ with the constraints $r_0 = R_1$, $s_0=R_2$, $t_0=R_3$ and $r_{\ell} = s_{\ell} = t_{\ell} = 1$.

\begin{definition}[Tucker-QTT (TQTT) representation] 
Let $\ell \in \mathbb{N}$ and let $A\in \mathbb{R}^{2^\ell \times 2^\ell \times 2^\ell}$. 
$A$ admits a Tucker-QTT decomposition with Tucker ranks
$R_1,R_2,R_3$ and QTT ranks $r_0,r_1,\dots,r_{\ell}$,
$s_0,\dots,s_{\ell}$, $t_0,\dots,t_{\ell}$
if there exist
a Tucker core $G\in \mathbb{R}^{R_1\times R_2 \times R_3}$ 
and 
QTT cores 
$U^1, \dots, U^\ell$, $V^1,\dots,V^\ell$, $W^1,\dots, W^\ell$ defined as in \eqref{eq:TQTT-cores}
such that
\begin{equation}
\label{eq:TQTT-def}
\begin{aligned}
  A_{\overline{i_1\dots i_\ell}, \overline{j_1\dots, j_\ell}, \overline{k_1\dots k_\ell}} &
  \\
   \quad = \sum_{\beta_1, \beta_2, \beta_3 = 1}^{R_1, R_2, R_3}
  &G_{\beta_1, \beta_2, \beta_3} 
 U^1_{{\beta_1}}(i_1) U^2(i_2) \ldots U^\ell(i_\ell) \\
 & V^1_{{\beta_2}}(j_1) V^2(j_2)\ldots V^\ell(j_\ell) 
 W^1_{{\beta_3}}(k_1) W^2(k_2)\ldots W^\ell(k_\ell).
\end{aligned}
\end{equation}
\end{definition}
\subsection{Degrees of freedom}
\label{sec:DOFs}
Supposing for ease of notation that  $r_n= s_n = t_n = r_{\qtt}$ for the classic QTT representation, 
$r_n = r_{\qttt}$ for the transposed one, and 
$r_n= s_n = t_n = r_{\tqtt}$ and $R_1 = R_2 = R_3 = R$ for Tucker-QTT, 
the number $\Ndof$ of parameters in the 
QTT representations is bounded as
\begin{equation}
  \label{eq:Ndof-TT}
  \Ndof =
  {\small 
  \begin{cases}
2\left(( 3\ell -2)r_{\qtt}^2 + 2r_{\qtt} \right) = {\mathcal{O}(\ell r_{\qtt}^2)}& \text{classic QTT}\\
8\left( (\ell-2)r_{\qttt}^2  +2r_{\qttt} \right) = {\mathcal{O}(\ell r_{\qttt}^2)}&  \text{transposed QTT}\\
R^3 + 6\left( (\ell-2)r_{\tqtt}^2 + (R+1) r_{\tqtt} \right)= \mathcal{O}(R^3 + \ell r_{\tqtt}^2 + Rr_{\tqtt})& \text{Tucker-QTT}.
  \end{cases}
} 
\end{equation}
\begin{figure}
  \centering
  \begin{subfigure}{.7\linewidth}
    \centering
   \includegraphics[width=.9\textwidth]{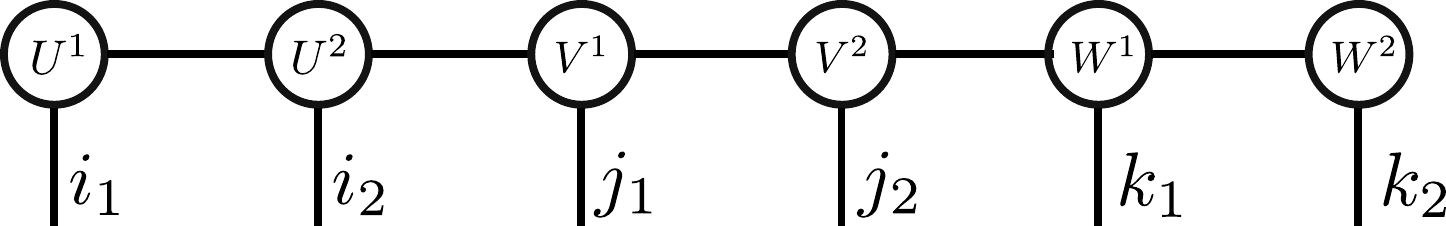}
   \caption{}
   \label{subfig:qtt}
 \end{subfigure}%
\begin{subfigure}{.3\linewidth}
    \centering
   \includegraphics[width=.7\textwidth]{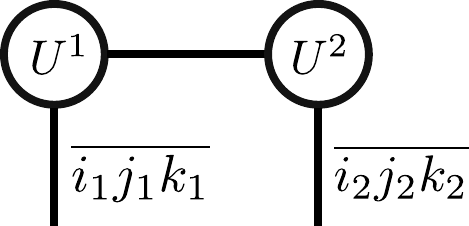}
   \caption{}
   \label{subfig:transposedqtt}
  \end{subfigure}
 \\
\begin{subfigure}{\linewidth}
    \centering
   \includegraphics[width=.4\textwidth]{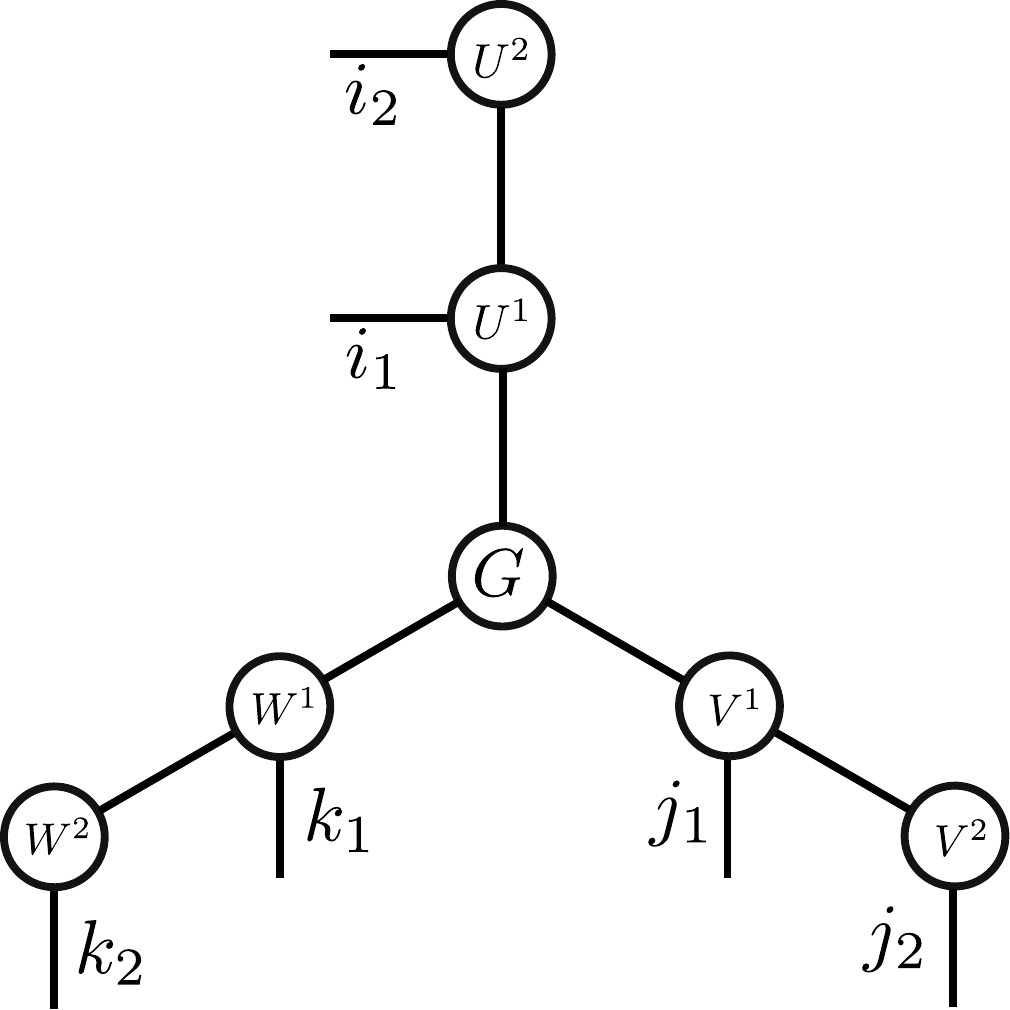}
   \caption{}
   \label{subfig:tqtt}
  \end{subfigure}
  \caption{Tensor networks for the QTT (\subref{subfig:qtt}), transposed QTT (QT3)
    (\subref{subfig:transposedqtt}), and Tucker-QTT ({TQTT}) representations (\subref{subfig:tqtt}). 
    Each node represents a tensor 
    with edges in the network indicating indices. An edge connecting two
    nodes is a contracted index (corresponding to tensor multiplication).
    This can be seen comparing the networks with equations \eqref{eq:QTT-def},
    \eqref{eq:transposedQTT-def}, and \eqref{eq:TQTT-def}.}
  \label{fig:networks}
\end{figure}
\section{Functional setting}
\label{sec:mesh}
Our analysis will require the introduction of two different meshes and of two
respective finite element spaces in the cube $Q$.
The first one is a \emph{uniform
tensor product mesh} with distance between nodes given by $h_\ell = 2^{-\ell}$.
This mesh contains $2^\ell$ nodes in every physical direction; given a function
$f$ defined over $Q$, the point values of $f$ at the mesh points can be grouped
in a three-dimensional tensor of dimension $2^\ell\times 2^\ell \times 2^\ell$,
which can be QTT-approximated in the formats introduced in the previous section.
Note that, in practice, one does not need to compute the values of the function
at all $2^{3\ell}$ mesh points, 
see e.g. \cite{Oseledets2010}, as this would undermine the efficiency of
tensor compressed methods 
(hence the ``virtual'' character of the ``background-mesh''). 
Furthermore, 
tensor-formatted closed forms of some discrete differential operators exist, 
see e.g. \cite{Khoromskij:2011:QuanticsApprox,BK+IOQuantColloc,VKBK2012}.
This can be used to discretize certain partial differential equations in quantized tensor format, 
as it will be shown in the sequel.
The space of (tensor-formatted) functions on the uniform mesh 
is the space of $\mathbb{Q}_1$ finite elements, i.e., the tensor product of one-dimensional
Lagrange functions associated with mesh nodes.

The second finite dimensional space we introduce is the \emph{auxiliary $hp$ space}. 
This space is introduced here only for proving tensor rank bounds 
of the QTT-structured approximation. It is
never accessed during numerical computation in the tensor formats.
The $hp$ space is, in particular,
an $H^1$-conforming finite element space, on a 
mesh with elements geometrically refined towards the origin. 
The polynomial degree of functions in the $hp$ space is, instead,
increasing polynomially with the number of geometric mesh layers.
This is made more precise in Section \ref{sec:hp} below.
The role of the auxiliary 
$hp$ finite element approximation is to provide an
\emph{exponentially convergent, continuous and piecewise polynomial approximation}
on a \emph{bisection geometric partition which is compatible with the 
virtual mesh in the $\mathbb{Q}_1$-approximation} 
for generic functions in the weighted Sobolev space $\cJ^\varpi_{\gamma}(Q)$.

A function in 
$v\in\cJ^\varpi_\gamma(\Omega)$ can then be approximated---with exponential
accuracy---by its projection $v_{\mathsf{hp}}$ into the $hp$ space. 
By re-interpolating
$v_{\mathsf{hp}}$ on the virtual mesh and QTT-compressing the resulting tensor, 
we establish existence of quantized, tensor-structured approximations 
with polylogarithmic bounds on the QTT ranks and the number of QTT parameters. 
The quasi-interpolation operator from $v$ to its representation on
the virtual mesh is introduced in Section \ref{sec:quasi-interpolation}.

For simplicity, we will consider here functions that have zero trace on the
part of the boundary not abutting at the origin, i.e., 
on
\begin{equation*}
\Gamma 
= 
\left\{ (x_1,x_2,x_3)\in \partial Q:\, x_1x_2x_3 \neq 0\right\}
\;.
\end{equation*}
We denote by $H^1_\Gamma(Q)$ the subspace of $H^1(Q)$ functions 
with zero trace on $\Gamma$.
We then fix $\gamma \in\mathbb{R}$ such that $\gamma-3/2\in (0,1)$, 
two constants $C_X$, $A_X>0$, and 
a regularity exponent $\gev\geq 1$ and 
denote by
\begin{equation*}
X=\cJ^{\varpi}_\gamma(Q; C_X, A_X, \gev) \cap H^1_\Gamma(Q)
\end{equation*}
the weighted space of Gevrey-$\gev$-regular 
functions with zero trace on $\Gamma$ that will be considered henceforth.
\subsection{Low order virtual FE space $\Xqttell$}
\label{sec:VirtUnifSpc}
We introduce the so-called ``virtual'' FE space discussed above. 
Here, it will consist of the space of continuous, 
piecewise trilinear functions
on a uniform mesh of axiparallel hexahedral elements 
of size $2^{-\ell}$ (so-called $\mathbb{Q}_1$-FEM), 
which we now introduce.
\subsubsection{Uniform background mesh $\cT^\ell$}
\label{sec:UnifGrd}
In $Q = (0,1)^3$,
we introduce the uniform mesh $\cT^\ell$ with 
nodes $\bx_{i,j,k}\in 2^{-\ell} {\mathbb N}_0^3 \cap \bar{Q}$,
for $(i,j,k) \in \{0, \dots, 2^\ell\}^3$. 
For a refinement level $\ell\in \mathbb{N}$, 
we write 
$ I^\ell_j = (2^{-\ell}j, 2^{-\ell}(j+1)) $, $j=0, \dots, 2^\ell-1$.
Then,
$\cT^\ell = \{ I^\ell_i\times I^\ell_j \times I^\ell_k: i,j,k = 0, \dots, 2^\ell-1 \}$.
\subsubsection{Virtual finite element space}
For $(i,j,k) \in \{0, \dots, 2^\ell-1\}^3$, 
we denote by $\phi_{i,j,k}$ the 
locally trilinear, continuous nodal Lagrange functions 
which satisfy
\begin{equation*}
  \phi_{i,j,k}(\bx_{m,n,p}) = \delta_{im}\delta_{jn}\delta_{kp},
\quad 
(i,j,k) \in \{0, \dots, 2^\ell-1\}^3, \, (m,n,p)\in \{0, \dots, 2^\ell\}^3
\end{equation*}
where $\delta_{im}$ denotes the Kronecker delta symbol 
for indices $i$ and $m$.

The space of continuous, locally trilinear Lagrange functions 
on the (background) mesh $\cT^\ell$ is 
\begin{equation*}
\Xqttell = \spn\{\phi_{i,j,k}: (i,j,k) \in \{0, \dots, 2^\ell-1\}^3\}.
\end{equation*}
Note that the basis functions 
$\phi_{i,j,k}$ and the space $\Xqttell$ are algebraic tensor
products of the corresponding univariate functions, resp. spaces.
We remark that for every $v\in \Xqttell$ holds $v_{|_\Gamma} = 0$.

  \begin{remark}
$\Xqttell$ contains functions that vanish on $\Gamma$. 
We limit ourselves to this case for simplicity of notation;
the extension of our analysis to functions with nonzero trace 
on $\Gamma$ involves additional technicalities.
We refer to \cite{Kazeev2018} for the two-dimensional case.
\end{remark}

\subsubsection{Lagrange interpolation operator $\cI^\ell$}
\label{sec:I}
We denote by 
$\cI^\ell$ the Lagrange interpolation operator on the uniform tensor mesh 
$\cT^\ell$. 
I.e., $\cI^\ell: C(\bar{Q}) \to \Xqttell$ is defined as 
\begin{equation*}
\left( \mathcal{I}^\ell v \right)(x) 
= 
\sum_{(i,j,k)\in \{0, \dots, 2^\ell-1\}^3}v(\bx_{i,j,k})\phi_{i,j,k}(x),
\; x\in \bar{Q}\;.
\end{equation*}
\subsubsection{Analysis and synthesis operators}
\label{sec:AnSyntOp}
For $\ell\in \mathbb{N}$, $\scrA^\ell : \Xqttell\to \mathbb{R}^{2^\ell\times
  2^\ell \times 2^\ell}$ and $\scrS^\ell : \mathbb{R}^{2^\ell\times
  2^\ell \times 2^\ell} \to \Xqttell $ are the analysis and synthesis operators, such that
\begin{equation} \label{eq:ansynth}
(\scrA^\ell v^\ell)_{i,j,k} 
=  
v^\ell(\bx_{i,j,k}), \qquad (\scrS^\ell\bv)(x) = \sum_{i,j,k = 0}^{2^\ell-1}\bv_{i,j,k}\phi_{i,j,k}(x).
\end{equation}
\subsection{Auxiliary $hp$ space}
\label{sec:hp}
We obtain the QTT-rank bounds on TT-formatted approximations
by comparison with $hp$-approximations.
To this end, we introduce the $hp$-FE spaces.
We start with \emph{$1$-irregular meshes of axiparallel hexahedra}
with geometric refinement towards the singularity of the function of
interest (``geometric meshes'' for short).
\subsubsection{Geometric mesh}
\label{sec:GeoMes}
Let $\ell \in \mathbb{N}$. For $i=0, \dots, \ell$, let
\begin{equation*}
  J_{1,i}^\ell = (2^{i - \ell - 1}, 2^{i-\ell})\qquad\text{and}\qquad J_{0,i}^\ell = (0, 2^{i -\ell}).
\end{equation*}
Then, for $k\in \{0, \dots, \ell\}$ and $a,b,c\in \{0,1\}$, define
\begin{equation*}
  K_{abc, k}^\ell = J_{a,k}^\ell\times J_{b,k}^\ell\times J_{c,k}^\ell
\end{equation*}
see Figure \ref{fig:elements}.
\begin{figure}
  \centering
  \includegraphics[width=.6\textwidth]{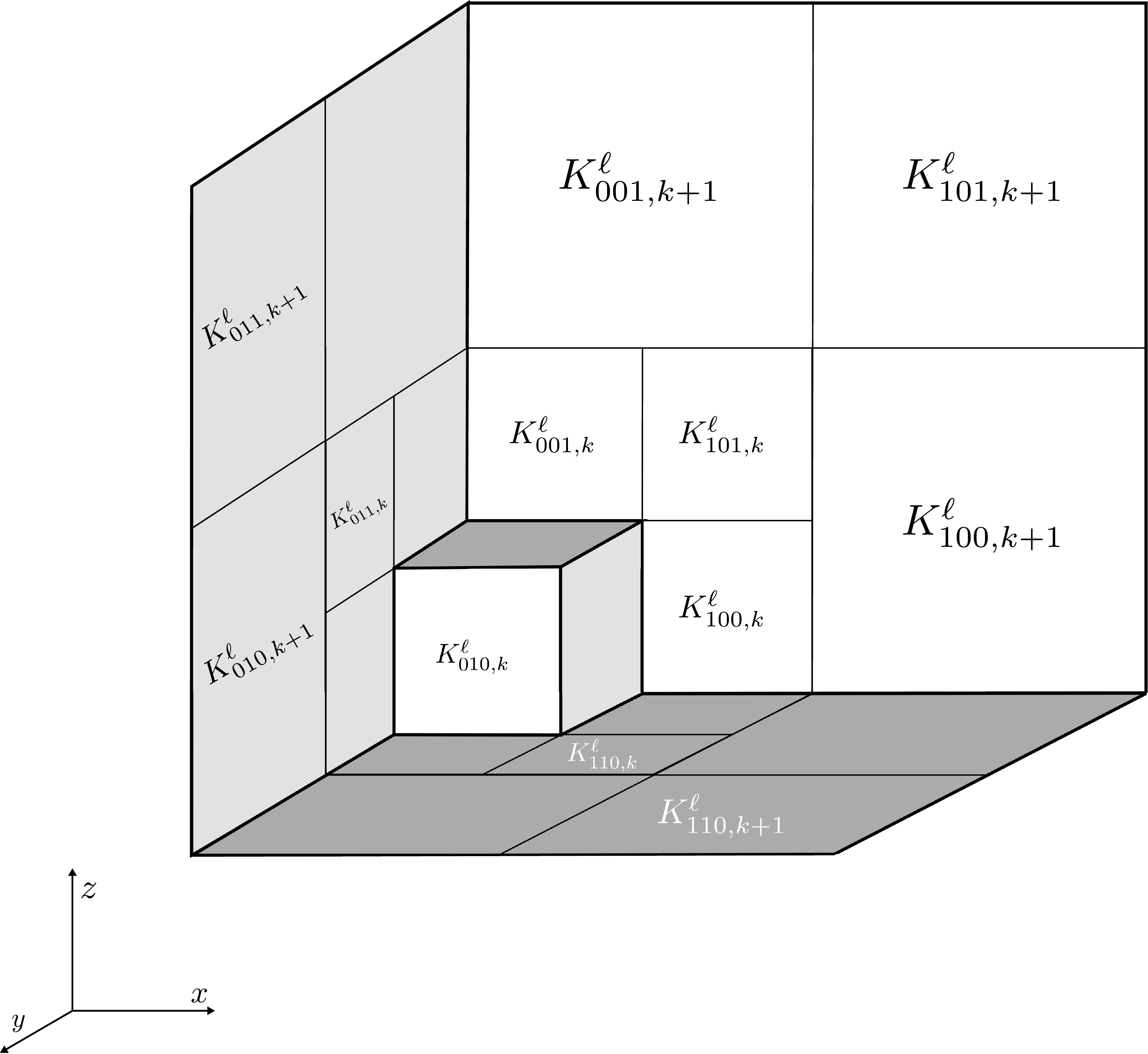}
  \caption{Elements $K_{n,j}^\ell$, for fixed $\ell$, for $j=k, k+1$, 
  and for $n\in \{001, \dots, 110\}$. 
  Element $K_{111}$ not visible in this projection.}
  \label{fig:elements}
\end{figure}
Denoting $\cN = \{001, \dots, 111\}$, the auxiliary geometric mesh 
is given by
\[
  \cG^\ell = \{ K^\ell_{n, k}, k\in \{1, \dots, \ell\}, n\in
  \cN\}\cup K^\ell_{000,0}.
\]
Element $K_{000, 0 }^\ell$ has one vertex coinciding with the origin.
We collect all elements at the same refinement level in 
\emph{mesh layers}
\begin{equation}
  \label{eq:mesh-level}
\cL^{\ell}_0 = \{K_{000, 0}\}, \qquad 
  \cL^\ell_j = \{K_{n,j}, \, n\in \cN\} \text{ for } j=1, \dots, \ell.
\end{equation}
We also introduce the one- and two-dimensional versions 
of the geometric mesh as
\begin{equation*}
  \cG^\ell_{\mathrm{2d}} 
  =  
  \{ K^\ell_{n, k}, k\in \{1, \dots, \ell\}, n\in \{01,10,11\}\}\cup K^\ell_{00,0},
\end{equation*}
where $K_{ab, k}^\ell = J_{a,k}^\ell\times J_{b, k}^\ell$,
and 
\begin{equation}
  \label{eq:hpmesh1d}
  \cG^\ell_{\mathrm{1d}} = \{ J^\ell_{1, k}, k\in \{1, \dots, \ell\}\}\cup J^\ell_{0,0},
\end{equation}
see Figure \ref{fig:G-12d}.
\begin{figure}
  \centering
  \begin{subfigure}{0.5\linewidth}
   \centering\includegraphics[width=.7\textwidth]{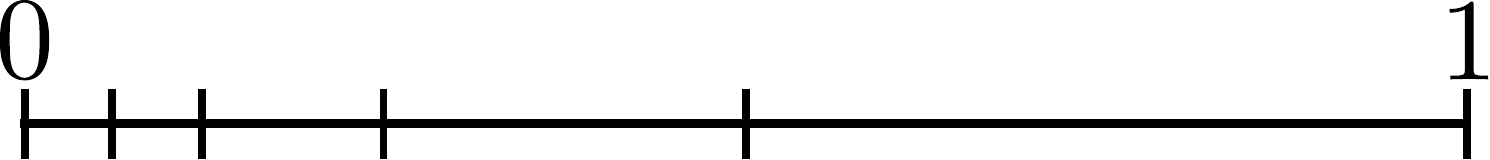}
  \end{subfigure}%
  \begin{subfigure}{0.5\linewidth}
   \centering\includegraphics[width=.7\textwidth]{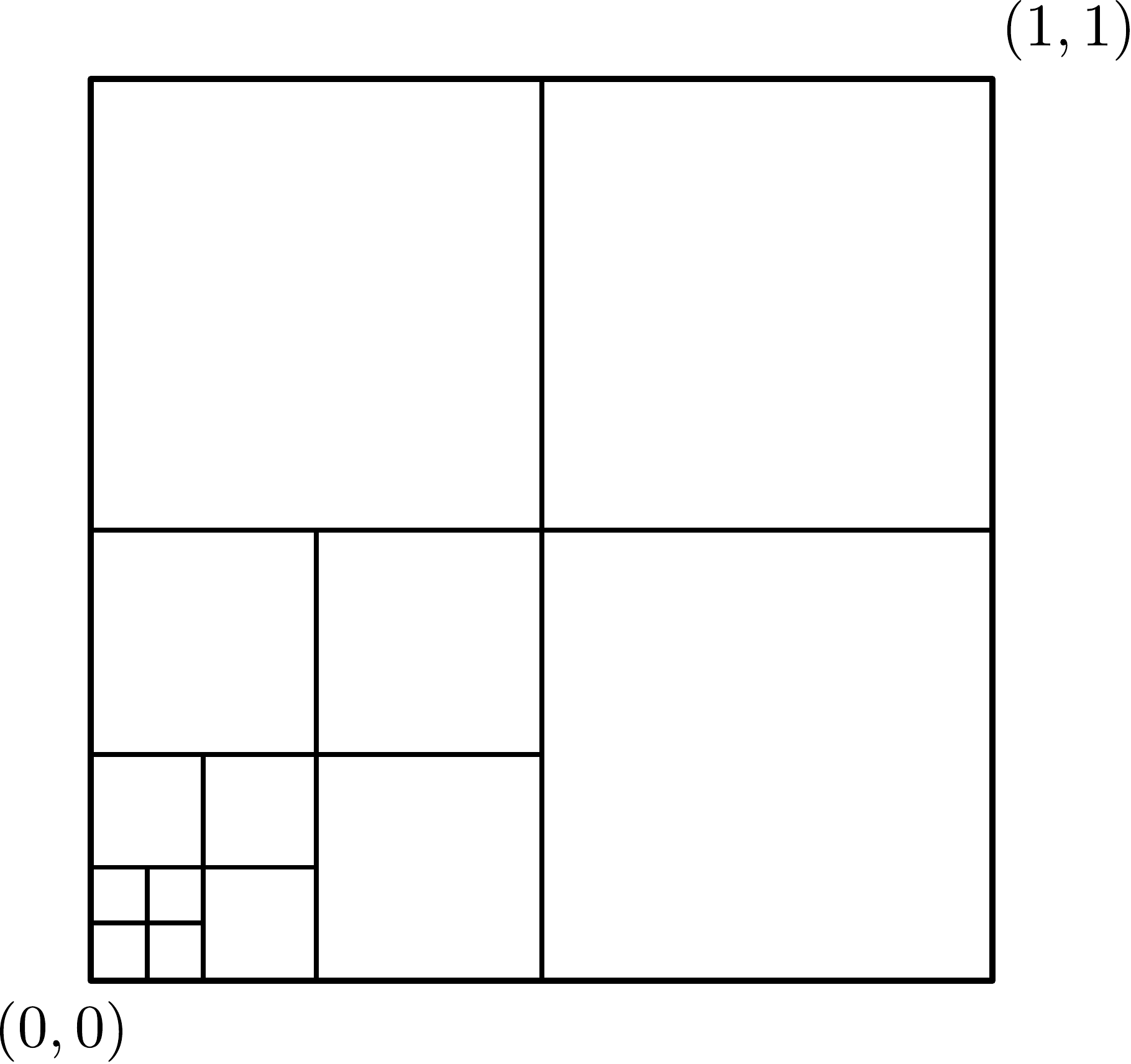}
  \end{subfigure}
  \caption{
    Univariate geometric mesh $\cG^\ell_{\mathrm{1d}}$ (left) and
    bivariate geometric mesh $\cG^\ell_{\mathrm{2d}}$ (right) with 
    subdivision ratio $1/2$.}
  \label{fig:G-12d}
\end{figure}
We remark that for $q\in \mathbb{N}$, $k = 1, \dots,  2^q-1$,  $j = 1, \dots,  q-1$,
and for all integer $\ell>q$
\begin{subequations}
\begin{equation*}
  I^q_k \subset J^q_{1,\lfloor \log_2 k \rfloor+1} \qquad J^q_{1,j}   = J^\ell_{1, \ell-q+j} \;.
\end{equation*}
Furthermore,
for $p = 1, \dots, q-1$ and all $m = p+1, \dots, q-1$, there holds
\begin{equation*}
  J^q_{1,p} \subset J^q_{0, m}.
\end{equation*}
\end{subequations}
\subsubsection{$hp$ space}
\label{sec:hpSpc}
The $hp$ space is formally introduced as 
\begin{multline}
  \label{eq:hp-def}
  \Xhpell  = \{ v\in H^1(Q): v_{|_{K^\ell_{n, j}}} \in \mathbb{Q}_{p}(K^\ell_{n, j}), \\
                \text{ for all } n\in \cN, j = 1, \dots, \ell\text{ and }n=000, j=0\}.
\end{multline}
Note that, as a consequence of the existence of a continuous $hp$
approximation to functions in $\cJ^\infty_\gamma(Q)$ 
proved in Appendix \ref{sec:gevrey} and in \cite{Schotzau2015}, 
the space $\Xhpell$ is well-defined by \eqref{eq:hp-def}.
\subsubsection{$hp$ approximation}
\label{sec:Pihp}
We provide a brief presentation of (novel) 
$hp$-interpolation error bounds
which are exponential in the number of degree of freedom
for functions in the Gevrey-type classes 
$\cK^{\varpi,q}_\gamma(Q; C, A, \gev)$, 
$\cJ^{\varpi, q}_\gamma(Q; C, A, \gev)$ 
defined in Section~\ref{sec:GevAnFct}.
We consider here \emph{axiparallel, geometric
partitions} of $Q = (0,1)^3$ into hexahedral elements; this entails,
of course, irregular nodes and faces so that $hp$-interpolants are 
to be constructed in a two-stage process: first, an elementwise 
$hp$-(quasi)interpolant with analytic error bounds and second,
\emph{polynomial face jump liftings} which preserve the analytic
bounds. We refer to the appendix and to e.g. \cite{Schotzau2015}
for details on this.

In the analytic case, i.e., when $\gev = 1$
such exponential error bounds are well-known (e.g. \cite{Schotzau2015}).
However, for $\gev>1$, these bounds are novel; 
for regular geometric meshes of tetrahedra, corresponding
bounds have recently been established in \cite{Feischl2018}. 

We introduce in Appendix \ref{sec:gevrey} the projector $\Pihpell :
 \cJ^\infty_\gamma(Q)\to \Xhpell$, defined for $\gamma > 3/2$.
We recall here that given a function $v\in \cJ^{\infty}_\gamma(Q)$, for $\gamma>3/2$, then $\Pihpell
v \in H^1(Q)\cap C(\bar{Q})$, i.e., the projector is \emph{conforming} in $H^1(Q)$.
Furthermore for all $u\in \cJ^{\varpi}_\gamma(Q; C, A, \gev)$, with
$\gamma>3/2$, there exist $p\simeq \ell^\gev$, and positive
$C_{\mathsf{hp}}$, $b_{\mathsf{hp}}$ (depending on $C, A$, $\gev$) 
such that for every $\ell\in \mathbb{N}$ holds
\begin{equation}
  \label{eq:hp-error-text}
  \| u - \Pihpell u \|_{H^1(Q)} \leq C_{\mathsf{hp}} \exp(-b_{\mathsf{hp}}\ell),
\end{equation}
 and $\dim(\Xhpell)\simeq \ell p^3 \simeq \ell^{3\gev+1}$,
see Proposition \ref{prop:hp-error} in the appendix. 
\subsection{Quasi interpolation operator $\fP^\ell$}
\label{sec:quasi-interpolation}
We recall that
  \begin{equation*}
X=\left\{ v\in\cJ^{\varpi}_\gamma(Q; C_X, A_X, \gev) : v_{|_\Gamma} = 0\right\},
\end{equation*}
where 
$ \Gamma = \left\{ (x_1,x_2,x_3)\in \partial Q:\, x_1 x_2 x_3 \neq 0 \right\}$ 
and for $\gamma>3/2$, positive constants $C_X$ and $A_X$, and Gevrey exponent $\gev\geq 1$.
We also fix $p\simeq \ell^\gev$ such that \eqref{eq:hp-error-text} holds 
and define the quasi-interpolation operator $\fP^\ell: X\to \Xqttell$ as
\begin{equation} \label{eq:vqtt}
   \fP^\ell u = \cI^\ell \Pihpell u.
\end{equation}
\section{Quasi interpolation error}
\label{sec:interp-error}
We give here (specifically, in Proposition \ref{prop:hp-reinterp}) 
an estimate on the error introduced by the quasi-interpolation 
operator $\fP^\ell$ defined in Section \ref{sec:quasi-interpolation}. 
We start by estimating, in the following lemma, 
the error introduced by interpolating the $hp$ projection of a function in $X$.
\begin{lemma}
  \label{lemma:interp-error}
  Let $\ell \in \mathbb{N}$, $u\in X$, $\cI^\ell : C(\bar{Q})\to \Xqttell$ and $\Pihpell :
 X\to \Xhpell$ defined in Sections \ref{sec:I} and \ref{sec:Pihp}, respectively. 
 Then there exist constants $C, b_{\mathcal{I}} >0$ such that
 \begin{equation*}
  \|\left( \Id - \cI^\ell  \right)\Pihpell u \|_{H^1(Q)} \leq C \exp(-b_{\mathcal{I}}\ell).
  \end{equation*}
\end{lemma}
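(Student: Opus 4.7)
The plan is to decompose the error cell-by-cell along the $hp$ mesh $\cG^\ell$, which is a subpartition of the uniform mesh $\cT^\ell$ element-by-element, writing
\begin{equation*}
\|(\Id-\cI^\ell)\Pihpell u\|_{H^1(Q)}^2
=
\|(\Id-\cI^\ell)\Pihpell u\|_{H^1(K^\ell_{000,0})}^2
+
\sum_{k=1}^{\ell}\sum_{K\in\cL^\ell_k}\|(\Id-\cI^\ell)\Pihpell u\|_{H^1(K)}^2.
\end{equation*}
The innermost $hp$-element $K^\ell_{000,0}$ coincides with a single cell of $\cT^\ell$ of side $h_\ell=2^{-\ell}$ and must be handled separately from the remaining elements $K\in\cL^\ell_k$ with $k\geq 1$, each of side $h_K\simeq 2^{k-\ell}$ and each containing of order $8^k$ uniform subcells.

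On a layer-$k$ element $K$ with $k\geq 1$, I would first apply the standard $\mathbb{Q}_1$ interpolation estimate $\|(\Id-\cI^\ell)w\|_{H^1(T)}\leq C h_\ell |w|_{H^2(T)}$ on each uniform subcell $T\subset K$ and sum to obtain $\|(\Id-\cI^\ell)\Pihpell u\|_{H^1(K)}\leq C h_\ell |\Pihpell u|_{H^2(K)}$. Since $\Pihpell u$ restricted to $K$ is a polynomial of degree $p\simeq \ell^\gev$, a $p$-inverse inequality reduces this to $\|(\Id-\cI^\ell)\Pihpell u\|_{H^1(K)}\leq C\,p^2\,2^{-k}\,|\Pihpell u|_{H^1(K)}$. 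Splitting the seminorm by the triangle inequality into $|u|_{H^1(K)}$, which the Kondrat'ev weight controls by $\|r^{\gamma-1}\|_{L^\infty(K)}|u|_{\cK^1_\gamma(K)}\leq C\,2^{(k-\ell)(\gamma-1)}|u|_{\cK^1_\gamma(K)}$, plus an $hp$-error bounded by \eqref{eq:hp-error-text}, then squaring and summing over $k$ and over $K\in\cL^\ell_k$, the weighted contribution becomes a geometric series in $k$ with overall prefactor $4^{-\beta\ell}$ for $\beta=\min(\gamma-1,1)>0$, which even after multiplication by $p^4\simeq\ell^{4\gev}$ remains exponentially small in $\ell$.

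For the innermost element $K^\ell_{000,0}$, where the inverse-estimate route fails because the layer factor $2^{-k}$ equals $1$, I plan instead to use the direct triangle inequality
\begin{equation*}
\|(\Id-\cI^\ell)\Pihpell u\|_{H^1(K^\ell_{000,0})}
\leq \|\Pihpell u\|_{H^1(K^\ell_{000,0})}
+\|\cI^\ell\Pihpell u\|_{H^1(K^\ell_{000,0})}.
\end{equation*}
The first summand I decompose into $|u|_{H^1(K^\ell_{000,0})}\leq C\,2^{-(\gamma-1)\ell}|u|_{\cK^1_\gamma(Q)}$ (the same weighted estimate at $k=0$), an $L^2$-part controlled by $|K^\ell_{000,0}|^{1/2}=2^{-3\ell/2}$ and by the $L^\infty$-embedding available for $\cJ^\varpi_\gamma(Q)$ with $\gamma>3/2$, plus the global $hp$-error. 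For the second, the $L^\infty$-stability of trilinear nodal interpolation combined with a uniform $L^\infty$-bound on $\Pihpell u$ and the $\mathbb{Q}_1$-inverse estimate yield $|\cI^\ell\Pihpell u|_{H^1(K^\ell_{000,0})}\leq C\,h_\ell^{-1}\|\cI^\ell\Pihpell u\|_{L^2(K^\ell_{000,0})}\leq C\,2^{-\ell/2}$.

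The main obstacle is exactly this innermost cell: the elementwise interpolation estimate alone yields an $\mathcal{O}(p^2)$ factor with no intrinsic $h_\ell$-decay there, so the exponential smallness has to be extracted entirely from the decay of $u$ near the origin---through the weighted $\cK^1_\gamma$ norm, the $L^\infty$-embedding of $\cJ^\varpi_\gamma$, and the shrinking cell volume $2^{-3\ell}$---and must be balanced carefully against the polynomial-in-$p=\ell^\gev$ blow-ups introduced by the various inverse estimates, in order to produce a final rate of the claimed form $C\exp(-b_{\cI}\ell)$.
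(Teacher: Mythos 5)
Your treatment of the layers $\cL^\ell_k$ with $k\geq 1$ is exactly the paper's argument: elementwise $\mathbb{Q}_1$ interpolation estimate, polynomial inverse inequality trading $|\cdot|_{H^2(K)}$ for $p^2h_K^{-1}|\cdot|_{H^1(K)}$, triangle inequality against $u$, the weighted bound $r^{\gamma-1}\lesssim 2^{(k-\ell)(\gamma-1)}$ on layer $k$, summation giving the rate $2^{-2\ell\min(\gamma-1,1)}$, and absorption of $p^4\simeq\ell^{4\gev}$ into the exponential. (The paper also records, via Remark \ref{remark:nullboundary}, that $(\Pihpell u)_{|_\Gamma}=0$, which is needed because $\cI^\ell$ omits the nodes on $\Gamma$; you should note this too.)

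Where you diverge is the terminal element $K^\ell_{000,0}$, and the divergence rests on a misdiagnosis: the inverse-estimate route does not fail there. At $k=0$ the layer factor $2^{-k}$ is indeed $1$, but the exponential smallness is supplied by the weighted estimate itself, $|u|_{H^1(K^\ell_{000,0})}\leq C\,2^{-\ell(\gamma-1)}\|u\|_{\cJ^1_\gamma(K^\ell_{000,0})}$ with $\gamma-1>1/2$; the paper therefore runs the identical chain for $j=0$. Note that $K^\ell_{000,0}$ is a single cell of $\cT^\ell$ on which $\Pihpell u$ is a polynomial of degree $p$, so both the cellwise interpolation estimate and the inverse inequality apply verbatim. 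Your replacement argument for that cell contains the one genuine gap of the proposal: the bound $\|\cI^\ell\Pihpell u\|_{H^1(K^\ell_{000,0})}\leq C2^{-\ell/2}$ invokes a \emph{uniform} (in $\ell$ and $p$) $L^\infty$ bound on $\Pihpell u$ near the origin. That bound is not available off the shelf: on $K^\ell_{000,0}$ the projector is the constant $u(0)$ \emph{plus} the nodal/edge/face liftings $\fL$, which the paper controls only in $H^1$ with $p$-dependent constants; converting that to $L^\infty$ by a polynomial inverse estimate costs a factor $p^3h_K^{-3/2}\simeq p^32^{3\ell/2}$, which destroys your $2^{-\ell/2}$. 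The simplest repair is to discard the special-case argument and treat $K^\ell_{000,0}$ exactly like the other elements.
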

\begin{proof}
  There holds 
  \begin{equation}
    \label{eq:interp-error1}
    \| \left( \Id - \cI^\ell  \right)\Pihpell u \|_{H^1(Q)} ^2
    =
    \sum_{j=0}^\ell \sum_{K \in \cL^\ell_j}
    \| \left( \Id - \cI^\ell  \right)\Pihpell u \|_{H^1(K)} ^2,
  \end{equation}
where the mesh layers $\cL^\ell_j$ are defined in \eqref{eq:mesh-level}.
  The quantity on the right hand side of this equation 
  is an upper bound for the error of interpolation 
  over a uniform mesh of axiparallel cubes of edge length $2^{-\ell}$.
  The axiparallel cubes $K^\ell_{n,j}$ are affine equivalent 
  to the reference element $\hK = (-1,1)^3$. 
  Furthermore, since $u\in X$, by Remark \ref{remark:nullboundary} in the Appendix 
  there holds $\left( \Pihpell u \right)_{|_\Gamma} =0$.
  Hence there exists $C>0$ such that, for all $(n,j)\in \cN\times \{1, \dots, \ell\} \cup (000, 0)$ 
  and for all $\ell$
  \begin{equation*}
    \| \left( \Id - \cI^\ell  \right)\Pihpell u \|_{H^1(K^\ell_{n,j})}
    \leq
    C 2^{-\ell}|\Pihpell u|_{H^2(K^\ell_{n,j})}.
  \end{equation*}
  By the polynomial inverse inequality
  \begin{equation*}
    |v |_{H^2(K)} \leq C \frac{p^2}{h_K} |v|_{H^1(K)},
  \end{equation*}
where $v$ is a polynomial of degree $p$ and $h_K$ is the diameter of $K$
  (see, e.g.  \cite{Schwabphp98,Georgoulis2008}), 
recalling that an element 
$K^\ell_{n,j}\in \cL^\ell_j$ is an axiparallel cube with 
diameter $h_j\simeq 2^{ - \ell +j}$) and using a triangle inequality,
there exists a constant $C>0$ independent of $\ell$  and of $p$
such that 
  \begin{align*}
    2^{-\ell}|\Pihpell u|_{H^2(K^\ell_{n,j})}
    &
      \leq
      C 2^{-\ell} h_j^{-1}p^2|\Pihpell u|_{H^1(K^\ell_{n,j})}
      \\
    &\leq
    C 2^{-j}p^2|\Pihpell u|_{H^1(K^\ell_{n,j})}\\
    &\leq
    C 2^{-j}p^2\left(|u|_{H^1(K^\ell_{n,j})}  + | u - \Pihpell u|_{H^1(K^\ell_{n,j})}\right) .
  \end{align*}
  Since $\gamma> 1$,
  there exists a uniform constant $C$ such that, on each $K^\ell_{n,j}$, 
  \[2^{(\ell - j)(\gamma-1)} \leq C r_{|_{K^\ell_{n,j}}}^{1-\gamma}.
\] 
From this last inequality and \eqref{eq:Jspace}
  \begin{equation}
    \label{eq:interp-error4}
2^{-j}|u|_{H^1(K^\ell_{n,j})}
    \leq
    C 2^{-\ell(\gamma - 1) - j(2-\gamma)}\|u\|_{\cJ^1 _\gamma(K^\ell_{n,j})}. 
  \end{equation}
Combining equations \eqref{eq:interp-error1} to \eqref{eq:interp-error4}, 
there exists a constant $C>0$ such that for all $\ell$ holds
  \begin{align*}
    &\| \left( \Id - \cI^\ell  \right)\Pihpell u \|_{H^1(Q)}^2
       \\ 
    & \quad
      \leq
C \sum_{j=0}^\ell{\sum_{K\in \cL^\ell_j}} 2^{-2j}p^4
                    \left(|u|_{H^1(K)}  + | u - \Pihpell u|_{H^1(K)}\right)^2 
       \\ 
    & \quad
 \leq C p^4\left(\sum_{j=0}^\ell \sum_{K\in \cL^\ell_j} 
          2^{-2\ell(\gamma - 1) - 2j(2-\gamma)}\|u\|^2_{\cJ^1 _\gamma(K)} 
      +  | u - \Pihpell u|_{H^1(Q)}^2 \right) .
  \end{align*}
  Then, by \eqref{eq:hp-error-text} and since $p\simeq \ell^\gev$,
  \begin{align*}
  \| \left( \Id - \cI^\ell  \right)\Pihpell u \|_{H^1(Q)}^2
    &\leq
C \ell^{4\gev}\left( 2^{-2\ell\min(\gamma-1, 1)}\|u\|^2_{\cJ^1 _\gamma(Q)}  
  + C_{\mathsf{hp}}^2e^{-2b_{\mathsf{hp}}\ell}\right).
\end{align*}
Absorbing the terms algebraic in $\ell$ into the exponential by a change of constant
concludes the proof.\myqed
\end{proof}
\begin{proposition}
  \label{prop:hp-reinterp}
  Let $0<\epsilon_0 < 1$ and $u\in X$
  . Then for all $0 < \epsilon \leq \epsilon_0$ there exists
$\ell\in \mathbb{N}$ such that
\begin{equation*}
  \| u - \fP^\ell u\|_{H^1(Q)} \leq \epsilon,
  \end{equation*}
  where $\fP^\ell u \in \Xqttell$ is defined in \eqref{eq:vqtt} and there exists $C>0$
  independent of $\epsilon$ such that
  \begin{equation*}
    \ell \leq C |\log\epsilon|.
  \end{equation*}
\end{proposition}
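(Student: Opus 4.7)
The plan is to combine the two exponential-convergence estimates that have already been established, namely the $hp$-approximation bound \eqref{eq:hp-error-text} and the re-interpolation bound from Lemma \ref{lemma:interp-error}, through a single triangle inequality. Since $\fP^\ell u = \cI^\ell \Pihpell u$, I would first write
\begin{equation*}
\| u - \fP^\ell u\|_{H^1(Q)} \leq \| u - \Pihpell u\|_{H^1(Q)} + \|(\Id - \cI^\ell)\Pihpell u\|_{H^1(Q)}.
\end{equation*}

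For the first summand, I would invoke \eqref{eq:hp-error-text} directly: because $u \in X \subset \cJ^\varpi_\gamma(Q;C_X,A_X,\gev)$ with $\gamma > 3/2$ and $p \simeq \ell^\gev$ is chosen as in Section \ref{sec:Pihp}, there exist $C_{\mathsf{hp}}, b_{\mathsf{hp}} > 0$ (depending only on $C_X, A_X, \gev, \gamma$) such that the first term is bounded by $C_{\mathsf{hp}}\exp(-b_{\mathsf{hp}}\ell)$. For the second summand, I would apply Lemma \ref{lemma:interp-error}, which yields an upper bound of the form $C\exp(-b_{\mathcal{I}}\ell)$ for constants $C, b_{\mathcal{I}} > 0$ independent of $\ell$.

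Setting $b = \min(b_{\mathsf{hp}}, b_{\mathcal{I}}) > 0$ and $\widetilde{C} = C_{\mathsf{hp}} + C$, I obtain
\begin{equation*}
\| u - \fP^\ell u\|_{H^1(Q)} \leq \widetilde{C} \exp(-b\ell).
\end{equation*}
To enforce the right-hand side to be at most $\epsilon$, it suffices to choose $\ell = \lceil b^{-1} \log(\widetilde{C}/\epsilon) \rceil$. Since $\epsilon \leq \epsilon_0 < 1$, we have $|\log\epsilon| \geq |\log\epsilon_0| > 0$, so there exists $C > 0$ (depending on $\widetilde{C}, b, \epsilon_0$ but not on $\epsilon$) such that this choice satisfies $\ell \leq C|\log\epsilon|$, which is the required bound.

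There is no real obstacle here: all the heavy lifting is in Lemma \ref{lemma:interp-error} and the $hp$-approximation estimate \eqref{eq:hp-error-text} (itself proved in the appendix). The only minor point to handle with care is the absorption of the constant $\log\widetilde{C}$ into a multiple of $|\log\epsilon|$, which is legitimate precisely because of the restriction $\epsilon \leq \epsilon_0 < 1$ that bounds $|\log\epsilon|$ away from zero.
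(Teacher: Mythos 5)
Your proof is correct and follows exactly the paper's argument: the same triangle inequality splitting $u - \fP^\ell u$ into the $hp$-error and the re-interpolation error, controlled by \eqref{eq:hp-error-text} and Lemma \ref{lemma:interp-error} respectively, followed by the same choice $\ell = \lceil b^{-1}\log(\widetilde{C}/\epsilon)\rceil$. Your extra remark on absorbing $\log\widetilde{C}$ using $\epsilon \leq \epsilon_0 < 1$ is a welcome clarification of what the paper compresses into ``adjusting the value of $C$.''
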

\begin{proof}
  By a triangle inequality, Lemma \ref{lemma:interp-error}, and Proposition
  \ref{prop:hp-error} (recalled above in equation \eqref{eq:hp-error-text}),
  \begin{equation*}
    \| u - \fP^\ell u \|_{H^1(Q)}
    \leq 
    \| u - \Pihpell u \|_{H^1(Q)}
    + \| (\Id - \cI^\ell)\Pihpell u \|_{H^1(Q)} \leq C\exp(-b\ell),
  \end{equation*}
  where $C$ and $b$ are independent of $\ell$.
  The choice $\ell =  \lceil b^{-1}\log\left(\frac{C}{\epsilon}\right)\rceil$
  and adjusting the value of $C$ concludes the proof.
  \myqed
\end{proof}
\section{QTT formatted approximation of $u \in \cJ^\varpi_\gamma(Q)$}
\label{sec:analytic-approximation}
We now state and prove our main results. 
For Gevrey-regular functions in $Q=(0,1)^3$ with point singularity at the
origin, and for each of the three tensor formats (QTT, QT3, TQTT), 
we prove bounds on the ranks  which are sufficient to achieve 
a prescribed approximation accuracy $\varepsilon \in  (0,1)$ in the norm $H^1(Q)$.
This is the relevant norm for linear, second order, elliptic PDEs.
\subsection{Tensor Rank Bounds for QTT Approximation}
\label{sec:TRkBdsQTT}
\begin{lemma}
  \label{lemma:qtt-rank}
Let $0<\epsilon_0< 1$ and $u\in X$. 
Then, for all $0 < \epsilon\leq \epsilon_0$ there exist
$\ell\in\mathbb{N}$ and $\vqttell = \fP^\ell u$ 
such that 
$\| u - \vqttell\|_{H^1(Q)} \leq \epsilon$
and $\vqttell$ admits a QTT formatted representation with
\begin{equation*}
    \Ndof \leq C |\log\epsilon|^{6\gev+1}
\end{equation*}
degrees of freedom, for $C>0$ independent of $\epsilon$. 
\end{lemma}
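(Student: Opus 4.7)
The plan is to combine Proposition \ref{prop:hp-reinterp} with a QTT rank analysis of the coefficient tensor $\scrA^\ell \vqttell$, and then to read off $\Ndof$ from the classic-format formula in \eqref{eq:Ndof-TT}. The argument naturally splits into an approximation step, a rank-bound step, and a parameter-count step.

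First, I would invoke Proposition \ref{prop:hp-reinterp} to select $\ell \leq C|\log \epsilon|$ such that $\vqttell := \fP^\ell u = \cI^\ell \Pihpell u$ satisfies $\|u - \vqttell\|_{H^1(Q)} \leq \epsilon$. By the construction in Section \ref{sec:quasi-interpolation}, the associated $hp$ polynomial degree is $p \simeq \ell^\gev$.

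Next, I would bound the QTT ranks of $\scrA^\ell \vqttell$ by $\mathcal{O}(p^3)$, \emph{independently of} $\ell$. The structural input is that $\Pihpell u$ is, on every element of the geometric mesh $\cG^\ell$, a tensor-product polynomial of degree $p$, and that $\cT^\ell$ is a conforming dyadic refinement of $\cG^\ell$, so $\vqttell$ inherits this piecewise tensor-product structure on every $\cG^\ell$-cell. To control the ranks of the $3\ell-1$ unfoldings of the classic QTT in Definition \ref{def:QTT}, I would treat separately two kinds of cuts. For unfoldings that cut within a single physical direction (between bit levels of, say, dimension $1$), the nested geometric refinement of \cite{Kazeev2018} gives a rank bound of $\mathcal{O}(p)$ in the cut direction, which tensorizes with factors $\mathcal{O}(p^2)$ coming from the untouched two directions, yielding a total bound of $\mathcal{O}(p^3)$. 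For the two transitional unfoldings between physical directions (e.g.\ after the last bit of dimension $1$), the rank equals the dimension of the space of $1$-D slices of $\vqttell$ in the cut direction; this space consists of piecewise polynomials of degree $p$ on $\cG^\ell_{\mathrm{1d}}$, whose relevant rank is again $\mathcal{O}(p)$ by the one-dimensional nested-basis argument, and a tensor-product argument for the other two directions again gives the bound $\mathcal{O}(p^3)$. Substituting $r_\qtt = \mathcal{O}(p^3) = \mathcal{O}(\ell^{3\gev})$ and $\ell \leq C|\log \epsilon|$ into the classic-QTT entry of \eqref{eq:Ndof-TT} then produces $\Ndof = \mathcal{O}(\ell\, r_\qtt^2) = \mathcal{O}(|\log \epsilon|^{6\gev+1})$.

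The hard part is the $\ell$-independent rank bound $\mathcal{O}(p^3)$. The underlying reason is that the geometric refinement supplies a nested, multilevel polynomial basis, so even though $\dim(\Xhpell) \simeq \ell p^3$, the ranks of the quantized unfoldings only ``see'' the local polynomial degree and not the number of mesh layers. Making this rigorous in three dimensions requires careful bookkeeping of the $1$-irregular structure of $\cG^\ell$ (hanging faces and edges), the $H^1$-continuity of $\Pihpell u$ across interfaces, and the exact matching between the dyadic levels of $\cT^\ell$ and of $\cG^\ell$; once this is in place, the parameter count reduces to a direct application of \eqref{eq:Ndof-TT}.
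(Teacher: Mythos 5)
Your overall strategy (select $\ell\lesssim|\log\epsilon|$ via Proposition \ref{prop:hp-reinterp}, bound the ranks of the unfoldings of $\scrA^\ell\vqttell$ using the piecewise-polynomial structure of $\Pihpell u$, then count parameters via \eqref{eq:Ndof-TT}) coincides with the paper's. The gap is in the rank-bound step, which you yourself flag as the hard part. The claim that the unfolding ranks are $\mathcal{O}(p^3)$ \emph{independently of $\ell$} because they ``only see the local polynomial degree and not the number of mesh layers'' is false, and the mechanism you propose does not produce it. For a cut after bit $q$ of the first physical direction, the rank of the unfolding $V^{(q)}$ is the dimension of the span of its columns; each column is the trace of $\vqttell$ on $2^q$ points spaced $2^{-q}$ apart along a line parallel to the $x_1$-axis, i.e., a sample of a piecewise polynomial of degree $p$ on the univariate geometric mesh $\cG^q_{\mathrm{1d}}$, which has $\Theta(q)$ elements. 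The dimension of that trace space is $\Theta(qp)$, not $\mathcal{O}(p)$: the polynomial pieces on distinct geometric layers are independent, and no nestedness collapses this to the local degree. Likewise the ``tensorization with factors $\mathcal{O}(p^2)$ from the untouched two directions'' misreads how unfolding ranks work: for these cuts the untouched directions belong to the \emph{column index} and merely enumerate which trace is taken; they contribute nothing multiplicative to the rank. The correct bounds, which the paper obtains by factoring each unfolding as $BW$ with $B$ a sample matrix of a basis of a lower-dimensional piecewise-polynomial trace space, are $r_q\le C\ell p$ for cuts within the first and third physical directions and $r_q\le C\ell p^2$ for cuts within the second (two-dimensional slices meshed by $\cG^\ell_{\mathrm{2d}}$).

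Your final numbers survive only by accident: with $p\simeq\ell^\gev$ and $\gev\ge1$ one has $\ell p^2=\ell^{2\gev+1}\le\ell^{3\gev}=p^3$, so $\mathcal{O}(p^3)$ happens to dominate the true ranks, and $\Ndof=\mathcal{O}(\ell p^6)=\mathcal{O}(|\log\epsilon|^{6\gev+1})$ is numerically consistent with the statement (the paper's proof actually yields the sharper $\mathcal{O}(\ell^{4\gev+3})$ used in Theorem \ref{th:QTT-analytic}). A bound reached through an incorrect rank mechanism is not a proof, however. To repair the argument, replace the $\ell$-independence claim by the column-space (resp.\ row-space) dimension counts above, and supply the two pieces of bookkeeping the paper needs: the sample points are offset by $2^{-\ell}\xi_2$ from the geometric breakpoints (handled by extending the polynomial to a slightly shifted interval), and on the element touching the origin the restriction of $\Pihpell u$ is not a single polynomial at the sampling resolution (handled by adding $1$ to the rank and imposing right-continuity on the discontinuous trace space).
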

\begin{proof}
We consider the unfolding matrices 
$V^{(q)}$ of $\fT^{\qtt}(\scrA^\ell(\vqttell))$, 
with
$\fT^{\qtt}$ defined in \eqref{eq:QTT-tensorization} and $\scrA^\ell$ in \eqref{eq:ansynth}. 
We first consider the case $q\in \{1, \dots, \ell-1\}$. 
In this case, 
\begin{equation*}
V^{(q)}_{\xi_1, \overline{\xi_2\eta\zeta}} 
= 
\vqttell(x_{\overline{\xi_1\xi_2}, \eta, \zeta}),
\end{equation*}
for $\xi_1 = 0, \dots, 2^q-1$, $\xi_2 = 0, \dots, 2^{\ell-q}-1$, 
and for $\eta, \zeta = 0, \dots, 2^\ell-1$.
Now, introduce the reference line $S_1$ as
    $S_1 = (0, 1)\times \{0\}\times \{0\}$.
For each element $K\in \cG^q_{\mathrm{1d}}$, we denote its left and right
endpoints as $y_0^K$ and $y_1^K$, so that $(y^K_0, y^K_1) = K$. 
On $S_1$, we introduce a geometric mesh
  \begin{equation*}
    \cG_{S_1}^q = \left\{ K\times \{0\} \times \{0\},\ K\in \cG^q_{\mathrm{1d}}\right\}, 
  \end{equation*}
and the univariate discontinuous FE space 
\begin{multline*}
X^q_{S_1} 
= 
\bigg\{ v\in L^\infty(S_1): 
v_{|_{K}}\in \mathbb{Q}_{\pmax}(K)\text{ for all }K\in \cG^q_{S_1}
\\ 
\text{ and }v\text{ is right continuous at the nodes of }\cG^q_{\mathrm{1d}}\bigg\}.
\end{multline*}
We require the function to be right continuous at its discontinuity points,
i.e., for any two neighboring intervals $K_\sharp$
and 
$K_\flat$ with $y_0^{K_\flat} < y_1^{K_\flat} = y_0^{K_\sharp} < y_1^{K_\sharp}$ 
and a function $v\in X^q_{S_1}$ such that
\begin{equation*}
  v =
  \begin{cases}
    v_\flat &\text{in }K_\flat\\
    v_\sharp &\text{in }K_\sharp,
  \end{cases}
\end{equation*}
we have $v(y_1^{K_\flat}) = v(y_0^{K_\sharp}) = v_\sharp(y_0^{K_\sharp})$.
We also consider the affine transformation
\begin{equation*}
\phi_{\overline{ijk}} : (x_1,x_2,x_3)\mapsto (x_1 + 2^{-\ell}i, x_2 + 2^{-\ell}j, x_3 + 2^{-\ell}k),
\end{equation*}
so that, for all $\xi_1 = 0, \dots, 2^q-1$, $\xi_2 = 0, \dots, 2^{\ell-q}-1$, and $\eta,
\zeta = 0, \dots, 2^\ell-1$,
\begin{equation*}
    \phi_{\overline{\xi_2\eta\zeta}}^{-1}(x_{\overline{\xi_1\xi_2}, \eta, \zeta}) = (2^{-q}\xi_1, 0, 0).
\end{equation*}
Now,  for all $\xi_1\in\{0, \dots, 2^q-1\}$,
\begin{equation*}
     x_{\overline{\xi_1\xi_2}, \eta, \zeta}  \in \phi_{\overline{\xi_2\eta\zeta}}(S_1) =   \overline{I^q_{\xi_1}}\times \{ 2^{-\ell}\eta\}\times \{2^{-\ell}\zeta\}.
\end{equation*}
Then, for each $\xi_2\in \{0, \dots, 2^{\ell-q}-1\}$ and 
for each $\eta, \zeta\in \{0, \dots, 2^\ell-1\}$ 
there exists a piecewise polynomial $p_{\overline{\xi_2\eta\zeta}}\in X_{S_1}^{q}$ 
such that
\begin{equation*}
\begin{aligned}
\vqttell \left(x_{\overline{\xi_1\xi_2}, \eta, \zeta}\right) &=
\left( \vqttell \circ \phi_{\overline{\xi_2\eta\zeta}} \right)  \left( \phi^{-1}_{\overline{\xi_2\eta\zeta}}\left(x_{\overline{\xi_1\xi_2}, \eta, \zeta}\right)  \right)
\\
&= \left( \vqttell \circ \phi_{\overline{\xi_2\eta\zeta}} \right)  \left((2^{-q}\xi_1, 0, 0)\right) \\
&= 
p_{\overline{\xi_2\eta\zeta}}\left((2^{-q}\xi_1, 0, 0)\right) 
\end{aligned}
\end{equation*}
for all $\xi_1= 0, \dots, 2^{q}-1$.
The piecewise polynomial $p_{\overline{\xi_2\eta\zeta}}$ is constructed as follows.
For each $K = (y_0^K, y_1^K)\in \cG^q_{\mathrm{1d}}\setminus (0, 2^{-q})$ the function $\vqttell\circ
  \phi_{\overline{\xi_2\eta\zeta}}$ is a polynomial 
  of degree $p$ with the first variable in the interval $(y_0^K - 2^{-\ell\xi_2}, y_1^K -
  2^{-\ell\xi_2})$, therefore, \emph{a fortiori}, denoting $\widetilde{J}^K = (y_0^K, y_1^K - 2^{-\ell\xi_2})$
  \begin{equation*}
    \vqttell\circ \phi_{\overline{\xi_2\eta\zeta}} \in \mathbb{Q}_p(\widetilde{J}^K\times \{0\}\times \{0\}).
  \end{equation*}
   Hence, by extension, there exists a polynomial $p^K_{\overline{\xi_2\eta\zeta}}$
   such that $p^K_{\overline{\xi_2\eta\zeta}}\in
   \mathbb{Q}_p(K\times\{0\}\times\{0\})$ and
  \begin{equation*}
   p^K_{\overline{\xi_2\eta\zeta}} = \vqttell\circ \phi_{\overline{\xi_2\eta\zeta}} \text{ in }
   \widetilde{J}^K\times \{0\}\times \{0\}.
  \end{equation*}
  When $K=(0, 2^{-q})$, we let $p^K_{\overline{\xi_2\eta\zeta}}$ be any polynomial of degree $p$ satisfying $p^K_{\overline{\xi_2\eta\zeta}} (0,0,0)= (\vqttell\circ \phi_{\overline{\xi_2\eta\zeta}})(0,0,0)$.
  Finally, $p_{\overline{\xi_2\eta\zeta}} \in X^q_{S_1}$ is defined piecewise as
  $p^K_{\overline{\xi_2\eta\zeta}} $  in each element. Note that the property of
  right-continuity is crucial for the exactness of the piecewise polynomial at
  mesh nodes.

Remarking that there exists a constant $C>0$ such thath for every $\pmax, q$ holds
$\dim(X_{S_1}^\ell)\leq C q \pmax$ and taking a basis $\{e_n\}_n$ of $X_{S_1}^\ell$, 
we can write
\begin{equation*}
    V^{(q)} = BW, 
\end{equation*}
where $B_{\xi_1, n}= e_n((2^{-q}\xi_1, 0, 0))$, for
$n=1, \dots, \dim(X_{S_1}^\ell)$ and $\xi_1$ as above, and $W$
is a $\dim(X_{S_1}^\ell) \times 2^{3\ell-q}$ matrix of coefficients. 
Hence, there exists $\widetilde C>0$ such that for all $q=1, \dots, \ell$ it holds
\begin{equation*}
    r_q = \rank(V^{(q)}) \leq \dim(X_{S_1}^\ell)\leq \widetilde C  \ell^{\gev+1} .
\end{equation*}
We now consider the case where $\ell < q < 2\ell$ and denote $\tq = q-\ell$. 
Then, 
  \begin{equation*}
    V^{(q)}_{\overline{\xi\eta_1}, \overline{\eta_2\zeta}} 
    =
    \vqttell(x_{\xi, \overline{\eta_1\eta_2}, \zeta}),
  \end{equation*}
  for $\xi, \zeta = 0, \dots, 2^\ell-1$, $\eta_1 = 0, \dots, 2^{\tq}-1$, and
  $\eta_2 = 0, \dots, 2^{\ell-\tq}-1$. 
  We introduce the two-dimensional slice
  \begin{equation*}
    S_2^{\tq} = \{0\}\times(0, 2^{-\tq})\times(0,1),
  \end{equation*}
  with associated mesh
\begin{equation*}
  \cG^{\tq}_{S_2^{\tq}} 
  = 
  \{\{0\} \times K\text{ for all }K\in \cG^{\ell}_{\mathrm{2d}}\text{ such that } K \subset (0, 2^{-\tq})\times (0,1)\},
\end{equation*}
and the corresponding FE space
\begin{equation*}
  X^{\tq}_{S_2^{\tq}} 
  = 
  \{v\in H^1(S_2^{\tq}): v_{|_K} \in \mathbb{Q}_{\pmax}(K) \text{ for all }K\in \cG^{\tq}_{S_2^{\tq}}\}.
\end{equation*}
  Consider the affine transformations
\begin{equation*}
    \psi_{i, j} : (0, x_2, x_3) \mapsto ( 2^{-\ell}i, x_2+2^{-\tq}j, x_3).
\end{equation*}
  Then,
  $x_{\xi, \overline{\eta_1\eta_2}, \zeta} \in \psi_{\xi, \eta_1}(S^{\tq}_2)$.
  Moreover, for each $\xi, \eta_1$ there exists a
  piecewise polynomial $p_{\overline{\xi\eta_1}}\in X^{\tq}_{S_2}$ such that 
  \begin{equation*}
    \left( \vqttell \circ \psi_{\xi, \eta_1} \right)\left( \left(\psi_{\xi, \eta_1}\right)^{-1}(x_{\xi, \overline{\eta_1\eta_2}, \zeta})  \right)
    =
     p_{\overline{\xi\eta_1}}(x_{0, \eta_2, \zeta}) 
  \end{equation*}
     for all $\eta_2= 0, \dots, 2^{\ell-\tq}-1$ and $\zeta=0, \dots 2^\ell-1$,
  see Figure \ref{fig:slice2}.
  \begin{figure}
    \centering
    \includegraphics[width=.6\textwidth]{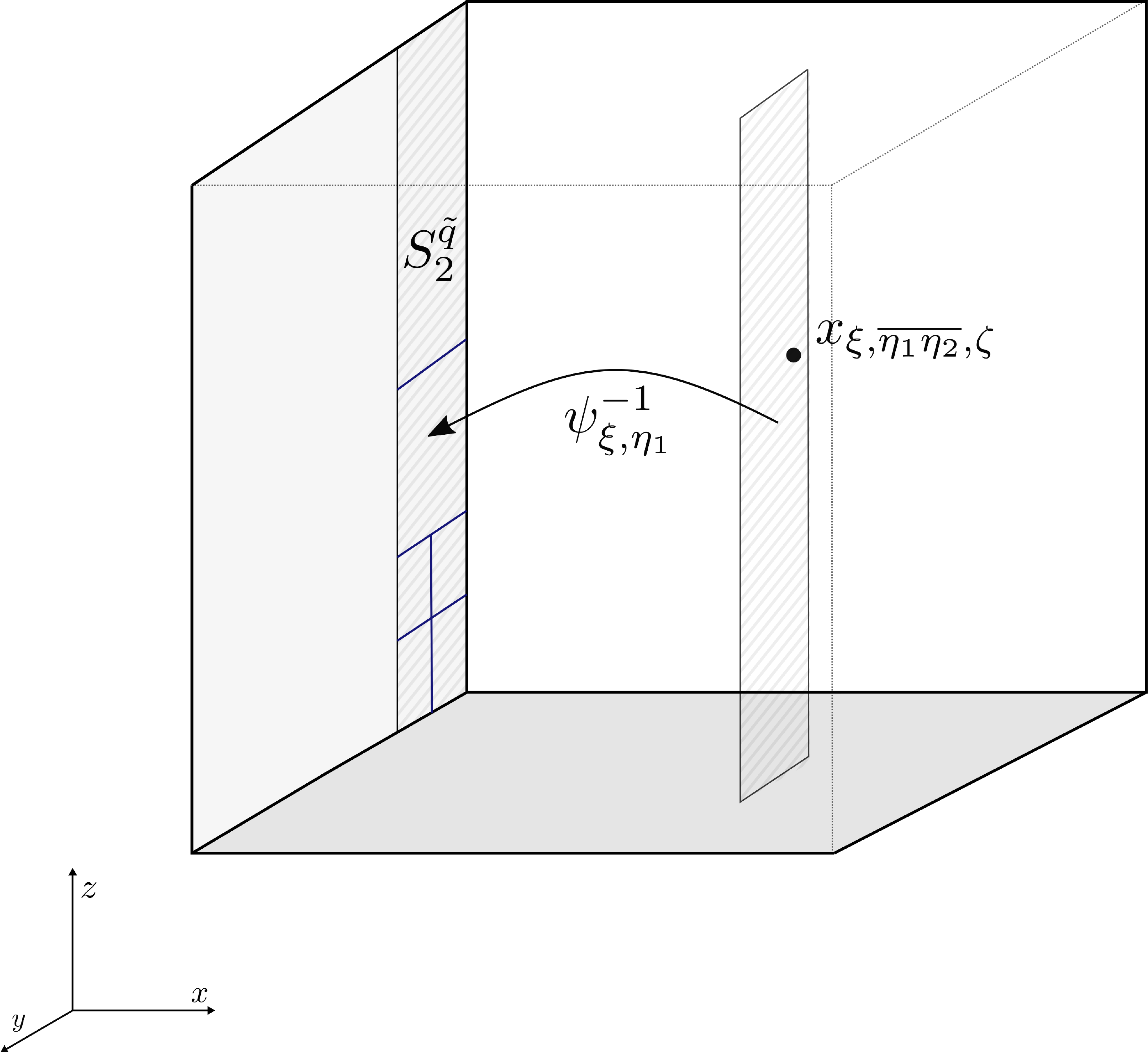}
    \caption{Slice $S_2^\tq$, with geometric mesh $\cG^{\tq}_{S_2}$ (in blue) 
    and action of domain mapping $\psi_{\xi, \eta_1}$.}
    \label{fig:slice2}
  \end{figure}
  Since $\dim (X^{\tq}_{S_2})\leq C\ell\pmax^2$, we obtain, reasoning as before,
  \begin{equation*}
    r_q = \rank(V^{(q)}) \leq C \ell p^2\leq C \ell^{2\gev+1} \qquad q = \ell+1,\dots, 2\ell-1.
  \end{equation*}

  It remains to consider $q$ with $2\ell\leq q< 3\ell$. 
  We sketch their treatment, which follows the same line of reasoning as in the preceding cases. 
  Every row of the
  unfolding matrix $V^{(q)}$ contains the evaluation of $\vqttell$ on 
  $2^{3\ell - q}$ equispaced 
  points belonging to a line parallel to the $z$-axis. 
  Hence, there exists
  a space of piecewise polynomials with less than $C (3\ell - q)\pmax$ degrees of
  freedom such that each row of $V^{(q)}$ can be written as linear combination of elements
  of the space, thus implying the existence of a constant $C>0$ such that
  \begin{equation*}
    r_q = \rank(V^{(q)}) \leq C \ell \pmax \leq  C \ell^{\delta+1} \qquad q = 2\ell,\dots, 3\ell-1.
  \end{equation*}
  The proof is concluded by remarking that
  \begin{equation*}
    \Ndof \leq C \sum_{q=1}^{3\ell-1} r_qr_{q+1} \leq C \ell^{4\gev +3},
  \end{equation*}
  choosing $\ell \simeq |\log\epsilon|$ and using Proposition \ref{prop:hp-reinterp}.
\myqed
\end{proof}

\subsection{Rank bounds for transposed order QTT representations}
\label{sec:RkBdTrOrdQTT}
\begin{lemma}
  \label{lemma:qt3-rank}
  Let $0<\epsilon_0 < 1$ and $u\in X$. 
  Then, for all $0 < \epsilon\leq \epsilon_0$ 
  there exists $\ell\in\mathbb{N}$ and $\vqtttell = \fP^\ell u$ 
  such that $\| u - \vqtttell\|_{H^1(Q)} \leq \epsilon$
   and $\vqtttell$ admits a transposed QTT representation with
  \begin{equation*}
    \Ndof \leq C |\log\epsilon|^{6\gev+1}
  \end{equation*}
  degrees of freedom, with $C>0$ independent of $\epsilon$. 
\end{lemma}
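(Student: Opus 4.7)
The plan is to follow the strategy of the proof of Lemma \ref{lemma:qtt-rank}, now adapted to the transposed tensorization $\fT^{\qttt}$ of \eqref{eq:transposedQTT-reshape}. Since the reshaped tensor has only $\ell$ mode-$8$ factors, there are only $\ell-1$ unfoldings $V^{(q)}$ to analyze. The $q$-th unfolding has entries
\[
V^{(q)}_{\overline{\xi_1\eta_1\zeta_1\cdots\xi_q\eta_q\zeta_q},\,\overline{\xi_{q+1}\eta_{q+1}\zeta_{q+1}\cdots\xi_\ell\eta_\ell\zeta_\ell}} = \vqtttell\big(x_{\overline{\xi_1\cdots\xi_\ell},\,\overline{\eta_1\cdots\eta_\ell},\,\overline{\zeta_1\cdots\zeta_\ell}}\big),
\]
so every row, indexed by a triple $(\xi,\eta,\zeta)\in\{0,\dots,2^q-1\}^3$, collects the values of $\vqtttell = \fP^\ell u$ on a uniform $(2^{\ell-q})^3$ grid inside the axiparallel sub-cube $C^q_{\xi\eta\zeta} = I^q_\xi \times I^q_\eta \times I^q_\zeta$ of side $2^{-q}$.

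I would introduce a reference cube $S_3 = (0,1)^3$, equip it with a three-dimensional geometric mesh $\cG^{\ell-q}_{S_3}$ of depth $\ell-q$ towards its origin corner (obtained by rescaling a copy of $\cG^{\ell-q}$), and consider the associated space $X^{\ell-q}_{S_3}$ of piecewise $\mathbb{Q}_p$ polynomials on $\cG^{\ell-q}_{S_3}$, prescribed to be right-continuous at mesh nodes in each direction, exactly as the space $X^q_{S_1}$ in the proof of Lemma \ref{lemma:qtt-rank}. This space has dimension $\dim X^{\ell-q}_{S_3} \leq C(\ell-q)\,p^3$. For each row $(\xi,\eta,\zeta)$, I would build an element $p^q_{\xi\eta\zeta}\in X^{\ell-q}_{S_3}$ whose values on the uniform $(2^{\ell-q})^3$ grid of $S_3$ match those of $\vqtttell\circ\phi_{\xi\eta\zeta}$, where $\phi_{\xi\eta\zeta}:S_3\to C^q_{\xi\eta\zeta}$ is the translation $(\hat x_1,\hat x_2,\hat x_3)\mapsto (2^{-q}\hat x_1 + 2^{-q}\xi, 2^{-q}\hat x_2 + 2^{-q}\eta, 2^{-q}\hat x_3 + 2^{-q}\zeta)$.

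The construction of $p^q_{\xi\eta\zeta}$ splits into two cases. In the \emph{corner case} $(\xi,\eta,\zeta)=(0,0,0)$, by self-similarity of the geometric refinement in $\cG^\ell$, the restriction of the $hp$ mesh to $C^q_{000}$ rescales exactly to $\cG^{\ell-q}_{S_3}$, so that $\Pihpell u$ pulled back by $\phi_{000}$ lies element-wise in $\mathbb{Q}_p$ on this mesh. In the \emph{off-corner case} $(\xi,\eta,\zeta)\neq(0,0,0)$, the $hp$ elements of $\cG^\ell$ intersecting $C^q_{\xi\eta\zeta}$ have diameter $\simeq 2^{-q}$, are $\mathcal{O}(1)$ in number, and after rescaling yield a function which is polynomial of degree $\leq p$ on a few elements of $S_3$, which in turn is contained in $X^{\ell-q}_{S_3}$ via the polynomial-extension argument used for $p^K_{\overline{\xi_2\eta\zeta}}$ in Lemma \ref{lemma:qtt-rank}; the prescribed right-continuity at mesh nodes ensures that evaluation at the uniform grid agrees with $\cI^\ell\Pihpell u$ exactly. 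Writing $V^{(q)} = BW$ where $B$ collects point-values of the basis of $X^{\ell-q}_{S_3}$ at the appropriate grid points then gives
\[
r_q = \rank V^{(q)} \leq \dim X^{\ell-q}_{S_3} \leq C(\ell-q)\,p^3 \leq C\,\ell^{3\gev+1}, \qquad q=1,\dots,\ell-1.
\]

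Inserting these rank bounds into the parameter-count formula \eqref{eq:Ndof-TT} for the transposed QTT format, combining with $p\simeq\ell^\gev$ from \eqref{eq:hp-error-text}, and setting $\ell\simeq|\log\epsilon|$ via Proposition \ref{prop:hp-reinterp} yields the claimed polylogarithmic parameter bound; residual algebraic factors in $\ell$ are absorbed by adjusting $C$, exactly as at the end of Lemma \ref{lemma:qtt-rank}. The main obstacle will be the construction and verification step: for off-corner cubes, whose natural underlying mesh is coarser than $\cG^{\ell-q}_{S_3}$, one must exhibit an \emph{explicit} embedding into $X^{\ell-q}_{S_3}$ that is compatible with the right-continuity prescription at the nodes shared with the geometric refinement, so that a single reference basis yields an exact low-rank factorization of $V^{(q)}$ simultaneously for all $(\xi,\eta,\zeta)$.
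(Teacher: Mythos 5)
There is a genuine gap: your rank bound is too weak to give the exponent claimed in the lemma. You bound $r_q$ by the dimension of a piecewise-polynomial space $X^{\ell-q}_{S_3}$ on a geometric mesh of depth $\ell-q$, i.e., $r_q\leq C(\ell-q)p^3\leq C\ell^{3\gev+1}$. Feeding this into the parameter count \eqref{eq:Ndof-TT} for the transposed format gives $\Ndof=\mathcal{O}(\ell\, r^2)=\mathcal{O}(\ell^{6\gev+3})$, not the claimed $\mathcal{O}(\ell^{6\gev+1})$. Your closing remark that ``residual algebraic factors in $\ell$ are absorbed by adjusting $C$'' is valid for the error estimate (algebraic factors against an exponential), but not for the degree-of-freedom count, where an extra factor of $\ell^2$ changes the exponent $\kappa$ and cannot be hidden in a constant.

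The idea you are missing is the one the paper's proof rests on: in the transposed ordering, each row of the $q$-th unfolding with index $\overline{\xi_1\eta_1\zeta_1}>0$ corresponds to the dyadic cube $\overline{I^q_{\xi_1}}\times\overline{I^q_{\eta_1}}\times\overline{I^q_{\zeta_1}}$ of side $2^{-q}$, and by the self-similarity of the corner-refined mesh every such cube is contained in the closure of a \emph{single} element of $\cG^\ell$ (take the layer $\ell-q+k$ with $k-1=\lfloor\log_2\max(\xi_1,\eta_1,\zeta_1)\rfloor$). Hence the sampled values on each off-corner row come from one polynomial in $\mathbb{Q}_p(S^q)$, a space of dimension $(p+1)^3$ with no factor $\ell-q$; and since $\Pihpell u$ is $H^1$-conforming, there is no right-continuity issue at all. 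The exceptional row $(0,0,0)$, where the geometric refinement continues, is a single row of the matrix and therefore adds at most $1$ to the rank --- it does not require a reference space resolving the whole residual geometric mesh. This yields $r_q\leq (p+1)^3+1\leq C\ell^{3\gev}$ and hence $\Ndof\leq C\ell^{6\gev+1}$. Your treatment of the corner cube by a full depth-$(\ell-q)$ mesh, and of the off-corner cubes as meeting several elements, is exactly what costs you the extra factor of $\ell$ in the rank and $\ell^2$ in $\Ndof$.
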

\begin{proof}
By Proposition \ref{prop:hp-reinterp}, 
for all $0 < \epsilon \leq \epsilon_0$ 
exists $\ell\in \mathbb{N}$ such that 
$\| u - \fP^\ell u \|_{H^1(Q)}\leq \epsilon$,
with
  \begin{equation*}
    \ell \leq C |\log\epsilon|,
  \end{equation*}
  and $C$ independent of $\epsilon$.
Let then $q\in\{1, \dots, \ell-1\}$; we consider the $q$th unfolding matrix of the
transposed QTT representation of $\vqtttell$, i.e., the $q$th unfolding matrix of
$\fT^{\qttt}(\scrA^\ell(\vqtttell))$, as defined in \eqref{eq:transposedQTT-tensorization}.
This is the matrix with entries
  \begin{equation}
    \label{eq:qt3-unfolding}
    U^{(q)}_{\overline{\xi_1\eta_1\zeta_1}, \overline{\xi_2\eta_2\zeta_2}} 
    = 
    \scrA^\ell(\vqtttell)_{\overline{\xi_1\xi_2}, \overline{\eta_1\eta_2}, \overline{\zeta_1\zeta_2}} 
    = 
    \vqtttell(x_{\overline{\xi_1\xi_2}, \overline{\eta_1\eta_2}, \overline{\zeta_1\zeta_2}})
  \end{equation}
  with $\xi_1, \eta_1, \zeta_1 \in \{0, \dots, 2^q-1\}$ and $\xi_2,\eta_2,
  \zeta_2\in \{0, \dots, 2^{\ell-q}-1\}$. Following the proof of Lemma
  \ref{lemma:qtt-rank}, we introduce a reference cube
  \begin{equation*}
    S^q = (0, 2^{-q})^3
  \end{equation*}
  and a reference space
  \begin{equation*}
    X_{S^q} = \mathbb{Q}_{\pmax}(S^q).
  \end{equation*}
  Then, let $\phi_{i,j,k}$ be the map from the reference square
  to the space of points of the row $\overline{ijk}$ of the
  unfolding matrix, i.e.,
  \begin{equation*}
    \phi_{i,j,k} :(x_1,x_2,x_3)\mapsto (x_1 + 2^{-q}i, x_2 + 2^{-q}j, x_3 + 2^{-q}k),
  \end{equation*}
  so that $x_{\overline{\xi_1\xi_2}, \overline{\eta_1\eta_2},
    \overline{\zeta_1\zeta_2}} \in \phi_{\xi_1, \eta_1, \zeta_1}(S^q)$ for all
  $\xi_1, \eta_1, \zeta_1\in \{0, \dots, 2^q-1\}$, 
see Figure \ref{fig:slice-trans}.
\begin{figure}
    \centering
    \includegraphics[width=.6\textwidth]{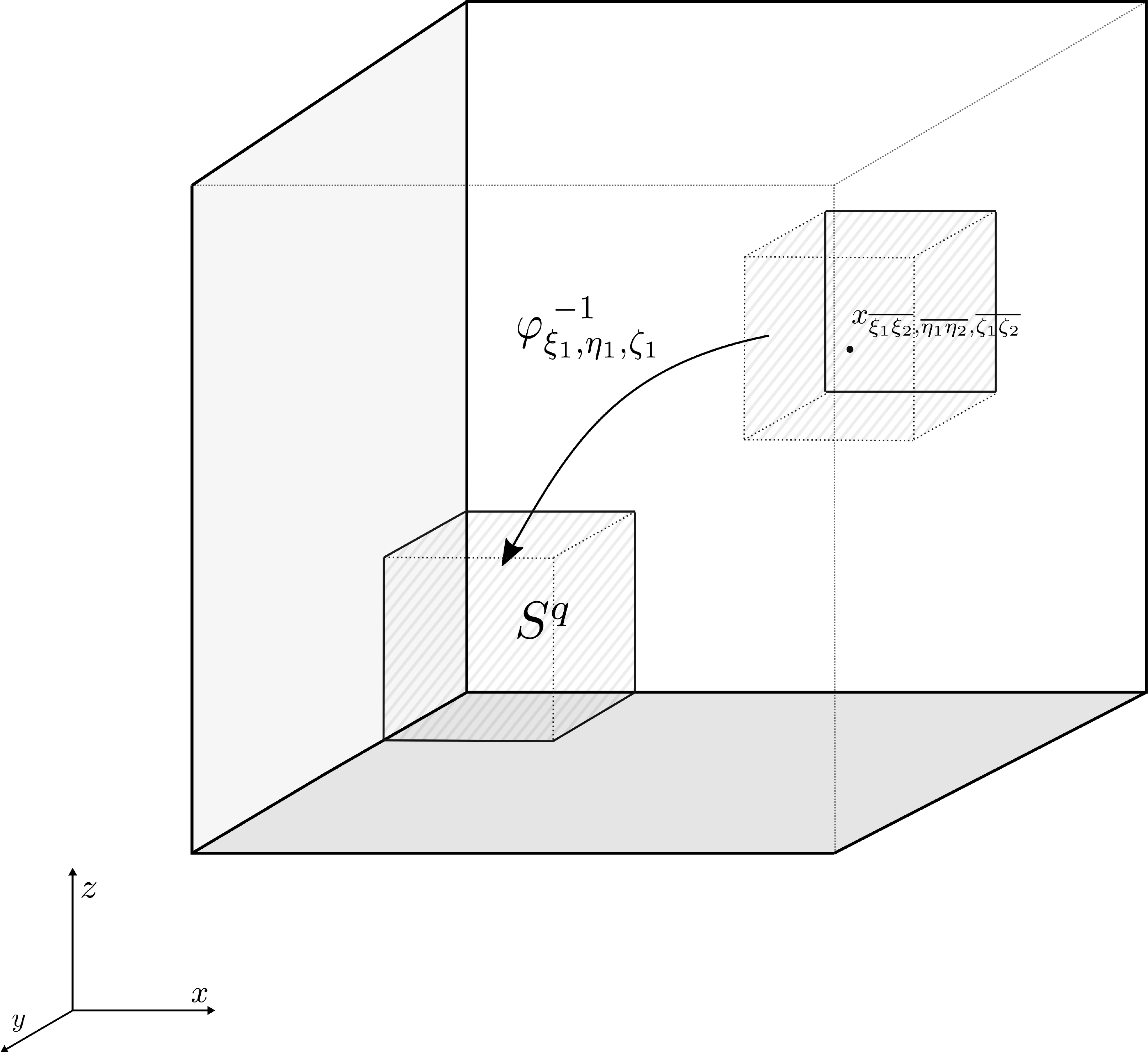}
    \caption{Reference cube $S^q$ for transposed order QTT and
      representation of $\phi_{\xi_1, \eta_1, \zeta_1}$ .}
    \label{fig:slice-trans}
  \end{figure}
  Remark now that 
  \begin{equation*}
    \left\{x_{\overline{\xi_1\xi_2}, \overline{\eta_1\eta_2},
    \overline{\zeta_1\zeta_2}}\right\}_{\substack{\xi_1, \eta_1,\zeta_1\in \{0, \dots, 2^q-1\}\\\xi_2, \eta_2, \zeta_2\in \{0, \dots, 2^{\ell-q}-1\}}} \subset \overline{I^q_{\xi_1}}\times\overline{I^q_{\eta_1}}\times\overline{I^q_{\zeta_1}}.
  \end{equation*}
  Suppose now $\overline{\xi_1\eta_1\zeta_1}>0$: 
  then, there exists a single element $K\in \cG^\ell$ 
  such that 
  \begin{equation*}
    \overline{I^q_{\xi_1}}\times\overline{I^q_{\eta_1}}\times\overline{I^q_{\zeta_1}}
  \subset \overline{K},
  \end{equation*}
  thus,
  \begin{equation*}
  \vqtttell \circ \phi_{\xi_1, \eta_1, \zeta_1} \in X_{S^q}.
  \end{equation*}
  This implies that, for each $\xi_1, \eta_1, \zeta_1$: $\overline{\xi_1\eta_1\zeta_1}>0$, there exists a polynomial
  $p_{\overline{\xi_1\eta_1\zeta_1}}\in X_{S^q}$ that interpolates
  $\vqtttell\circ\phi_{\xi_1, \eta_1, \zeta_1}$ in the reference cube, i.e., 
  \begin{equation*}
  \left(\vqtttell \circ \phi_{\xi_1, \eta_1, \zeta_1} \right)\left( \phi_{\xi_1, \eta_1, \zeta_1}^{-1} \left(x_{\overline{\xi_1\xi_2}, \overline{\eta_1\eta_2},
    \overline{\zeta_1\zeta_2}} \right)\right) = p_{\overline{\xi_1\eta_1\zeta_1}}(x_{\xi_2, \eta_2, \zeta_2}).
  \end{equation*}
  Note that for $\xi_1 = \eta_1 = \zeta_1 = 0$, the function $\vqtttell \circ
  \phi_{\xi_1, \eta_1, \zeta_1}$ is not a polynomial, which increases the
  dimension of the row space of $U^{(q)}$ by $1$.
  Since $\dim(X_{S^q}) \leq C \pmax^3$, and using the same arguments as in the proof of Lemma \ref{lemma:qtt-rank} it can be concluded that
  \begin{equation*}
    r_q = \rank(U^{(q)}) \leq\dim(X_{S^q}) + 1 \leq  C \ell^{3\gev}
  \end{equation*}
which gives, due to \eqref{eq:Ndof-TT}, a total number of degrees of freedom
$\Ndof \leq C \ell^{6\gev+1}$. 
The fact that $\ell\lesssim |\log\epsilon|$ concludes the proof. \myqed
\end{proof}
\subsection{Rank bounds of TQTT approximations}
\label{sec:TQTT}
In this section, we prove rank bounds for the TQTT approximation. We start 
  by proving, in Lemma \ref{lemma:qtt1d} below, rank bounds for the \emph{block QTT decomposition}
  of collections of piecewise polynomial functions.
  Block QTT decompositions are precisely defined in the following.
\begin{definition}[Block QTT decomposition]
  \label{def:bQTT}
  Let $\ell\in \mathbb{N}$ and let
  $A_\alpha : \{0,\dots,2^\ell -1\} \to \mathbb{R}$ for every $\alpha=1,\dots,s$.
  We say that the collection $\{A_\alpha\}_{\alpha}$ admits a 
  \emph{block QTT decomposition} with ranks $r_0, \dots, r_{\ell}$
  and cores $U^1, U^2, \dots, U^\ell$, if
  \begin{equation*}
    A_\alpha \left(\overline{i_1\dots i_\ell}\right)
    = 
    U^1_\alpha(i_1) U^2(i_1)\cdots U^\ell(i_\ell) 
    \quad \forall (i_1, \dots, i_\ell)\in \{0,1\}^\ell, \, \forall \alpha\in \{1, \dots, s\},
  \end{equation*}
  for all $i_n\in \{0,1\}$, and with $U^n:\{0, 1\}\to \mathbb{R}^{r_{n-1}\times r_n}$
  for all $n = 1, \dots, \ell$. By $U^1_\alpha(i_1)$ we indicate the
    $\alpha$th row of $U^1(i_1)$.
  We have the restriction on the ranks $r_0 = s$, $r_\ell = 1$.
\end{definition}
We also need the definition, 
on the geometric mesh $\cG^\ell_{\mathrm{1d}}$ (see \eqref{eq:hpmesh1d}), 
of the univariate $hp$-FE space 
\[
\Xhpelloned =  \{v\in H^1((0,1)): 
v_{|_{K}}\in \mathbb{Q}_{p}(K), \text{ for all }K\in \cG^\ell_{\mathrm{1d}}\}\;.
\]
\begin{lemma}
\label{lemma:qtt1d}
Let $\{w_\alpha \}_{\alpha=1}^s\subset \Xhpelloned$, 
and let $W_\alpha : \{0,\dots,2^\ell -1\} \to \mathbb{R}$ 
be such that $W_\alpha(i) =  w_\alpha(2^{-\ell}i)$, 
for all $\alpha=1,\dots,s$ and $i=0,\dots,2^\ell-1$.
Then the collection $\{W_\alpha\}_{\alpha=1}^s$
admits a block QTT representation with ranks $r_n \leq s + p + 1$ 
for all $n=1, \dots, \ell-1$.
\end{lemma}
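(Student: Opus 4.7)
The strategy is to recast the block QTT decomposition as a standard QTT decomposition of the augmented $(\ell+1)$-way tensor
\[
\tW_{\alpha, i_1, \dots, i_\ell} = w_\alpha\bigl(2^{-\ell}\,\overline{i_1 \dots i_\ell}\bigr),
\]
whose first mode, of size $s$, plays the role of the block (open) index carried by the first core in Definition \ref{def:bQTT}. By the standard TT existence result, it suffices to bound, for each $q = 1, \dots, \ell-1$, the rank of the unfolding matrix
\[
M^{(q)}_{(\alpha, m),\, j} = w_\alpha\bigl(2^{-q} m + 2^{-\ell} j\bigr),
\qquad
m = \overline{i_1 \dots i_q},
\quad
j = \overline{i_{q+1} \dots i_\ell},
\]
with rows indexed by $(\alpha, m) \in \{1, \dots, s\}\times\{0, \dots, 2^q-1\}$ and columns by $j \in \{0, \dots, 2^{\ell-q}-1\}$. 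The goal becomes $\rank M^{(q)} \leq s + p + 1$.

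The crucial geometric observation is that for every $m \geq 1$, the interval $I_{q,m} = [2^{-q} m,\, 2^{-q}(m+1)]$ is contained in a single element of the geometric mesh $\cG^\ell_{\mathrm{1d}}$. Indeed, setting $a = \lfloor \log_2 m \rfloor \in \{0, \dots, q-1\}$, one has $2^a \leq m < m+1 \leq 2^{a+1}$, hence $I_{q,m} \subset [2^{-q+a},\, 2^{-q+a+1}]$, and the latter interval is precisely $J^\ell_{1,\ell-q+a+1}$, an element of $\cG^\ell_{\mathrm{1d}}$. Consequently, $w_\alpha$ restricts to a polynomial of degree at most $p$ on $I_{q,m}$, so we may expand $w_\alpha(2^{-q} m + t) = \sum_{k=0}^p c^{(\alpha, m)}_k t^k$ for $t \in [0, 2^{-q}]$. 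Evaluating at $t = 2^{-\ell} j$ shows that every row of $M^{(q)}$ with $m \geq 1$ lies in the linear span of the $p+1$ universal vectors $e_k = (j^k)_{j=0}^{2^{\ell-q}-1}$, $k = 0, \dots, p$, a subspace of dimension at most $p+1$ that is common to all such rows.

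For $m = 0$ the interval $I_{q,0} = [0, 2^{-q}]$ contains the geometric breakpoints $2^{-\ell}, 2^{-\ell+1}, \dots, 2^{-q}$, so $w_\alpha|_{I_{q,0}}$ is genuinely piecewise polynomial and no analogous collapse is available. However, there are only $s$ such rows (one per $\alpha$), so they contribute at most $s$ further dimensions to the row space. Combining the two cases yields
\[
\rank M^{(q)} \leq s + (p+1),
\]
uniformly in $q = 1, \dots, \ell-1$, which is the desired bound.

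I anticipate the only delicate point to be the verification of the containment $I_{q,m} \subset J^\ell_{1,\ell-q+a+1}$, which relies on the compatibility between the dyadic virtual mesh and the geometric mesh already noted in Section \ref{sec:GeoMes}; the remaining steps are routine linear algebra once the polynomial structure on each $I_{q,m}$ with $m \geq 1$ has been isolated.
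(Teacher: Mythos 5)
Your proof is correct, but it takes a genuinely different route from the paper's. The paper proves Lemma \ref{lemma:qtt1d} \emph{constructively}: it starts from the known explicit QTT cores of a single polynomial sampled on an equispaced grid (ranks $p+1$) and assembles, in closed form, block cores of size $(p+s+1)\times(p+s+1)$ in which a $(p+1)\times(p+1)$ block propagates the "active" polynomial and an $s\times s$ identity block carries the $s$ functions forward until the index pattern $0\cdots01$ selects the element $J^\ell_{1,k}$ on which they become polynomial. You instead prepend the block index $\alpha$ as an extra mode and invoke the existence result of \cite[Theorem 2.2]{IO2011a}, reducing the claim to a rank bound on each unfolding $M^{(q)}$; the key observation that every row with $m=\overline{i_1\dots i_q}\geq 1$ samples a single polynomial (because $I^q_m\subset \overline{J^\ell_{1,\lfloor\log_2 m\rfloor+\ell-q+1}}$, as already recorded in Section \ref{sec:GeoMes}) places all such rows in the common $(p+1)$-dimensional span of $\{(j^k)_j\}_{k=0}^p$, while the $s$ rows with $m=0$ add at most $s$ dimensions, giving $\rank M^{(q)}\leq s+p+1$. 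Your worry about the containment is unfounded: since $\Xhpelloned\subset H^1((0,1))\subset C([0,1])$, the values at dyadic mesh nodes are unambiguous and consistent with the polynomial on the closed element, so no right-continuity convention (as needed in Lemma \ref{lemma:qtt-rank} for the discontinuous space $X^q_{S_1}$) is required here. What each approach buys: yours is shorter, matches the unfolding-matrix style of Lemmas \ref{lemma:qtt-rank} and \ref{lemma:qt3-rank}, and is purely existential; the paper's yields explicit core formulas, which is of independent practical value. Both deliver exactly the bound $r_n\leq s+p+1$.
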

\begin{proof}
	We provide a constructive proof with explicit formulas for the QTT cores.
In the proof, the $m$-by-$n$ matrix with zero entries will be denoted by
$\mathbf{O}_{m\times n}$, while the $n$-by-$n$ identity matrix will be written
as $\mathbf{I}_n$. Let 
	\[
		q(x) = \pc^\top \mathbf{m}(x),
	\] 
        where
	\[
		\pc = 
		\begin{bmatrix}
			a_0 & a_1 & \dots & a_p
		\end{bmatrix}^\top
		\quad
		\mathbf{m}(x) = 
		\begin{bmatrix}
			1 & x & x^2 & \dots & x^p
		\end{bmatrix}^\top
	\]
	be a polynomial of degree $\leq p$ and 
        $\mathbf{q} = \{q(x_i)\}_{i=0}^{2^\ell-1} \in\mathbb{R}^{2^\ell}$, $x_i = 2^{-\ell}\, i$.
	Then $\mathbf{q}$ admits the \emph{exact, explicit QTT representation}~\cite{IOConstRepFns}, 
        \cite{Khoromskij:2011:QuanticsApprox}, \cite{Grasedyck:2010:HTens} 
        with ranks $r_k = p+1$:
	\[
	\mathbf{q}_i = Q^1(i_1) Q^2(i_2) \dots Q^\ell (i_\ell), \quad 
        i = \overline{i_1\dots i_\ell},
	\]
	where 
	\begin{equation}\label{eq:qttpoly}
	\begin{split}
		&
    Q^1(i_1) = 
		\pmb{\phi}^\mathbf{a}(2^{-1}i_1) \equiv 
		\begin{bmatrix}
			\varphi_0^\mathbf{a} \left(2^{-1} i_1\right) & \dots & \varphi_p^\mathbf{a} \left(2^{-1} i_1 \right)
		\end{bmatrix}, 
		\\
    & \qquad\qquad\text{with } \varphi_m^\mathbf{a}(x) = a_m + \sum_{k=m+1}^p a_k C_k^m x^{k-m}, \ \ m = 0,\dots,p, \\
		&Q^k(i_k) = \mathbf{Q}(2^{-k} i_k),\ \ k=2,\dots,\ell-1,
    \\
    & \qquad \qquad\text{with }
			\mathbf{Q}(x)_{ij} = 
			\begin{cases}
				C_i^{i-j} x^{i-j} &i > j, \\
				C_i^{0} & i = j, \\
				0 & \text{otherwise},
			\end{cases}
			\quad i,j=0,\dots,p, \\
		& Q^\ell(i_\ell) = \mathbf{m} (2^{-\ell} i_\ell).
	\end{split}
	\end{equation}

Let now $w_\alpha\in \Xhpelloned$, $\alpha=1,\dots,s$ be given by 
polynomials with the coefficients ${\pc_k}^{\mkern-5mu (\alpha)}$, $k=1,\dots,\ell + 1$ 
on subintervals $[2^{-k},2^{1-k})$ for $k = 1,\dots,\ell$ and $[0,2^{-\ell})$ for $k=\ell + 1$.

Consider the points $x_{\overline{i_1\dots i_\ell}}$.
The case of $i_1=1$ and any $i_n\in\{0,1\}$, $n=2,\dots,\ell$ corresponds to the 
equispaced mesh points $x_{\overline{i_1\dots i_\ell}}$ from the interval $[1/2,1)$. 
Similarly, the case $i_1=\dots=i_{k-1} =0$, $i_k=1$ and any $i_n\in\{0,1\}$, $n=k+1,\dots,\ell$ 
corresponds to $x_{\overline{i_1\dots i_\ell}}\in [2^{-k},2^{1-k})$.

We conclude that $w_\alpha(x_{\overline{0\dots 0 1 i_{k+1}\dots i_\ell}})$ 
are polynomials sampled in equidistant points.
By utilizing this fact, explicit formulas~\eqref{eq:qttpoly} 
for the polynomial parts and combining expressions for each of them, we obtain:
	\[
	W_\alpha (i) = 
w_\alpha(2^{-\ell}i)
        = W^1_\alpha(i_1) W^2(i_2) \dots W^\ell (i_\ell), \quad i = \overline{i_1\dots i_\ell},
	\]
	where
	\[
	\begin{split}
		&W^1_:(i_1) =
		\begin{cases}
		\begin{bmatrix}
			\mathbf{\Phi}_1 (2^{-1}i_1) & \mathbf{O}_{s\times s}
		\end{bmatrix},
		& i_1 = 1 \\[10pt]
		\begin{bmatrix}
			\mathbf{O}_{s\times (p+1)} & \mathbf{I}_s
		\end{bmatrix}, 
		& i_1 = 0
		\end{cases}
		\\
		& W^k (i_k) = 
		\begin{cases}
		\begin{bmatrix}
			\mathbf{Q}(2^{-k} i_k) & \mathbf{O}_{(p+1)\times s} \\
			\mathbf{\Phi}_k (2^{-1}i_k) & \mathbf{O}_{s\times s}
		\end{bmatrix},
		& i_k = 1\\[15pt]
		\begin{bmatrix}
			\mathbf{Q}(2^{-k} i_k)  & \mathbf{O}_{(p+1)\times s} \\
			\mathbf{O}_{s \times (p+1)} & \mathbf{I}_s
		\end{bmatrix},
		& i_k = 0
		\end{cases}
		\qquad k = 2,\dots, \ell-1
		\\
		&W^\ell(i_\ell) =
		\begin{cases}
		\begin{bmatrix}
			\mathbf{m}(2^{-\ell} i_\ell) \\ 
			(\mathbf{a}_\ell^{(1)})^\top \mathbf{m}(2^{-\ell} i_\ell) \\
			\dots \\
			(\mathbf{a}_\ell^{(s)})^\top \mathbf{m}(2^{-\ell} i_\ell) 
		\end{bmatrix}, 
		& i_\ell = 1 \\[15pt]
		\begin{bmatrix}
			\mathbf{m}(2^{-\ell} i_\ell) \\ 
			(\mathbf{a}_{\ell+1}^{(1)})^\top \mathbf{m}(2^{-\ell} i_\ell) \\
			\dots \\
			(\mathbf{a}_{\ell+1}^{(s)})^\top \mathbf{m}(2^{-\ell} i_\ell) 
		\end{bmatrix}, 
		& i_\ell = 0
		\end{cases}
	\end{split}
	\]
and where we used the notation
\[
\mathbf{\Phi}_k(x) \equiv
	\begin{bmatrix}
		\pmb{\phi}^{\mathbf{a}_k^{(1)}}(x) \\
		\vdots\\
		\pmb{\phi}^{\mathbf{a}_k^{(s)}}(x) \\
	\end{bmatrix}\in\mathbb{R}^{s\times (p+1)}, \quad k = 1,\dots,\ell.
\]
For fixed $i_k$, $k=2,\dots,\ell-1$, 
the matrices $W^k(i_k)$ are of size $(p+s+1)\times (p+s+1)$. 
We conclude that the ranks of the collection $\{W_\alpha \}_\alpha$ are bounded from above by $p+s+1$.
This completes the proof.
\myqed
\end{proof}

\begin{lemma}
  \label{lemma:tqtt-rank}
  Let $0<\epsilon_0< 1$ and $u\in X$. 
  Then, there exists a constant $C>0$ (depending on $u$ and on $\epsilon_0$) 
  such that for all $0 < \epsilon\leq \epsilon_0$ 
  there exists $\ell\in\mathbb{N}$ and $\vqttell = \fP^\ell u$, 
  such that $\| u - \vqttell\|_{H^1(Q)} \leq \epsilon$.
  Furthermore, $\vqttell$ admits a TQTT representation with
  \begin{equation*}
    \Ndof \leq C |\log\epsilon|^{3\gev+3}
  \end{equation*}
  degrees of freedom.
\end{lemma}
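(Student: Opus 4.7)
The plan is to start from Proposition \ref{prop:hp-reinterp}: choose $\ell\simeq|\log\epsilon|$ so that $\|u-\fP^\ell u\|_{H^1(Q)}\leq\epsilon$, reducing the task to producing a TQTT representation of $\vqttell=\fP^\ell u=\cI^\ell\Pihpell u$ with $\Ndof\leq C\ell^{3\gev+3}$. Contrary to the QTT and QT3 proofs, which bound ranks of unfolding matrices $V^{(q)}$ via row-space dimension arguments, the TQTT case should be handled by exposing directly the Tucker-plus-1d-$hp$ structure that $\Pihpell u$ already possesses.

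The structural engine I intend to exploit is the embedding $\Xhpell\subset \Xhpelloned\otimes\Xhpelloned\otimes\Xhpelloned$. Every element $K^\ell_{abc,k}\in\cG^\ell$ is an axiparallel box that is a union of cells of the finer tensor-product mesh $\cG^\ell_{\mathrm{1d}}\times\cG^\ell_{\mathrm{1d}}\times\cG^\ell_{\mathrm{1d}}$ (for example $K^\ell_{110,k}$ is the union of $k{+}1$ tensor-product cells in the $x_3$ direction), and a $\mathbb{Q}_p$ function on a coarse element remains $\mathbb{Q}_p$ on every axiparallel sub-cell while preserving $H^1$-conformity. Fixing a basis $\{\phi_i\}_{i=1}^{N}$ of $\Xhpelloned$ with $N=\dim(\Xhpelloned)\leq Cp\ell\leq C\ell^{\gev+1}$, this inclusion lets me write
\[
  \Pihpell u(x_1,x_2,x_3)=\sum_{i,j,k=1}^{N} G_{ijk}\,\phi_i(x_1)\phi_j(x_2)\phi_k(x_3).
\]

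Because $\cT^\ell$ is a tensor-product mesh and $\cI^\ell$ factors as a threefold tensor product of 1d nodal interpolants, the values of $\vqttell$ at the grid points $\bx_{a,b,c}$ read
\[
  \scrA^\ell(\vqttell)_{a,b,c}=\sum_{i,j,k=1}^{N} G_{ijk}\,\phi_i(2^{-\ell}a)\,\phi_j(2^{-\ell}b)\,\phi_k(2^{-\ell}c),
\]
which is an explicit Tucker decomposition with Tucker ranks $R_1=R_2=R_3=N$, Tucker core $G$, and factor-matrix columns given by the sampled 1d $hp$ basis functions. To cast each factor matrix in the block-QTT form demanded by Definition of the TQTT format, I would invoke Lemma \ref{lemma:qtt1d} with $s=N$: the columns belong to $\Xhpelloned$, so the intermediate block-QTT ranks are bounded by $r_{\tqtt}\leq N+p+1\leq C\ell^{\gev+1}$, and the Tucker index becomes the extra row index of the first QTT core, exactly as in \eqref{eq:TQTT-cores}. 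Substituting $R,r_{\tqtt}\leq C\ell^{\gev+1}$ into the parameter count \eqref{eq:Ndof-TT} gives
\[
  \Ndof\leq C\bigl(\ell^{3(\gev+1)}+\ell\cdot\ell^{2(\gev+1)}+\ell^{\gev+1}\cdot\ell^{\gev+1}\bigr)\leq C\ell^{3\gev+3},
\]
and the choice $\ell\lesssim|\log\epsilon|$ closes the estimate.

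The step I expect to require the most care is the embedding $\Xhpell\hookrightarrow\Xhpelloned\otimes\Xhpelloned\otimes\Xhpelloned$: the $1$-irregular hexahedral mesh $\cG^\ell$ is \emph{not} itself a tensor product, so one must argue that it refines into one while preserving both the elementwise polynomial degree and the $H^1$-conformity of the $hp$-approximant. Once that inclusion is granted, the Tucker expansion, its commutation with the tensor-product interpolant $\cI^\ell$, and the block-QTT compression via Lemma \ref{lemma:qtt1d} are essentially mechanical, and the dominant contribution in the final count comes from the Tucker core $R^3\simeq\ell^{3\gev+3}$ rather than from the QTT factors.
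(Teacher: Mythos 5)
Your proposal is correct and follows essentially the same route as the paper: reduce to $\ell\simeq|\log\epsilon|$ via Proposition \ref{prop:hp-reinterp}, exhibit a Tucker decomposition of $\scrA^\ell(\fP^\ell u)$ with all three Tucker ranks bounded by $\dim(\Xhpelloned)\leq C\ell^{\gev+1}$ and factors given by sampled univariate $hp$ basis functions, compress the factors with Lemma \ref{lemma:qtt1d}, and observe that the Tucker core $R^3\simeq\ell^{3\gev+3}$ dominates the count. The only (cosmetic) difference is that you obtain the Tucker decomposition from the explicit embedding $\Xhpell\hookrightarrow\Xhpelloned\otimes\Xhpelloned\otimes\Xhpelloned$, whereas the paper bounds each mode-matricization rank fiber-by-fiber (each fiber being a sample of a function in $\Xhpelloned$) and invokes the standard existence of a Tucker decomposition with those mode ranks; both rest on the same observation that the $1$-irregular geometric mesh refines to a tensor product of copies of $\cG^\ell_{\mathrm{1d}}$.
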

\begin{proof}
Consider the three-dimensional array $V = \scrA^\ell(\vtqttell) $, with entries
\begin{equation*}
  V_{\xi, \eta, \zeta} = \vtqttell(x_{\xi, \eta, \zeta}).
\end{equation*}
We start by showing that the $1$-rank of $V$, i.e., the column rank of
\begin{equation*}
  V_{(1)} = V_{\xi, \overline{\eta\zeta}}
\end{equation*}
is bounded by $C\ell p$, for $C>0$ independent of $\ell, p$. Indeed, for all $\eta,
\zeta \in \{0, \dots, 2^\ell-1\}$, the column $\{V_{\xi,
  \overline{\eta\zeta}}\}_\xi$ contains the evaluation of a piecewise polynomial 
  in the finite dimensional space
\begin{equation*}
  \Xhpelloned 
  = 
  \{v\in H^1((0,1)): v_{|_K}\in \mathbb{Q}_{\pmax}(K), 
   \text{ for all }K\in \cG^\ell_{\mathrm{1d}}\}, 
\end{equation*}
i.e., 
there exists $p_{\overline{\eta\zeta}}\in \Xhpelloned$ 
such that
\begin{equation*}
  V_{\xi, \overline{\eta\zeta}} = p_{\overline{\eta\zeta}}(2^{-\ell}\xi) 
  \qquad \text{for all }\xi\in \{0, \dots , 2^\ell-1\}.
\end{equation*}
Since 
$\dim\left(\Xhpelloned \right) \leq C \ell\pmax $, 
we
have that $\rank(V_{(1)}) \leq C\ell^{\gev+1}$.
We write $R = \dim\left(\Xhpelloned \right)$, 
denote by $\{b_n\}_{n=1}^R$ a basis for $\Xhpelloned$,
and repeat the argument above in the other two cardinal directions. 
It follows that there exists a Tucker decomposition
such that, for $\xi, \eta, \zeta\in \{0, \dots, 2^\ell-1\}$,
\begin{equation}
\label{eq:Tucker}
 V_{\xi, \eta, \zeta} = \sum_{\beta_1 = 1}^{R}\sum_{\beta_2 = 1}^{R}\sum_{\beta_3 = 1}^{R} 
G_{\beta_1, \beta_2, \beta_3} U_{\beta_1} (\xi)V_{\beta_2}(\eta) W_{\beta_3}(\zeta) 
\end{equation}
where $R\leq C\ell^{\gev+1}$ and such that
\begin{equation}
  \label{eq:Tucker-cores}
  U_{\beta_1} (\xi) = b_{\beta_1}(2^{-\ell }\xi),\qquad
  V_{\beta_2} (\eta) = b_{\beta_2}(2^{-\ell }\eta),\qquad
  W_{\beta_3} (\zeta) = b_{\beta_3}(2^{-\ell }\zeta),
\end{equation}
for all $\beta_1, \beta_2. \beta_3\in \{1, \dots, R\}$,
see \cite[Chapter 8]{Hackb_TensorBook}.

Applying Lemma~\ref{lemma:qtt1d} to the Tucker factors $U, V, W$, 
we obtain their block QTT representation with tensor ranks 
$\{R, r_\mathrm{QTT},\dots, r_\mathrm{QTT}, 1\}$ bounded as
\[
	r_\mathrm{QTT} = R + p + 1 \leq C \ell^{\gev+1}.
\]
To store Tucker factors in the block QTT format, we need to store 
\[
{N}_{\mathrm{dof}}^{\mathrm{factors}} 
= 
3\cdot 2 \cdot (r_\mathrm{QTT}R + r_\mathrm{QTT}^2 (\ell-2) + r_\mathrm{QTT}) 
\leq C \ell^{2\gev+3}
\] 
parameters of the decomposition.
Storing the Tucker core requires an 
additional
\[
{N}_{\mathrm{dof}}^{\mathrm{core}} = R^3 \leq C \ell^{3\gev+3}
\]
parameters.
This gives the following 
bound for the overall number of degrees of freedom 
in the TQTT representation
\begin{equation*}
{N}_{\mathrm{dof}} 
= 
{N}_{\mathrm{dof}}^{\mathrm{core}} 
+ 
{N}_{\mathrm{dof}}^{\mathrm{factors}} \leq C \ell^{3\gev+3}\;.
\end{equation*}
Choosing $\ell\simeq |\log\epsilon|$ and using Proposition \ref{prop:hp-reinterp} 
completes the proof.
\myqed
\end{proof}
\subsection{Exponential convergence of QTT approximations of $u \in \cJ^\varpi_\gamma(Q)$}
\label{sec:ExpCnvQTTJgam}
From Proposition \ref{prop:hp-reinterp} and Lemmas \ref{lemma:qtt-rank}, 
\ref{lemma:qt3-rank}, and \ref{lemma:tqtt-rank}, we
obtain the following estimate for the QTT-Finite Element 
approximation of functions in $\cJ^\varpi_\gamma(Q)$.
In the following theorem, 
we introduce a tag $\texttt{qtd} \in \{ \texttt{qtt}, \texttt{tqtt} , \texttt{qt3}\}$, 
which generically denotes quantized tensor decomposition.
\begin{theorem}
  \label{th:QTT-analytic}
  Assume $\gamma>3/2$, $C_u>0$, $A_u>0$, $\gev\geq 1$, and $0< \epsilon_0 \ll 1$. 
  Furthermore, assume the function $u$ belongs to the weighted Gevrey class 
  $u\in \cJ^\varpi_\gamma(Q; C_u, A_u, \gamma, \gev)\cap H^1_0(Q)$.
  Then, there exists a constant $C>0$ such that,
  for all $0< \epsilon\leq \epsilon_0$, 
  there exists 
  $\ell \in \mathbb{N}$ and $\vqtdell \in \Xqttell$ 
  such that
  \begin{equation*}
    \| u - \vqtdell \|_{H^1(Q)} \leq \epsilon
  \end{equation*}
  and $\vqtdell$ admits a representation with
  \begin{equation*}
    \Ndof \leq C |\log\epsilon|^{\kappa}
  \end{equation*}
  degrees of freedom, with 
  \begin{equation*}
\kappa =
\begin{cases}
  4\gev+3 & \text{for classic QTT}\\
  6\gev+1 & \text{for transposed order QTT}\\
  3\gev+3 &  \text{for Tucker-QTT}
\end{cases}
\;.
  \end{equation*}
\end{theorem}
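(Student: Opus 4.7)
The theorem is a direct synthesis of the tools established earlier, so my proof plan is short and essentially a bookkeeping exercise combining Proposition \ref{prop:hp-reinterp} with the three format-specific rank bounds in Lemmas \ref{lemma:qtt-rank}, \ref{lemma:qt3-rank}, and \ref{lemma:tqtt-rank}.

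The plan is as follows. Fix $\epsilon \in (0,\epsilon_0]$ and observe that $u \in \cJ^\varpi_\gamma(Q;C_u,A_u,\gev) \cap H^1_0(Q) \subset X$, since $H^1_0(Q) \subset H^1_\Gamma(Q)$. Proposition \ref{prop:hp-reinterp} then yields $\ell \in \mathbb{N}$ with $\ell \leq C|\log \epsilon|$ such that the quasi-interpolant
\begin{equation*}
\vqtdell := \fP^\ell u = \cI^\ell \Pihpell u \in \Xqttell
\end{equation*}
satisfies $\|u - \vqtdell\|_{H^1(Q)} \leq \epsilon$. This is the candidate approximation for every one of the three formats; the only remaining task is to read off the rank/parameter bound on $\scrA^\ell(\vqtdell) \in \mathbb{R}^{2^\ell \times 2^\ell \times 2^\ell}$ in the chosen tensor decomposition.

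For each format, I would invoke the corresponding lemma to control $\Ndof$. In the classic QTT case, the proof of Lemma \ref{lemma:qtt-rank} actually shows the sharper bound $\Ndof \leq C \sum_{q=1}^{3\ell-1} r_q r_{q+1} \leq C \ell^{4\gev+3}$, driven by the middle range $\ell < q < 2\ell$ where the relevant slice ranks scale like $\ell^{2\gev+1}$; this gives $\kappa = 4\gev+3$. For QT3, Lemma \ref{lemma:qt3-rank} bounds each unfolding rank by $\dim \mathbb{Q}_p(S^q) + 1 \leq C\ell^{3\gev}$, and the $\mathcal{O}(\ell r_{\qttt}^2)$ cost from \eqref{eq:Ndof-TT} yields $\kappa = 6\gev+1$. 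For TQTT, Lemma \ref{lemma:tqtt-rank} reduces everything to the univariate space $\Xhpelloned$ of dimension $R \leq C\ell^{\gev+1}$, giving a core cost $R^3 \leq C\ell^{3\gev+3}$ that dominates the block-QTT factor cost of $\mathcal{O}(\ell^{2\gev+3})$, hence $\kappa = 3\gev+3$.

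In each case the conclusion follows by substituting $\ell \leq C|\log\epsilon|$ and absorbing constants. There is no genuine obstacle: all nontrivial work---the construction of the $hp$ projector $\Pihpell$ with exponentially small error, the rank bounds obtained by reducing unfoldings to piecewise polynomial spaces on geometrically graded one- and two-dimensional slices, and the block-QTT representation of univariate piecewise polynomials---has already been carried out in the preceding sections. The only mildly delicate point I would flag is making sure that the bound invoked for the classic QTT format is the $\ell^{4\gev+3}$ bound produced in the last line of the proof of Lemma \ref{lemma:qtt-rank} (rather than the looser bound in the lemma statement), since this is what gives the improved exponent $\kappa = 4\gev+3$ claimed for QTT in three dimensions.
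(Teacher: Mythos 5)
Your proposal is correct and follows exactly the route the paper takes: Theorem \ref{th:QTT-analytic} is stated there as an immediate consequence of Proposition \ref{prop:hp-reinterp} together with Lemmas \ref{lemma:qtt-rank}, \ref{lemma:qt3-rank}, and \ref{lemma:tqtt-rank}, which is precisely your synthesis. Your flag about the classic QTT case is well taken --- the exponent $4\gev+3$ claimed in the theorem indeed comes from the final estimate $\Ndof \leq C\sum_q r_q r_{q+1} \leq C\ell^{4\gev+3}$ inside the proof of Lemma \ref{lemma:qtt-rank} rather than from the weaker $6\gev+1$ bound appearing in that lemma's statement.
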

\begin{remark}[Rank bounds of QTT formatted approximations of two-dimensional corner singularities]
\label{rmk:QTTcrnr}
  Using the same techniques for the two-dimensional case 
  (which was already considered for the transposed order QTT in \cite{Kazeev2018} 
  in the analytic class, i.e. for $\gev = 1$) 
  results in the bound
\begin{equation*}
\kappa =
\begin{cases}
  2\gev+3 & \text{for classic QTT,}\\
  4\gev+1 & \text{for transposed order QTT}.
\end{cases}
\end{equation*}
In the case of two spatial variables,
the Tucker-QTT format is easily reduced to the classic QTT format 
for the index ordering $i_\ell,\dots,i_1,j_1,\dots,j_\ell$.
\end{remark}
\section{Numerical experiments}
\label{sec:NumExp}
In this section, we support the obtained theoretical results with numerical experiments.
First, 
in Section~\ref{sec:num:fe}, 
we construct FE approximants to functions defined 
in $Q=(0,1)^3$ with point singularities in three quantized tensor formats: 
QTT, transposed order QTT (QT3)  and Tucker QTT (TQTT), 
see Figure~\ref{fig:networks} for their tensor network representations. 
We note that for all formats under consideration, the 
numerically observed asymptotic behavior of rank versus error 
is better than that of theoretical estimates.

In Section~\ref{sec:num:evp}, we consider an elliptic eigenvalue problem 
with a singular potential -- the Schr\"odinger equation for a hydrogen atom. 
We approximate the solution using the finite element method with a 
tensor of coefficients represented the TQTT format.
The numerical results suggest that convergence rates of QTT 
formatted approximations are slightly higher than 
those achieved by the $hp$-FEM.
%
\subsection{QTT-FE approximation of functions with point singularities}
\label{sec:num:fe}
\begin{figure}
    \centering
    \includegraphics[width=0.55\textwidth]{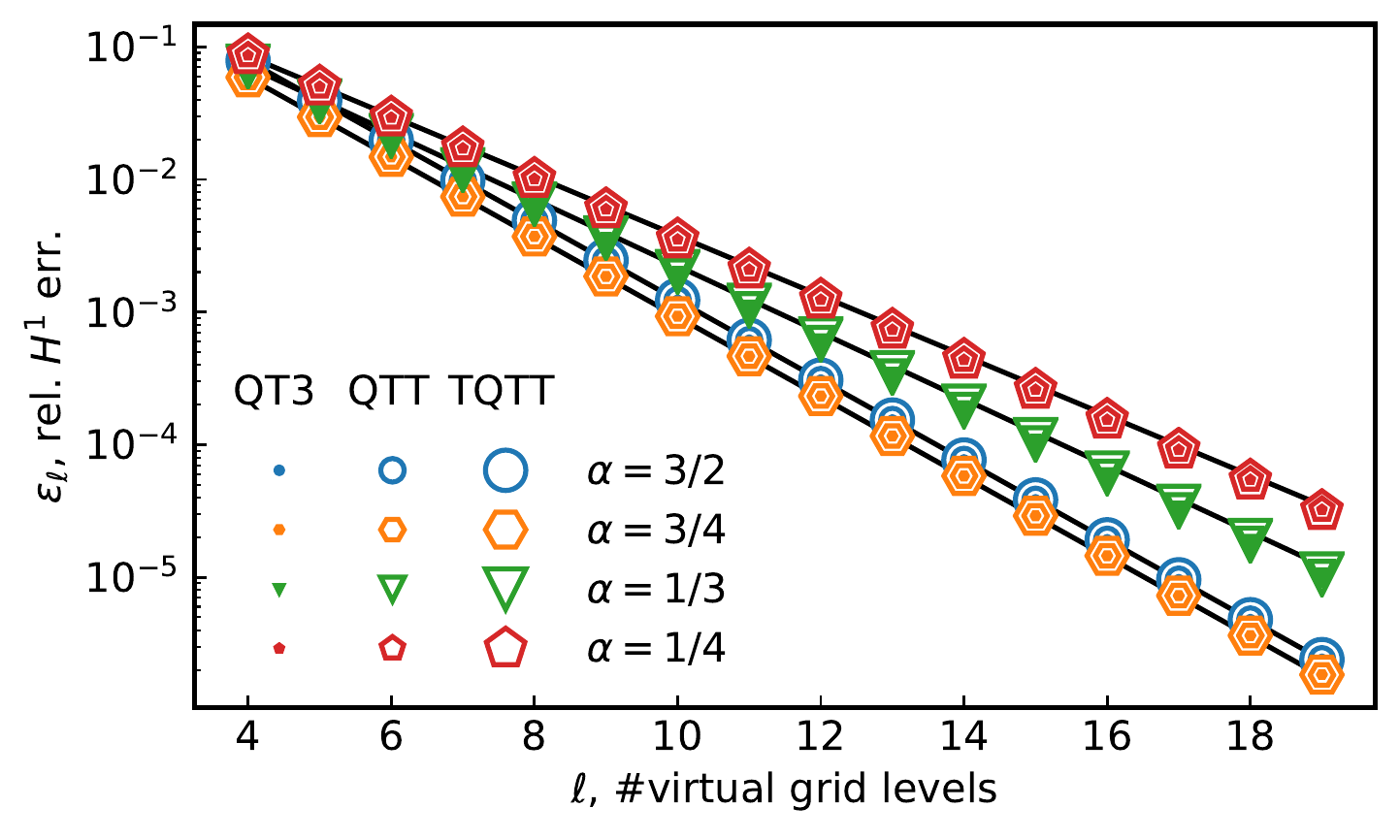}
    \caption{Number of virtual mesh levels versus the relative error in $|\cdot|_{H^1(D)}$ 
      seminorm for singularity exponent $\alpha = 3/2, 3/4, 1/3, 1/4$ and 
      for different quantized tensor formats: 
      QTT, QT3 (transposed QTT), TQTT (Tucker QTT). 
      The black lines correspond to 
      $\varepsilon_\ell =\mathcal{O}(2^{-\min\{\alpha + 1/2,\, 1\} \ell })$ convergence.}
    \label{fig:conv3d}
  \end{figure}

In this Section, we present the numerical results on function approximation. 
We will detail the approximation technique in
Remark~\ref{rmk:ExpSums}, while the details on the explicit construction of
prolongation matrices for the computation of the error will be postponed 
to Section~\ref{sec:prolong} in the appendix.

Let us consider the following smooth functions in $Q=(0,1)^3$ 
that exhibit singularities at the origin $x = (0, 0, 0)$:
\[
u(x) = |x|^\alpha\, m(x), \quad x = (x_1, x_2, x_3)\in Q,
\]
where $\alpha>0$ defines the strength of the singularity 
and $m(x) = (1-x_1^2)(1-x_2^2)(1-x_3^2)$ is chosen to ensure zero values of the function on $\Gamma$.
Note that the function $m(x)$ does not affect the singularity at 
the origin and can be represented with tensor ranks bounded from above by $3$ 
for QTT and TQTT formats and by $9$ for QT3 format.

Recall that by $\cI^\ell$ we denote the Lagrange interpolation operator on the uniform tensor mesh 
$\cT^\ell$:
\begin{equation*}
\mathcal{I}^\ell v 
= 
\sum_{(i,j,k)\in \{0, \dots, 2^\ell-1\}^3}v(\bx_{i,j,k})\,\phi_{i,j,k}.
\end{equation*}
In practice, $\uqtd^\ell$ will be an approximation of $\mathcal{I}^\ell u$ 
obtained by applying to $\scrA^\ell \mathcal{I}^\ell u$ 
the exponential sums representation (see Remark~\ref{rmk:ExpSums}) and by
interpolating on a staggered grid (see Remark~\ref{rmk:staggered}).
We introduce the rank-truncated representation of $\uqtd^\ell$, $\mathsf{qtd}\in \{ \mathsf{qtt}, \mathsf{tqtt}, \mathsf{qt3}\}$
based on the rounding procedure:
\[
\uqtd^{\ell,\delta} 
= 
\scrS^\ell\left(\texttt{round}_{\mathsf{qtd}}(\scrA^\ell \uqtd^\ell,\,\delta)\right),
\]
where $\texttt{round}_{\mathsf{qtd}}$ is a rounding operation that aims at reducing the numerical $\mathsf{qtd}$-rank 
of $\scrA^\ell \uqtd^\ell$ with the relative Euclidean error threshold $\delta$.
The rounding procedure is based on a sequence of QR and SVD decompositions, see~\cite[Alg. 2]{IO2011a} 
for TT (covers QTT and QT3 cases) and~\cite[Alg. 1]{DKQTT13} for two-level QTT-Tucker (covers the TQTT case with minor modifications).

For given $\uqtd^{\ell,\delta}$, 
we approximate the error $\widehat \varepsilon_\ell$ in the seminorm $|\cdot|_{H^1(Q)}$:
\[
\widehat \varepsilon_\ell = \frac{| \uqtd^{\ell,\delta} -
  u|_{H^1(Q)}}{|u|_{H^1(Q)}}= \frac{\| \nabla\uqtd^{\ell,\delta} - \nabla u
    \|_{L^2(Q)}}{\|\nabla u\|_{L^2(Q)}},
\]
by using the respective quantized tensor approximation of $u$ 
obtained on an equispaced mesh of axiparallel cubes 
with $L := 30$ levels of binary (virtual) refinement of $Q=(0,1)^3$:
\begin{equation}\label{eq:epsilon_def}
\widehat \varepsilon_\ell \approx \varepsilon_\ell \equiv 
\frac{\| \nabla\uqtd^{\ell,\delta} - \mathcal{I}^L_0(\nabla u)\|_{L^2(Q)}}{\|\mathcal{I}^L_0(\nabla u)\|_{L^2(Q)}}.
\end{equation}
Here $\mathcal{I}^L_0(\nabla u) = (\mathcal{I}^L_{0,1}(\partial_{x_1} u), \mathcal{I}^L_{0,2}(\partial_{x_2} u), \mathcal{I}^L_{0,3}(\partial_{x_3} u))^\top$ with $\mathcal{I}^L_{0,\beta}$, $\beta=1,2,3$ being interpolation operators on a span of $\{\partial_{x_\beta}\phi_{i,j,k}\}_{i,j,k}$. 

In Figure~\ref{fig:conv3d}, 
we present the convergence plots of the relative error 
$\varepsilon_\ell$ defined in~\eqref{eq:epsilon_def} for $\delta = 10^{-10}$ 
versus the virtual mesh levels~$\ell$ for different $\alpha$ and 
for different quantized tensor formats.
In all the cases, 
we observe empirical convergence in close correspondence with the rate
\[
\varepsilon_\ell 
= 
\mathcal{O}\left(2^{-\min\{\alpha + 1/2,\, 1\} \ell }\right), 
\quad \alpha > - \frac{1}{2}.
\]
This can be anticipated from classical FE interpolation error
bounds on an equispaced, cartesian mesh in $Q$, for 
functions
$[x\mapsto |x|^\alpha]$ in three spatial dimensions.

Let us first fix $\alpha = 3/2$.
In Figure~\ref{fig:fixedalpha}, 
we present $\varepsilon_\ell$ 
versus the effective number of degrees of freedom 
$N_\mathrm{dof}$ for three different tensor formats. 
On each gray dotted line we plot the error $\epsilon_\ell$ for 
one fixed~$\ell$ and for various values of~$\delta$.
The envelopes of the computed errors with respect to $\Ndof$ 
are highlighted with large empty markers.

\begin{figure}
  \centering
  \includegraphics[width=1.0\textwidth]{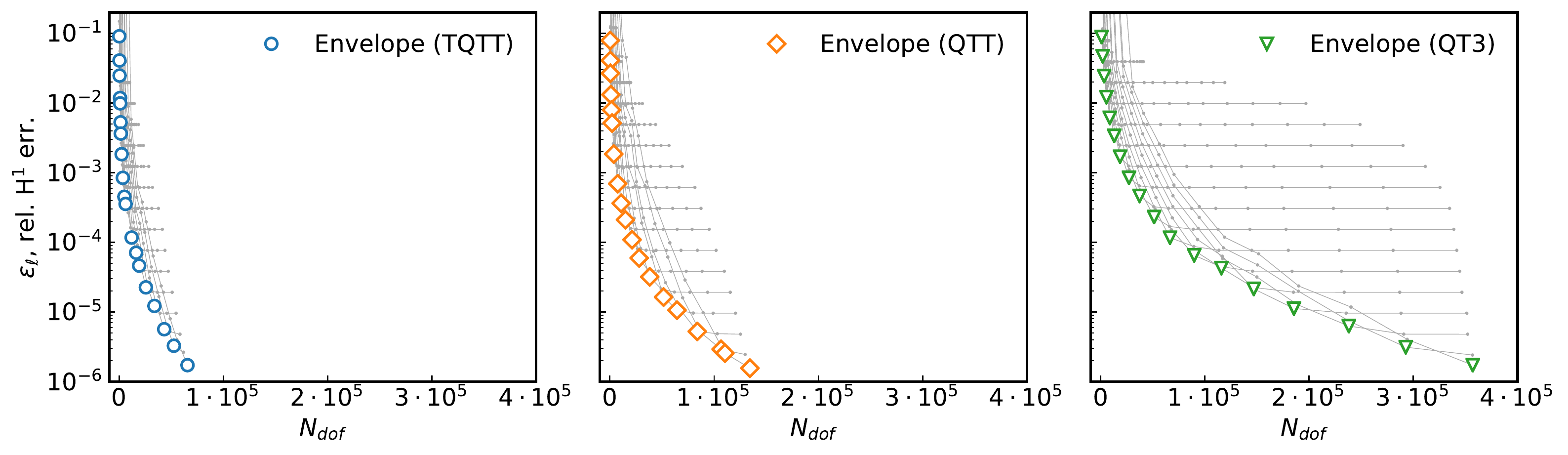}
  \caption{Each gray dotted line represents dependence of the estimated 
    relative seminorm $|\cdot|_{H^1(D)}$ error values $\varepsilon_\ell$ 
    on the rounding parameter $\delta$ for fixed $\ell$ as 
    suggested by~\eqref{eq:epsilon_def}. 
   Empty markers represent convex envelope of the gray dotted lines. 
   The limits for both axes coincide for each of the plots.}
    \label{fig:fixedalpha}
\end{figure}

In Figure~\ref{fig:conv3d_dof}, 
we depict $\varepsilon_\ell$ versus $N_\mathrm{dof}$ for $\alpha= 3/2,3/4,1/3$, and $1/4$ 
obtained as envelopes of the set of points obtained for different $\delta$ 
(see Figure~\ref{fig:fixedalpha} for $\alpha=3/2$).
By plotting $\log_{10}\log_2 \varepsilon_\ell^{-1}$ against $\log_{10} N_{\mathrm{dof}}$, 
we numerically estimate the constant $\kappa$ 
in the empirical exponential rate of convergence
\begin{equation}\label{eq:expconvemp}
	\varepsilon_\ell = C\,\mathrm{exp}(-b N_\mathrm{dof}^{1/\kappa}),
\end{equation}
for some positive constants $b$ and $C$.
Indeed, by first applying $\log_2$ to both sides of \eqref{eq:expconvemp}, 
we arrive at 
$\log_2 \varepsilon_{\ell}^{-1} = \tilde b N_\mathrm{dof}^{1/\kappa} - \log_2 C$, 
$\tilde b = -b \log 2$. 
Assuming $\log_2 C$ is small compared with $N_\mathrm{dof}^{1/\kappa}$ 
and taking $\log_{10}$ of both sides, we obtain 
\[
\log_{10}\log_2 \varepsilon_\ell^{-1} \approx \kappa^{-1} \log_{10} N_\mathrm{dof} + \log_{10} \tilde b. 
\]
In all the numerical examples considered, we observe $\kappa < 6$, i.e., 
higher convergence rates than those predicted by our 
quantized tensor rank bounds. 
We also observe
lower rates of convergence than those of $hp$-FE approximations of corner singularities 
in three spatial dimensions (see \eqref{eq:hp-errorN}), i.e., 
we find $\kappa > 4$.

Figures~\ref{fig:fixedalpha} and~\ref{fig:conv3d_dof} illustrate the fact that
in the range of $\ell$ considered, the transposed order QTT representation
requires more degrees of freedom to achieve a given accuracy $\epsilon$
than the two other formats even though it has empirical convergence
  \eqref{eq:expconvemp} with slightly smaller values of $\kappa$.
Among the tensor formats 
and for the examples considered, the TQTT format 
requires the smallest number of degrees of freedom
to achieve a prescribed accuracy $\epsilon$.
\begin{figure}
    \centering
    \includegraphics[width=1.0\textwidth]{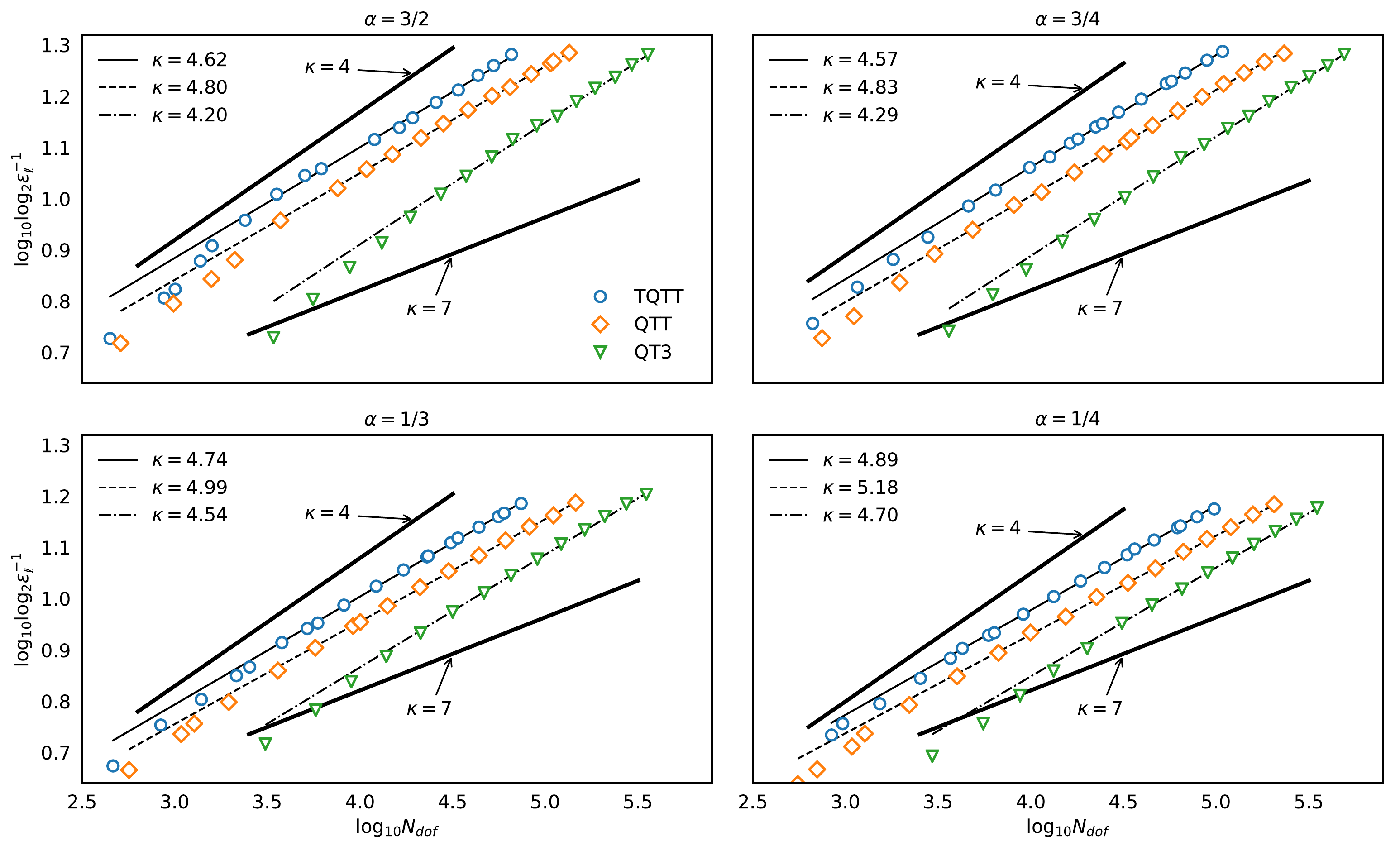}
    \caption{Effective number of degrees of freedom w.r.t. the estimated 
      relative seminorm $|\cdot|_{H^1(D)}$ error values $\varepsilon_\ell$ 
      for $\alpha = 3/2, 3/4, 1/3$ and $1/4$. 
      Reference lines with $\kappa=4$ and $\kappa=7$ correspond to the $hp$ approximation 
      and the obtained (for QTT and QT3) theoretical convergence bounds respectively.}
    \label{fig:conv3d_dof}
  \end{figure}
\begin{remark}[Approximation of singular functions by exponential sums]
 \label{rmk:ExpSums} 
To numerically evaluate 
the relative errors ${\epsilon}_\ell$ for all functions
under consideration we used the following procedure.
For each virtual mesh level $\ell$, 
we approximated the function using the exponential sums representation.
Specifically, 
we obtain the quantized tensor representations by applying the quadrature rule 
on a uniform mesh to the following integral~\cite{HK06,BM2010EXP}
\begin{equation}\label{eq:exp_int}
(\sqrt{y})^{-\beta} 
= 
\frac{1}{\Gamma (\beta/2)} 
\int_{-\infty}^{\infty} 
e^{-y e^t + \beta t/2}\, dt, \quad \beta > 0, \quad y > 0,
\end{equation}
for different values of $\beta$.
A quadrature rule on a uniform mesh applied to~\eqref{eq:exp_int} 
leads to an approximate, separated representation:
\begin{equation}\label{eq:expsums}
|x|^{-\beta} \approx \sum_{\alpha} \omega_\alpha\, 
e^{-x_1^2 e^{t_\alpha}} e^{-x_2^2 e^{t_\alpha}} e^{-x_3^2 e^{t_\alpha}}, \quad x = (x_1,x_2,x_3)
\end{equation}
where $|x| = (x_1^2+x_2^2+x_3^2)^{1/2}$.
The size of the integration interval and the number of points 
was tuned separately for each beta to ensure the desired accuracy.
In this way, an approximation for $|x|^\beta$, with $\beta \in (0,2)$, 
is found by first approximating the radial function
$|x|^{\beta - 2}$ since $\beta - 2 < 0$ and~\eqref{eq:expsums} is applicable, 
and subsequently by multiplying this function
by $|x|^2 = x_1^2 + x_2^2 + x_3^2$,
which has bounded TT ranks:
$ |x|^{\beta} = |x|^2\, |x|^{\beta - 2}$ for $\beta \in (0,2)$.
This allows us to avoid using cross approximation techniques 
which may experience stability issues at high accuracies 
(using exponential sums, we obtain approximations with relative accuracy $10^{-11}$ in $L^2$ norm).
Note that the exponential sums approach can be applied to any of the considered TT 
formats: QTT, QT3 and TQTT.
In this section, for the QTT-, QT3-formatted arrays and for
intermediate computations in TQTT we utilized the 
\texttt{ttpy} library\footnote{\url{https://github.com/oseledets/ttpy/tree/develop/tt} 
in the develop branch (latest commit: \texttt{ac03657})}.
\end{remark}
\begin{remark}[Interpolation on staggered grid]
  \label{rmk:staggered}
To conveniently assemble $|x|^{\beta - 2}$ for $\beta \in (0,2)$ using
exponential sums, while avoiding evaluation at the origin where the
function has a singularity, we approximate each $u(x_{i,j,k})$ as an average of
the neighboring points on a staggered grid. Let $h_\ell = 2^{-\ell}$ and denote by
$\widetilde{\mathcal{N}} = \{x_{i,j,k} + h_\ell/2\}_{i,j,k=0}^{2^\ell-1}$ the
nodes on the staggered grid. Then, for each $x_{i,j,k}$, the set of neighboring
points to $x_{i,j,k}$ on
the staggered grid is $\widetilde{\mathcal{N}}_{i,j,k} = \{x\in
\widetilde{\mathcal{N}}: |x-x_{i,j,k}|\leq h_\ell/2\}$. We then approximate
\[
	u(x_{i,j,k}) \approx \frac{1}{\# \widetilde{\mathcal{N}}_{i,j,k}}
	\sum_{x\in \widetilde{\mathcal{N}}_{i,j,k}}u(x),
\]
where $\# \widetilde{\mathcal{N}}_{i,j,k}$ is the number of points of
$\widetilde{\mathcal{N}}_{i,j,k}$---equal to $8$ except for points that lie on
$\partial \Omega \setminus \Gamma$.
We need therefore function evaluations only in the points of a mesh shifted 
by $h_{\ell}/2$ with respect to the original mesh $\cT^\ell$, and avoid
evaluations at the singularity.
\end{remark}

After $\uqtd^\ell$ is accurately approximated for every virtual mesh level $\ell$ 
using exponential sums, 
we reduce the number of parameters in the corresponding quantized tensor representation 
$\mathsf{qtd}$ of $\scrA^\ell\uqtd^\ell$ by using $\texttt{round}_\mathsf{qtd}$.
\subsection{QTT-FEM for eigenvalue problems with singular potential}
\label{sec:num:evp}
We apply QTT-formatted compression to the numerical solution of
the eigenvalue problem \eqref{eq:NLSch}, linearized and with singular potential $V$:
\begin{equation*}
	\left(-\frac{1}{2}\Delta - \frac{1}{|x|} \right) u(x) = \lambda\, u(x), \quad x\in\mathbb{R}^3.
\end{equation*}
This is, essentially, Schr\"odinger's equation for the hydrogen atom.
It is well-known (e.g., \cite[Chapter 10]{LL81Q}) 
that the eigenfunctions $u_{n,l,m}$ can be
enumerated by three integer quantum numbers: 
$n=1,2,\dots$---principal quantum number, 
$l=0,1,\dots,n-1$---orbital quantum number, and 
$m=-l,\dots,l$, magnetic quantum number.
The corresponding eigenvalues are $\lambda_n = 1/(2n^2)$.
We aim at approximating the $3$ smallest eigenvalues $\lambda_n$ and
their respective $N_\mathsf{ev}=14$ eigenvectors~$u_{n,m,l}$, $n=1,2,3$.

To solve the problem numerically, we replace $\mathbb{R}^3$ with a finite domain $\Omega = (-a, a)^3$, $a=100$ and impose homogeneous Dirichlet boundary conditions.
To discretize the problem, we introduce a mesh with the nodes
\[
	x_{i,j,k} = -a + (i,j,k)h_\ell, \quad h_\ell = \frac{2a}{2^{\ell}+1},
\]
where $(i,j,k)\in \{0,1,\dots,2^{\ell}+1\}^3$.
For $(i,j,k) \in \{1, \dots, 2^\ell\}^3$, we denote by $\phi_{i,j,k}^\ell$ the 
piecewise trilinear, continuous nodal Lagrange functions satisfying
\begin{equation*}
  \phi_{i,j,k}^\ell (\bx_{p,q,r}) = \delta_{ip}\delta_{jq}\delta_{kr},
\quad 
(i,j,k)\in \{1, \dots, 2^\ell\}^3, \ 
(p,q,r) \in \{0, \dots, 2^\ell+1\}^3,
\end{equation*}
and introduce the associated finite element space $\spn\{\phi^\ell_{i,j,k}\}$.
We discretize the problem using the finite element method.
The discretized eigenvalue problem reads
\begin{equation}\label{eq:eigval_discr}
 \left(\frac{1}{2}D^{\ell} + V^{\ell}\right) u^{\ell} 
    = \lambda^{\ell}\, M^{\ell} u^{\ell},  
 \quad 
 u^{\ell}\in \mathbb{R}^{2^\ell\times 2^\ell \times 2^\ell},
\end{equation}
where $D^{\ell}$ and $M^{\ell}$ are, respectively, 
stiffness and mass linear operators\footnote{Here, 
linear operators are mappings 
$A:\mathbb{R}^{2^\ell \times 2^\ell \times 2^\ell} \to \mathbb{R}^{2^\ell \times 2^\ell \times 2^\ell}$
given as $6$-dimensional arrays such that the action on $u\in \mathbb{R}^{2^\ell \times 2^\ell \times 2^\ell}$ 
is defined by 
\[
(Au)_{i,j,k} = \sum_{p,q,r=1}^{2^\ell}A_{i,j,k,p,q,r} u_{p,q,r}, \quad (i,j,k)\in \{1,\dots,2^\ell\}^3. 
\] 
}:
\begin{align*}
  &  (D^{\ell})_{i,j,k,p,q,r} = \int_{\Omega} \nabla\phi_{i,j,k}^\ell (x) \nabla\phi_{p,q,r}^\ell (x)\, dx,
  \\ & (M^{\ell})_{i,j,k,p,q,r} = \int_{\Omega} \phi_{i,j,k}^\ell(x) \phi_{p,q,r}^\ell(x)\, dx
\end{align*}
for $(i,j,k),(p,q,r)\in \{1, \dots, 2^\ell\}^3$, and $V_\ell$ is the matrix of
the FE discretization of $V(x) = -|x|^{-1}$:
 \[
 	(V^\ell)_{i,j,k,p,q,r}  = -\int_{\Omega} \frac{1}{|x|}\, \phi_{i,j,k}^\ell (x)\, \phi_{p,q,r}^\ell (x)\, d x.
 \]
assembled with the exponential sums approach.

 \begin{algorithm}[t] 
 \caption{Block eigensolver in TQTT format based on derivative-free formulas. 
The algorithm is formulated for three-dimensional arrays, 
implying that all the operations are performed within the TQTT format.}
\label{alg:eigenblock}
\begin{algorithmic}[1]
 \Require 
  Initial guess to eigenvectors $u^{\ell,0}_\alpha$ and to eigenvalues $\lambda^{\ell,0}_\alpha, \alpha = 1,\dots,N_\mathsf{ev}$, 
  tolerance parameter $\delta$.
 \Ensure 
  Approximation to eigenvectors $u^{\ell}_\alpha(\delta)$ 
  and to eigenvalues $\lambda^{\ell}_\alpha(\delta), \alpha = 1,\dots,N_\mathsf{ev}$.
 \For{$k = 1, 2,\dots$ {until converged}}
  \For{$\alpha=1,\dots,N_\mathsf{ev}$}
 \State Approximate $V^{\ell} u^{\ell,k-1}_\alpha$ using algorithm \texttt{mvrk2} from TT-Toolbox.
 \State With tolerance $\delta$ using the ADI-based solver~\cite{R19}, solve 
 \[
 \left(\frac{1}{2}D^{\ell} - \lambda^{\ell,k-1}_\alpha\, M^{\ell} \right)u^{\ell,k}_\alpha = -V^{\ell}\, u^{\ell,k-1}_\alpha. 
 \]
 \State Approximate $V^{\ell} u^{\ell,k}_\alpha$ using algorithm \texttt{mvrk2} from TT-Toolbox.
 \EndFor
   \For{$\alpha=1,\dots,N_\mathsf{ev}$}
     \For{$\beta=1,\dots,N_\mathsf{ev}$}
  \State Calculate \label{alg:line:derfree} 
  {\small $
 	F_{\alpha \beta}^{k} = \lambda_\beta^{\ell,k-1} \left<u^{\ell,k}_\alpha, u^{\ell,k}_\beta \right> + \left<u^{\ell,k}_\alpha, V^{\ell} u^{\ell,k}_\beta \right> - \left<u^{\ell,k}_\alpha, V^{\ell}u^{\ell,k-1}_\beta \right>.
 $} 
 \State Calculate {\small $G_{\alpha \beta}^{k} = \left<u^{\ell,k}_\alpha, u^{\ell,k}_\beta \right>$}.
  \EndFor
   \EndFor
 \State Solve the generalized eigenvalue problem
 \[
 	F^{k} S = G^{k} S \Lambda, \quad S\in \mathbb{R}^{N_\mathsf{ev}\times N_\mathsf{ev}}, \quad \Lambda = \diag(\lambda^{\ell,k}_1,\dots,\lambda^{\ell,k}_{N_\mathsf{ev}}) \in \mathbb{R}^{N_\mathsf{ev}\times N_\mathsf{ev}}.
 \]
 \For{$\alpha=1,\dots,N_\mathsf{ev}$}
 \State Calculate ${\widetilde u}^{\ell,k}_\alpha = \texttt{round}(\sum_{\beta=1}^{N_\mathsf{ev}}S_{\alpha\beta}\,u^{\ell,k}_\beta,\  \delta)$.
 \State Calculate $u^{\ell,k}_\alpha = {\widetilde u}^{\ell,k}_\alpha / \|{\widetilde u}^{\ell,k}_\alpha\|_2$.
  \EndFor
 \EndFor
\State Set $u^{\ell}_\alpha(\delta)=u^{\ell,k}_\alpha$, $\lambda^{\ell}_\alpha(\delta)=\lambda^{\ell,k}_\alpha$, $\alpha =1,\dots, N_\mathsf{ev}$.
\end{algorithmic}
\end{algorithm}

To solve the problem, we approximate the eigenvectors corresponding to
the smallest eigenvalues in the TQTT format that yields the smallest amount
of degrees of freedom for a given error (compared with the QTT and QT3 formats) according to Figures~\ref{fig:conv3d} and~\ref{fig:conv3d_dof}.
Note that due to the extremely refined underlying virtual meshes with
$2^{3\ell}$ internal equispaced points, 
the stiffness matrix $D^{\ell}$ becomes
severely ill-conditioned (its condition number scales as $h^{-2}_\ell$, 
i.e., it grows exponentially in $\ell$).
Besides, there arises an effect of ill-conditioning for large $\ell$ connected
purely with the structure of tensor decompositions, see~\cite{BK18PREC}.
Therefore, in order to overcome the effect of algebraic and representation
  ill-conditioning and to accurately approximate the eigenvalues and
corresponding eigenvectors of \eqref{eq:eigval_discr}, 
particular attention has to be devoted to
technical details of the computation.
The overall procedure---based on the preconditioned gradient descent
method and on the Rayleigh-Ritz procedure---is summarized in Algorithm~\ref{alg:eigenblock}.
In the algorithm, we utilize ``derivative-free'' formulas~\cite{RO16} (that
avoid multiplications by $D^{\ell}$, see Algorithm~\ref{alg:eigenblock}, line~\ref{alg:line:derfree}) for calculating the 
$N_\mathsf{ev}\times N_\mathsf{ev}$ matrix $F$ 
given by
\begin{equation}\label{eq:fockmat}
F_{\alpha \beta}^{k} 
= 
\left<u^{\ell,k}_\alpha, \left(\frac{1}{2}D^{\ell} 
+
V^{\ell}\right) u^{\ell,k}_\beta \right>, \quad \alpha,\beta=1,\dots,N_\mathsf{ev}
\end{equation}
where $u^{\ell,k}_\alpha$ are three-dimensional arrays represented in the 
TQTT format that approximate $u^{\ell}_\alpha$ on the $k$-th step 
of the iterative process and $\left<\cdot,\cdot \right>$ denotes scalar products of three-dimensional arrays:
\[
	\left<u, v\right> = \sum_{i,j,k=1}^{2^\ell} u_{i,j,k} v_{i,j,k}, \quad u,v\in\mathbb{R}^{2^\ell\times 2^\ell \times 2^\ell}.
\]
To solve the screened Poisson's equations arising in Algorithm~\ref{alg:eigenblock}, we utilize the algorithm proposed in~\cite{R19}, which is based on the alternating direction implicit method and allows to approximate the solution without conditioning issues.

Let $\lambda^{\ell}_{n,l,m} (\delta)$, $n=1,2,3$ ($N_\mathsf{ev}=14$) be the
eigenvalues obtained by using Algorithm~\ref{alg:eigenblock} with a tolerance
parameter $\delta$ and sorted by their quantum numbers.
Let us calculate an average numerical eigenvalue for fixed $n$ and $l$ 
\begin{equation}\label{eq:avlam}
	\overline{\lambda}_{n,l}^\ell (\delta) = \frac{1}{2l + 1} \sum_{m=-l}^{l} \lambda^{\ell}_{n,l,m}(\delta), \quad n=1,2,3.
\end{equation}
To each $\overline{\lambda}_{n,l}$, we associate a number of degrees of freedom, 
which is averaged in $m$ by analogy with~\eqref{eq:avlam}.
For every $\ell$, 
select the parameters $\delta^\ell$ as the largest numbers satisfying
\[
|\overline{\lambda}^{\ell}_{n,l} (\delta^\ell) - \lambda_n| 
\leq 
c_{n,l} |\lambda^{\ell}_{n,l}(\delta_{\mathrm{ref}}) - \lambda_n|,
\]
where we chose $\delta_{\mathrm{ref}} = 10^{-10}$ and where the
constants $c_{n,l}$ satisfy $c_{n,l} > 1$  (the practical choice is $c_{n,l} = 1.01$).
In Figure~\ref{fig:eigval_dof}, we present the errors 
\[
\varepsilon_\ell = \frac{|\overline{\lambda}^{\ell}_{n,l}(\delta^\ell)  - \lambda_{n}|}{|\lambda_{n}|},
\] 
in eigenvalues $\lambda_n$, $n=1,2,3$ with respect to the effective 
number of degrees of freedom for the eigenvalue problem~\eqref{eq:eigval_discr}.
 
Note that in this section, 
the implementation is done using the open source library 
TT-Toolbox\footnote{\url{https://github.com/oseledets/TT-Toolbox}}, 
which contains the implementation of the two-level QTT Tucker format~\cite{DKQTT13}. 
In three space dimensions, this format is equivalent to the TQTT format with
negligible overhead\footnote{In the two-level QTT Tucker format,
    the Tucker core of size $R_1\times R_2 \times R_3$ is additionally
    decomposed using the TT decomposition, which leads to TT-cores of sizes
    $R_1\times R_1$, $R_1\times R_2 \times R_3$, $R_3\times R_3$. So, compared
    with TQTT, the two-level QTT Tucker format leads to the storage of
    $\mathcal{O}(R_1^2 + R_3^2)$ additional degrees of freedom.}.

\begin{figure}
  \centering
\includegraphics[width=1.0\textwidth]{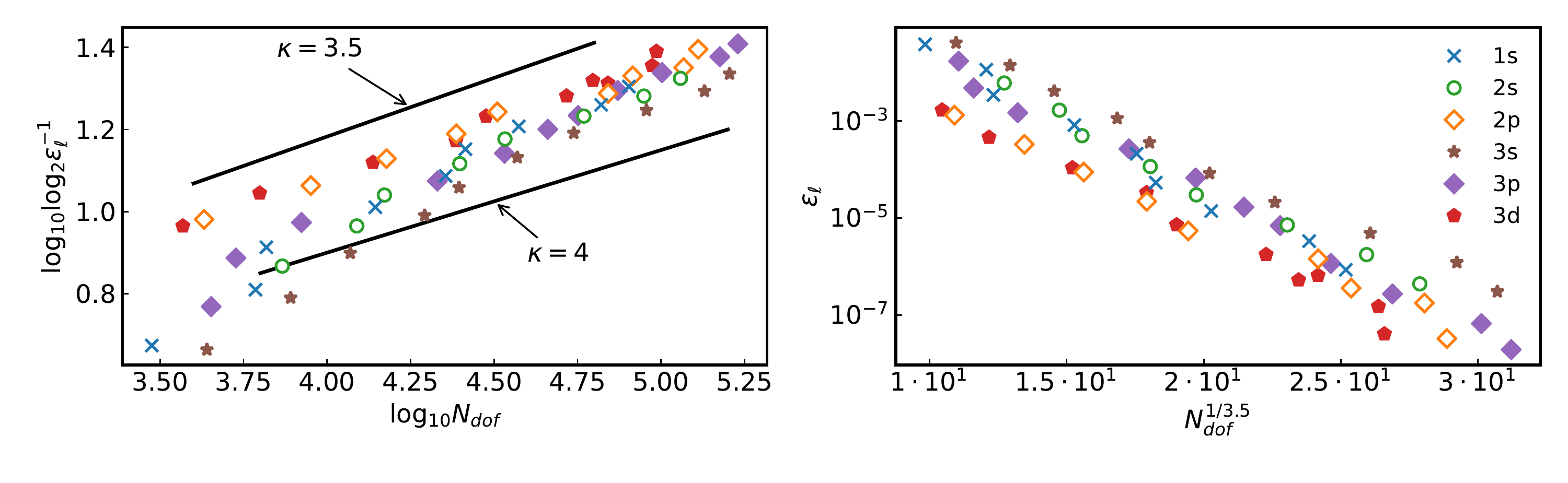}
  \caption{Relative errors $\varepsilon_\ell = |\overline{\lambda}^{\ell}_{n,l}
    (\delta) - \lambda_n|/|\lambda_n|$, $n=1,2,3$, $l=0,1,\dots,n-1$ in double
    logarithmic scale (left) and single logarithmic scale (right) with respect
    to the averaged number of degrees of freedom for the eigenvalue
    problem~\eqref{eq:eigval_discr}. In the legend, the numbers $1,2,3$ denote
    the principal quantum number $n$ and the letters $s,p,d$ correspond to $l=0,1,2$ respectively.}
  \label{fig:eigval_dof}
\end{figure}

\section{Conclusion}
\label{sec:Concl}
We considered several formats of quantized tensor-train decompositions and
proved tensor rank bounds for the approximation---with a prescribed error $\varepsilon \in (0,1)$ 
in $H^1(Q)$---of
several classes of Gevrey-smooth functions in the unit cube $Q=(0,1)^3$,
with one point singularity situated at the origin. 
In particular, we considered singularities from  Gevrey-type and 
analytic function spaces with regularity quantified by corresponding derivative bounds 
in weighted Sobolev norms, with radial weight.
For these singularities,
we extended the $hp$ approximation error analysis in \cite{Schotzau2016,Schotzau2015}
to Gevrey-regular solutions with an isolated point singularity.

We then addressed approximation rate bounds in 
three concrete quantized tensor formats: 
the quantized tensor train (QTT),
the transposed quantized TT (QT3) and the Tucker quantized TT (TQTT) format.
Our theoretical TT rank analysis indicated that the tensor ranks 
and number of degrees of freedom necessary to achieve a prescribed accuracy 
$\varepsilon \in (0,1)$ in norm $H^1(Q)$ in these format 
might depend on the format adopted in the quantized approximation
(as no lower bounds were shown, 
 these conclusions might be an artifact of our proofs). 
Numerical results, however, for several model singular functions confirmed 
the relative rank bounds for the three mentioned formats.
These results point the way to QTT structured solvers for electron structure
problems and for other PDE models where solutions exhibit isolated point singularities;
for example, continua with point defects, nonlinear Schr\"odinger and parabolic PDEs with 
blowup, to name but a few.
\emph{Format-adaptive, quantized approximations}
as were recently proposed in \cite{BGK_BlckBxHT2013,BGTreeAdapt2014,NouyNM2019}
might result in further quantitative improvement of TT ranks for the 
presently considered examples.

While our analysis focused only on functions with singular support consisting of
one isolated point, we emphasize that corresponding rank bounds are obtained for 
functions whose singular support consists of a finite number of (well-separated)
isolated points; the present results imply the same rank bounds as shown here
also for such functions, albeit with the constants in the estimates strongly
depending on the separation of the singular supports.
With further analysis,
the present results extend to other forms of singularities, such as line and face
singularities. 
The details on this shall be reported in \cite{MRS_preparation}.

\appendix
\normalsize
\section{$hp$ approximation in weighted Gevrey classes}
\label{sec:gevrey}
We prove, in this section, the exponential convergence of the $hp$
approximations to functions in the weighted Gevrey class 
$\cJ^{\varpi}_\gamma(Q; C, A, \gev)$ for $C,A>0$, $\gamma>3/2$, $\gev\geq 1$.
Specifically, this corresponds to functions $u\in H^1(Q)$ such that
\begin{equation}
  \label{eq:Gev-1}
  \sum_{\alpham=s}\| r^{s - \gamma} \dalpha u \|_{L^2(Q)} 
  \leq 
  C A^s(s!)^\gev \qquad \text{ for all }s\in \mathbb{N}.
\end{equation}
We recall that the $hp$ space is defined as
\begin{equation*}
  \Xhpell  = \{ v\in H^1(Q): v_{|_{K}} \in \mathbb{Q}_{p}(K), \text{ for all } K\in \cG^\ell\}.
\end{equation*}
The central (novel) result of this section is 
the existence---for Gevrey-regular functions in $\cJ^{\varpi}_\gamma(Q)$---of 
an exponentially convergent, $H^1(Q)$ conforming $hp$-projector
on $1$-irregular geometric meshes of hexahedra, as
stated in the following proposition.
\begin{proposition}
  \label{prop:hp-error}
Let $\gamma \geq \gamma_0 > 3/2$ and $\gev \geq 1$. 
Then, there exists $\Pihpell :
\cJ^\infty_\gamma(Q)\to \Xhpell$ 
such that for all $u\in \cJ^{\varpi}_\gamma(Q; C, A, \gev)$ 
there exist constants $C_{\mathsf{hp}}$ and $b_{\mathsf{hp}}$ such that
\begin{equation}
  \label{eq:hp-error}
  \| u - \Pihpell u \|_{H^1(Q)} \leq C_{\mathsf{hp}} e^{-b_{\mathsf{hp}}\ell}, \qquad \ell \in \mathbb{N},
\end{equation}
provided the uniform polynomial degree is 
$p= c_0 \ell^\gev$ for some constant $c_0>0$ which is independent of $\ell$. 
The constants $C_{\mathsf{hp}}$, $b_{\mathsf{hp}}$ depend on 
the constants $C, A$, and $\gev$ in $\cJ^{\varpi}_\gamma(Q)$.
In terms of  $\Ndof= \dim(\Xhpell)\simeq  \ell^{3\gev+1}$, \eqref{eq:hp-error} reads
\begin{equation}\label{eq:hp-errorN}
\| u - \Pihpell u \|_{H^1(Q)} 
\leq C_{\mathsf{hp}} \exp\left(-\hat{b}_{\mathsf{hp}}\Ndof^{1/(1+3\gev)}\right).
\end{equation}

\end{proposition}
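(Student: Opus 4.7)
The plan is to construct $\Pihpell$ by combining an elementwise polynomial (quasi-)interpolant on the geometric mesh $\cG^\ell$ with $H^1$-conforming face-jump liftings, mirroring the analytic construction of \cite{Schotzau2015} but extended to Gevrey-$\gev$ regularity and to $1$-irregular axiparallel hexahedra (rather than regular tetrahedra as in \cite{Feischl2018}). First, I would split
\begin{equation*}
\| u - \Pihpell u \|_{H^1(Q)}^2
=
\| u - \Pihpell u \|_{H^1(K^\ell_{000,0})}^2
+
\sum_{j=1}^\ell \sum_{K \in \cL^\ell_j} \| u - \Pihpell u \|_{H^1(K)}^2.
\end{equation*}
On the singular element $K^\ell_{000,0}$, a constant (or low-order) approximant produces an error bounded via the weighted-norm definition by $\|u\|_{H^1(K^\ell_{000,0})} \lesssim 2^{-(\gamma-3/2)\ell}\|u\|_{\cJ^1_\gamma(K^\ell_{000,0})}$, using $\gamma > 3/2$; this algebraic decay in $2^{-\ell}$ is absorbed into the target exponential.

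For each non-singular element $K^\ell_{n,j}$ with $j\geq 1$, I would pull back to the reference cube $\widehat K = (0,1)^3$ by an affine map of linear scale $h_j \simeq 2^{-\ell+j}$. Since $r \simeq 2^{-\ell+j}$ on $K^\ell_{n,j}$, the radial weight in \eqref{eq:Gev-1} precisely cancels the scaling factor from the chain rule, so the pullback $\widehat u$ satisfies uniform (in $\ell, j, n$) Gevrey bounds
\begin{equation*}
\|\partial^\alpha \widehat u\|_{L^2(\widehat K)} \leq C_1 B^{|\alpha|}(|\alpha|!)^\gev, \qquad \alpha\in\mathbb{N}_0^3.
\end{equation*}
A tensor-product polynomial quasi-interpolation of degree $p$ on $\widehat K$, built from the one-dimensional Gevrey-type truncation bounds used in \cite{Feischl2018}, then yields an $H^1(\widehat K)$ error of the form $C\exp(-\mathfrak{b}\, p^{1/\gev})$. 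Rescaling back to $K^\ell_{n,j}$ and tracking the explicit $h_j$-factors, the local $H^1$-error inherits the same rate up to multiplicative constants independent of $\ell, j, n$. Summing over the at most $\#\cN = 7$ elements per layer and over $j=1,\dots,\ell$ gives a total contribution bounded by $C\,\ell\, \exp(-\mathfrak{b}\, p^{1/\gev})$.

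The elementwise approximants obtained above do not a priori match across the $1$-irregular faces of $\cG^\ell$; I would restore $H^1$-conformity by polynomial face-jump liftings as in \cite{Schotzau2015}. The jumps are themselves polynomials of degree $\leq p$ whose $L^2(\partial K)$ norms are controlled, through a scaled trace inequality, by the local $H^1$-approximation errors established in Step 2. A polynomial-preserving lifting into the adjacent element's $\mathbb{Q}_p$-space is bounded with at most algebraic-in-$p$ loss, which is absorbed in the exponential by a small adjustment of constants. Choosing the uniform degree $p = c_0 \ell^\gev$ then gives $p^{1/\gev}\simeq \ell$ and hence \eqref{eq:hp-error}, while $\dim(\Xhpell) \simeq \ell\, p^3 \simeq \ell^{3\gev+1}$ yields \eqref{eq:hp-errorN}.

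The principal obstacle lies in Step 2, the Gevrey-adapted reference-element approximation with uniform-in-layer constants. The weighted bound $\|r^{|\alpha|-\gamma}\dalpha u\|_{L^2(K^\ell_{n,j})} \leq C A^{|\alpha|}(|\alpha|!)^\gev$ must be passed through the affine pullback with constants $C_1, B$ independent of the layer, which requires a precise understanding of how $r/h_j$ behaves on each $K^\ell_{n,j}$ (uniformly bounded above and below for $j\geq 1$), and the Gevrey factor $(|\alpha|!)^\gev$ forces the approximation theory to be sharp at the subexponential scale $\exp(-\mathfrak{b}\,p^{1/\gev})$ rather than the analytic scale $\exp(-\mathfrak{b}\,p)$. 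A secondary difficulty is verifying that the face-lifting construction of \cite{Schotzau2015} on $1$-irregular hexahedral faces remains compatible with the Gevrey exponent, which will most likely require either a tensor-product one-dimensional reformulation of the lifting or a tracking of its polynomial $p$-dependence sufficient to be absorbed into the modified exponential rate.
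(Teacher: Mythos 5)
Your proposal follows essentially the same route as the paper: a two-stage construction consisting of an elementwise tensor-product (quasi-)interpolant on the geometric mesh (constant on the terminal element, handled via Hardy/weighted-norm bounds; scaled reference-element Gevrey estimates of the form $C\exp(-\mathfrak{b}p^{1/\gev})$ on the interior layers, using the factorial bound from \cite{Feischl2018}), followed by the polynomial face-jump liftings of \cite{Schotzau2015} whose algebraic-in-$p$ loss is absorbed into the exponential, and finally the choice $p\simeq\ell^\gev$ with $\dim(\Xhpell)\simeq\ell p^3$. The paper instantiates your ``Gevrey-adapted reference-element approximation'' concretely via the projector $\hPi_p=\hpi_p\otimes\hpi_p\otimes\hpi_p$ and the bound $\|v-\hPi_p v\|^2_{\Hmix(\hK)}\leq C\Psi_{p-1,s-1}\|v\|^2_{H^{s+5}(\hK)}$ with $s\simeq(p/c)^{1/\gev}$, but the argument is the same.
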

The rest of the section will be devoted to an overview of the construction of the conforming
projector $\Pihpell$. This projector has already been exploited and analyzed
in detail, e.g., in \cite{Schotzau2015,Schotzau2017}; here, we wish to sketch
its construction for the sake of self-containedness and to provide the necessary
detail of the treatment of non-analytic Gevrey-type estimates (i.e., of the
cases where $\gev>1$), which requires some minor modification 
with respect to the setting of \cite{Schotzau2015,Schotzau2017}.
For positive integers $p$ and $s$ such that $1\leq s \leq p$, 
we write $\Psi_{p,s} = (p-s)! / (p+s)!$.
\subsection{Discontinuous projector}
\label{sec:hp-disc}
%
We start by introducing a nonconforming projector.
\subsubsection{Local projector}
\label{sec:LocProj}
We denote the reference interval by $I=(-1, 1)$ and 
the reference cube by $\hK = (-1, 1)^3$. 
We write also $\Hmix(\hK) = H^2(I)\otimes H^2(I)\otimes H^2(I)$, 
where $\otimes$ denotes the Hilbertian tensor product. 
Let $p\geq 3$: as constructed in \cite[Appendix A]{Costabel2005}, 
there exist univariate projectors 
$\hpi_p : H^2(I) \to \mathbb{P}_p(I)$
such that
\begin{equation}
  \label{eq:onedim-prop}
 \left( \pi_p v \right)^{(j)} (\pm 1) = v^{(j)} (\pm 1), \qquad j=0,1,
\end{equation}
see \cite[Lemma 4.1]{Schotzau2015} (the projector $\pi_p$ is denoted $\pi_{p,2}$ there). 
Then, 
the Hilbertian tensor product projector given by
\begin{equation} \label{eq:hPi}
  \hPi_p = \hpi_p \otimes \hpi_p \otimes \hpi_p
\end{equation}
has the following property.
\begin{lemma}{\cite[Remark 5.5]{Schotzau2013b}}
  \label{lemma:ref-proj}
  For every $p\geq 3$ exists a projector 
  $\hPi_p:\Hmix(\hK) \to \mathbb{Q}_p(\hK)$ such that for all $v\in \Hmix(\hK)$
  and all integer $s$ such that $2\leq s \leq p$
  \begin{equation*}
    \| v - \hPi_p v\|_{\Hmix(\hK)}^2 \leq C \Psi_{p-1, s-1} \| v \|^2_{H^{s+5}(\hK)},
  \end{equation*}
  with $C$ independent of $p, s$, and $v$.
\end{lemma}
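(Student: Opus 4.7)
The plan is to construct $\hPi_p$ as the tensor product \eqref{eq:hPi} of the univariate projector $\hpi_p$ from \cite[Lemma 4.1]{Schotzau2015}, which satisfies the one-dimensional bound $\|w - \hpi_p w\|_{H^2(I)}^2 \leq C \Psi_{p-1,s-1} \|w\|_{H^{s+1}(I)}^2$ for $2\leq s \leq p$, together with a $p$-independent $H^2(I)$-stability estimate $\|\hpi_p w\|_{H^2(I)} \leq C\|w\|_{H^2(I)}$. Since $\hpi_p$ acts only on a single coordinate, it commutes with partial derivatives in the other two coordinates, which is the property that makes the tensor-product argument go through.

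Writing $\hpi_p^{(i)}$ for the action of $\hpi_p$ in the $i$-th coordinate, I would then decompose the error by the telescoping identity
\begin{equation*}
I - \hpi_p^{(1)}\hpi_p^{(2)}\hpi_p^{(3)}
= (I - \hpi_p^{(1)}) + \hpi_p^{(1)}(I - \hpi_p^{(2)}) + \hpi_p^{(1)}\hpi_p^{(2)}(I - \hpi_p^{(3)})
\end{equation*}
and estimate the $\Hmix(\hK)$ norm of each summand separately. For the first term, after applying $\partial^\alpha$ with $\alpha\in\{0,1,2\}^3$ and using commutation of $y$- and $z$-derivatives with $I-\hpi_p^{(1)}$, the univariate bound applied fiberwise in the $x$-variable and integrated in $(y,z)\in I^2$ produces, upon summing over $\alpha$, control of an anisotropic mixed norm of $v$ of orders $(s+1,2,2)$ weighted by $C\Psi_{p-1,s-1}$. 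This anisotropic norm is then absorbed into the isotropic $\|v\|_{H^{s+5}(\hK)}^2$, precisely because $(s+1)+2+2=s+5$, which is where the regularity index in the statement comes from. The second and third summands are handled in the same manner, with the $p$-uniform $H^2(I)$-stability of $\hpi_p$ invoked fiberwise to absorb the compositions $\hpi_p^{(1)}$ and $\hpi_p^{(1)}\hpi_p^{(2)}$ without degrading the approximation rate.

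The main obstacle I anticipate is the anisotropic-to-isotropic Sobolev book-keeping: the fiberwise univariate estimate naturally yields the mixed-anisotropic norm of orders $(s+1,2,2)$, and one must carefully trade it for the isotropic $H^{s+5}$ norm appearing in the statement; this trade-off accounts for the, at first glance, generous shift from $s$ to $s+5$. A secondary technical point is verifying the $p$-uniform $H^2(I)$-stability of $\hpi_p$ needed in the last two summands; this follows from the explicit construction in \cite[Appendix A]{Costabel2005} and the interpolation-preservation property \eqref{eq:onedim-prop}, which together ensure that the spectral-type projector does not blow up in the $H^2$ norm as $p\to\infty$.
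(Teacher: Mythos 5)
The paper offers no proof of this lemma: it is imported verbatim as a citation to \cite[Remark 5.5]{Schotzau2013b}, so there is no in-paper argument to compare against. Your reconstruction --- tensorizing the univariate projector $\hpi_p$ as in \eqref{eq:hPi}, telescoping $I-\hpi_p^{(1)}\hpi_p^{(2)}\hpi_p^{(3)}$, exploiting that $\hpi_p^{(i)}$ commutes with derivatives in the other coordinates, applying the univariate error bound fiberwise, and then trading the anisotropic norm of orders $(s+1,2,2)$ for the isotropic $H^{s+5}(\hK)$ norm --- is precisely the mechanism behind the cited result, and your accounting $(s+1)+2+2=s+5$ correctly explains the regularity shift. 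Two points deserve a little more care than your sketch gives them. First, the $p$-uniform $H^2(I)$-stability of $\hpi_p$ does \emph{not} follow from the univariate error bound (taking $s=2$ there only yields boundedness from $H^3(I)$ into $H^2(I)$); it must be read off the explicit construction in \cite[Appendix A]{Costabel2005}, namely the $L^2(I)$-contractivity of the Legendre truncation of $w''$ together with one-dimensional trace bounds controlling the endpoint-matching corrections enforced by \eqref{eq:onedim-prop}. Second, the estimate as stated is for $v\in\Hmix(\hK)$ but its right-hand side is finite only for $v\in H^{s+5}(\hK)$, so the fiberwise application of the univariate bound should be justified by Fubini for such $v$ (the claim being vacuous otherwise). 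With these two points filled in, your argument is sound.
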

For all $K\in \cG^\ell$, we introduce the affine
transformation from the reference element to $K$ 
\begin{equation*}
  \Phi_K : \hK\to K\quad \text{such that}\quad\Phi_K(\hK) = K;
\end{equation*}
it follows
that for $v$ defined on $K$ such that $v\circ \Phi_K \in \Hmix(\hK)$ 
we can define the local projector on $K$ so that
\begin{equation}
  \label{eq:PiK}
  \Pi_p^K v = \left(\hPi_p  (v\circ \Phi_K) \right)\circ \Phi_K^{-1}.
\end{equation}
The projector $\Pi_p^K$ is continuous across regular matching faces.
\begin{lemma}{\cite[Lemma 4.2]{Schotzau2015}}
  \label{lemma:reg-cont}
 Let $K_1, K_2$ be two axiparallel cubes that share one regular face $F$ (i.e., $F$ is a
 full face of both $K_1$ and $K_2$). Then, for $v\in
 H^6(\mathrm{int}(\overline{K}_1\cup \overline{K}_2))$ the piecewise
 polynomial
 \begin{equation*}
   \Pi^{K_1\cup K_2}_p v = 
   \begin{cases}
     \Pi_p^{K_1} v &\text{in }K_1,\\
     \Pi_p^{K_2} v &\text{in }K_2
   \end{cases}
 \end{equation*}
 is continuous across $F$.
\end{lemma}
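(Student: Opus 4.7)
The plan is to exploit the tensor product structure of $\hat{\Pi}_p$ defined in \eqref{eq:hPi} together with the boundary interpolation property \eqref{eq:onedim-prop} of the univariate projector $\hat{\pi}_p$. Since $K_1$ and $K_2$ are axiparallel cubes sharing a full face $F$, after an affine change of coordinates we may assume that $F$ corresponds to the face $\{\hat{x}_1=1\}$ of $\hat{K}$ under $\Phi_{K_1}$ and to the face $\{\hat{x}_1=-1\}$ of $\hat{K}$ under $\Phi_{K_2}$, and that the induced parameterizations of $F$ by $(\hat{x}_2,\hat{x}_3)\in(-1,1)^2$ coincide (this is where the axiparallel, regular-matching hypothesis is used).

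First, I would write the local projection in tensor product form as
\[
\hat{\Pi}_p(v\circ\Phi_{K_i}) \;=\; \bigl(\hat{\pi}_p^{(1)}\otimes \hat{\pi}_p^{(2)}\otimes \hat{\pi}_p^{(3)}\bigr)(v\circ\Phi_{K_i}),\qquad i=1,2,
\]
where the superscript indicates the variable in which the univariate projector acts. Evaluating the $\hat{x}_1$-factor at $\hat{x}_1=\pm1$ and invoking \eqref{eq:onedim-prop} (the case $j=0$) yields
\[
\bigl(\hat{\Pi}_p(v\circ\Phi_{K_1})\bigr)\bigr|_{\hat{x}_1=1} \;=\; \bigl(\hat{\pi}_p^{(2)}\otimes \hat{\pi}_p^{(3)}\bigr)\!\bigl((v\circ\Phi_{K_1})\bigr|_{\hat{x}_1=1}\bigr),
\]
and analogously for $K_2$ at $\hat{x}_1=-1$.

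Next, the key observation: because $K_1$ and $K_2$ share the face $F$ regularly and are axiparallel, the two parameterizations of $F$ via $\Phi_{K_1}|_{\hat{x}_1=1}$ and $\Phi_{K_2}|_{\hat{x}_1=-1}$ are identical as maps $(-1,1)^2\to F$. Consequently the traces coincide,
\[
(v\circ\Phi_{K_1})\bigr|_{\hat{x}_1=1} \;=\; (v\circ\Phi_{K_2})\bigr|_{\hat{x}_1=-1},
\]
viewed as functions on $(-1,1)^2$. Applying the same bivariate operator $\hat{\pi}_p^{(2)}\otimes \hat{\pi}_p^{(3)}$ to the same input produces the same output, so that after composing back with $\Phi_{K_i}^{-1}$ via \eqref{eq:PiK} the two polynomial traces $\Pi_p^{K_1}v|_F$ and $\Pi_p^{K_2}v|_F$ agree pointwise. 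This is exactly the continuity claim of the lemma.

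The main conceptual obstacle is really a bookkeeping one: verifying that the two affine parameterizations of the shared face $F$ are identical (not merely equal up to an orientation-reversing reflection), so that the boundary-preservation property of the univariate projector transfers cleanly through the tensor product. Since the mesh consists of axiparallel hexahedra with a consistent choice of reference element, this matching holds by construction, and no further analytical work is required; everything else follows by direct computation using the definition of $\hat{\Pi}_p$ and property \eqref{eq:onedim-prop}. No regularity beyond $v\in H^6(\mathrm{int}(\overline{K}_1\cup\overline{K}_2))$ is needed because the argument is purely algebraic once the projector is applied.
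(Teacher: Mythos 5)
Your argument is correct and is exactly the standard tensor-product trace argument behind the cited result: the paper itself states this lemma by reference to \cite[Lemma 4.2]{Schotzau2015} without reproducing a proof, and your use of the endpoint-interpolation property \eqref{eq:onedim-prop} (case $j=0$) to reduce the trace of $\hPi_p$ on the shared face to the same bivariate projector applied to the same face trace is precisely how that proof goes. The only points worth stating explicitly are that $H^6(\hK)\subset \Hmix(\hK)$ so the projector and the face traces are well defined, and that for axiparallel cubes sharing a full face the two reference parameterizations of $F$ indeed coincide — both of which you have addressed.
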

\begin{remark}
\label{remark:nullface}
By \eqref{eq:onedim-prop} and \eqref{eq:hPi}, 
if a function $v$ on $K$ such that $v\circ\Phi_K\in \Hmix(\hK)$ 
vanishes on a face $F\subset \partial K$, 
then we also have $\left( \Pi^K_pv\right)_{|_F} = 0$.
\end{remark}
\subsubsection{Globally discontinuous $hp$ projector}
\label{sec:GlobDishp}
Starting from the local, elementwise projector \eqref{eq:PiK},
a global, \emph{discontinuous} projection operator $\Pihpelld$ is defined
in the usual way:
with the non-conforming $hp$-space 
\begin{equation*}
  \Xhpelld  = \prod_{K\in \cG^\ell} \mathbb{Q}_{p}(K) 
  = \{ v\in L^2(Q): v_{|_{K}} \in \mathbb{Q}_{p}(K), \text{ for all } K\in \cG^\ell\};
\end{equation*}
for all $K\in \cG$ and for $v\in \cJ^\infty_\gamma(Q)$, with $\gamma>3/2$,
\begin{equation}
  \label{eq:Pihpelld}
  \Pihpelld{|_{K}} v{|_{K}} =
  \begin{cases}
    v(0) &\text{if }K\in \cL^\ell_0,\\
    \Pi_p^K v &\text{otherwise}.
  \end{cases}
\end{equation}
Note that $v(0)$ is well defined for $v\in \cJ^1_{\gamma}(Q)$ if $\gamma > 3/2$,
see \cite[Lemma 7.1.3]{Kozlov1997}; hence, \emph{a fortiori}, $\Pihpelld :
\cJ^\infty_\gamma(Q) \to \Xhpelld$ is well defined if $\gamma>3/2$. 
\begin{lemma}
  \label{lemma:hp-d-appx}
  Let $u\in \cJ^\varpi_\gamma(Q; C_u, A_u, \gev)$. 
  Then, if $p\simeq \ell^\gev$, there exist constants $C, b>0$ such that
  \begin{equation*}
    \sum_{K\in \cG^\ell} \frac{1}{h_K^2}\| u - \Pihpelld u\|^2_{L^2(K)} + \| \nabla \left(  u - \Pihpelld u\right) \|^2_{L^2(K)} \leq C e^{-b\ell},
  \end{equation*}
  with $\dim\left( \Xhpelld \right)\simeq \ell^{3\gev+1}$.
\end{lemma}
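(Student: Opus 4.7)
My plan is to split the sum over $\cG^\ell$ into the contribution from the central element $K_{000,0} \in \cL^\ell_0$ (where $\Pihpelld u = u(0)$) and the contributions from the shells $\cL^\ell_j$, $j \geq 1$, where $\Pihpelld u|_K = \Pi^K_p u$. On $K_{000,0}$ one has $h_K \simeq 2^{-\ell}$ and $r \lesssim 2^{-\ell}$, so writing $\nabla u = r^{\gamma-1} \cdot r^{1-\gamma}\nabla u$ and using the radial weight yields $\|\nabla u\|^2_{L^2(K_{000,0})} \lesssim 2^{-(2\gamma-2)\ell}|u|^2_{\cK^1_\gamma(K_{000,0})}$, which decays exponentially in $\ell$ since $2\gamma-2>1$. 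For the $h_K^{-2}\|u-u(0)\|^2_{L^2}$ term, I would combine continuity of $u$ at the origin (guaranteed by $\gamma>3/2$, cf.\ \cite[Lemma~7.1.3]{Kozlov1997}) with a Poincaré-type estimate on $K_{000,0}$ and the same weighted bound.

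For $K \in \cL^\ell_j$ with $j \geq 1$, I would pull back to $\hK$ via the affine map $\Phi_K$ (with $h_K \simeq 2^{j-\ell}$), apply Lemma \ref{lemma:ref-proj} to $\hPi_p(u\circ\Phi_K)$, and scale back. Standard scaling yields
\[
h_K^{-2}\|u - \Pi^K_p u\|^2_{L^2(K)} + |u - \Pi^K_p u|^2_{H^1(K)}
\leq C\, h_K\, \Psi_{p-1,s-1} \sum_{|\alpha|\leq s+5} h_K^{2|\alpha|-3}\|\dalpha u\|^2_{L^2(K)}.
\]
The key observation is that on $K \in \cL^\ell_j$ we have $r \simeq h_K \simeq 2^{j-\ell}$, so $\|\dalpha u\|^2_{L^2(K)} \lesssim 2^{2(\gamma-|\alpha|)(j-\ell)} |u|^2_{\cK^{|\alpha|}_\gamma(K)}$, and the factor $h_K^{2|\alpha|-3}$ combines with $r^{2(\gamma-|\alpha|)}$ to produce $2^{(2\gamma-3)(j-\ell)}$ which is \emph{independent of $|\alpha|$}. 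Combined with the extra $h_K$, this gives a geometric factor $2^{(2\gamma-2)(j-\ell)}$ that is summable over $j$ since $\gamma>1$. Invoking the Gevrey bound $|u|_{\cK^k_\gamma(Q)} \leq C_u A_u^k (k!)^\gev$ and retaining the largest term in the sum over $|\alpha| \leq s+5$ (the earlier terms are absorbed by a change of constants) yields
\[
\sum_{j \geq 1} \sum_{K \in \cL^\ell_j} \!\bigl(h_K^{-2}\|u-\Pi^K_p u\|^2_{L^2(K)} + |u-\Pi^K_p u|^2_{H^1(K)}\bigr) \leq C\, \Psi_{p-1,s-1}\, A_u^{2s}\,(s!)^{2\gev}.
\]

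Finally, I would optimize in $s$. Since $\Psi_{p-1,s-1} = (p-s)!/(p+s)!$ behaves like $p^{-2s}$ for $s\ll p$ by Stirling, the right-hand side is of order $(A_u s^\gev / p)^{2s}$ up to polynomial factors. Choosing $s = c\,p^{1/\gev}$ for some $c > 0$ small enough that $A_u c^\gev \leq 1/2$ produces a bound of the form $\exp(-b\, p^{1/\gev})$; the hypothesis $p \simeq \ell^\gev$ then delivers $\exp(-b\ell)$, as claimed, while $\dim(\Xhpelld) \simeq \#\cG^\ell \cdot p^3 \simeq \ell \cdot \ell^{3\gev}$. The main obstacle is the delicate bookkeeping in the scaling argument — verifying that the $|\alpha|$-dependence cancels \emph{exactly} so that the weighted estimate is uniform in $|\alpha|$, and carrying out the Stirling analysis carefully enough to see that in the Gevrey case $\gev>1$ the polynomial degree must scale as $p\simeq\ell^\gev$ (rather than $p\simeq\ell$, which suffices in the analytic case $\gev=1$) in order to compensate for the faster-growing factorials $(s!)^\gev$.
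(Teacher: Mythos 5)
Your proposal follows essentially the same route as the paper's proof: the same split into the terminal element $K_{000,0}$ and the interior shells $\cL^\ell_j$, the same scaling of Lemma \ref{lemma:ref-proj} combined with the observation that $r\simeq h_K$ on each shell renders the weighted factor independent of $\alpham$ and geometrically summable in $j$, and the same choice $s\simeq p^{1/\gev}\simeq \ell$ (the paper simply cites the corresponding factorial estimate from \cite{Feischl2018} instead of redoing the Stirling computation). The only cosmetic difference is on the terminal element, where the paper makes your ``Poincar\'e-type estimate'' precise via Hardy's inequality $\|r^{-1}(u-u(0))\|_{L^2(K)}\lesssim \|\nabla u\|_{L^2(K)}$, which is the correct tool since one must subtract the point value $u(0)$ rather than the mean.
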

\begin{proof}
  The proof follows along the lines of the proof of \cite[Proposition
  5.13]{Schotzau2013b}. 
Denote $\eta = u - \Pihpelld u$ and 
$N_K[v]^2 = \| v \|^2_{L^2(K)} / h_K^2 + \| \nabla v \|^2_{L^2(K)}$.

  We start by considering $K\in \cL^\ell_j$ for $j\geq 1$ and write $d_K =
  \dist(K, (0,0,0))$.
  By Lemma
  \ref{lemma:ref-proj}, scaling inequalities (see \cite[Equations
  (5.26)--(5.31)]{Schotzau2013b}), and the regularity of $u$ (see \eqref{eq:Gev-1}),
  \begin{align*}
    N_K[\eta]^2
    &\leq C \Psi_{p-1, s-1}\sum_{s +1 \leq\alpham \leq s+5} d_K^{2\alpham -2} \|\dalpha u\|^2_{L^2(K)}\\
    &\leq C \Psi_{p-1, s-1}\sum_{s +1 \leq\alpham \leq s+5} d_K^{2\gamma-2} \|r^{\alpham-\gamma}\dalpha u\|^2_{L^2(K)}\\
    &\leq C \Psi_{p-1, s-1} 2^{-2(\ell-j)\gamma+2} A_u^{2(s+5)} \left((s+5)!  \right)^{2\gev}.
  \end{align*}
  Then,
  using the fact that for
  sufficiently large $s$ and $c = 2A_u+1$,
  \begin{equation*}
    \Psi_{p-1, s-1} A_u^{2s} ((s+5)!)^{2\gev} 
    \leq 
    C \left( \frac{2A_u}{2A_u+1} \right)^{2c^{-1/\gev}p^{1/\gev}},
  \end{equation*}
see \cite[Equation (42)]{Feischl2018},
choosing $s = (p/c)^{1/\gev}\simeq \ell$, with $c>1$ sufficiently large,
and summing over all mesh layers not touching the origin (``interior mesh layers''), 
we obtain that there exist $C_1, b_1 >0$ such that for every $\ell\geq 1$ holds
  \begin{equation}
    \label{eq:lifting}
    \begin{aligned}
    \sum_{j=1}^\ell\sum_{K\in \cL^\ell_j} N_K[\eta]^2
    & \leq C \Psi_{p-1, s_-1} A_u^{2(s+5)} \left((s+5)!  \right)^{2\gev}
      \leq C e^{-2 b s} \leq C_1 e^{-2b_1\ell}.
    \end{aligned}
  \end{equation}

  We now consider the element $K \in \cL^\ell_0$, i.e., $K=(0, 2^{-\ell})^3$. By
  Hardy's inequality and choosing $\gamma>1$,
  \begin{align*}
    N_K[\eta ]^2 = \frac{1}{h_K^2}\| u - u(0)\|^2_{L^2(K)} + \| \nabla   u  \|^2_{L^2(K)}
    &\leq \| r^{-1}( u - u(0))\|^2_{L^2(K)} + \| \nabla   u  \|^2_{L^2(K)}\\
    &\leq C \| \nabla   u  \|^2_{L^2(K)}\\
    &\leq C h_K^{2(\gamma-1)}\| r^{1-\gamma}\nabla   u  \|^2_{L^2(K)}\\
    &\leq C 2^{-2(\gamma-1)\ell}\|   u  \|^2_{\cJ^1_{\gamma}(Q)} \leq C_2 e^{-b_2\ell}.
  \end{align*}
  Finally, the dimension of the $\mathbb{Q}_p(K)$ space in each element
  $K\in \cG^\ell$ is given by $(1+p)^3$; 
  since each non-terminal mesh layer $\cL^\ell_j$, $j>0$,
  contains $7$ elements, we have that $\dim(\Xhpelld) = (1+p)^3(1+7\ell)$.
  The observation that $p\simeq \ell^\gev$ concludes the proof.
\end{proof}
\subsection{Conforming $hp$ approximation} 
\label{sec:Confhp}
A conforming $hp$ approximation is obtained 
by locally lifting the polynomial face jumps
of the discontinuous, piecewise polynomial approximation. 
Their construction is detailed in 
\cite[Section 5.3]{Schotzau2015}.
\subsubsection{Edge and face liftings}
\label{sec:EdFacLft}
\begin{figure}
  \centering
  \includegraphics[width=\textwidth]{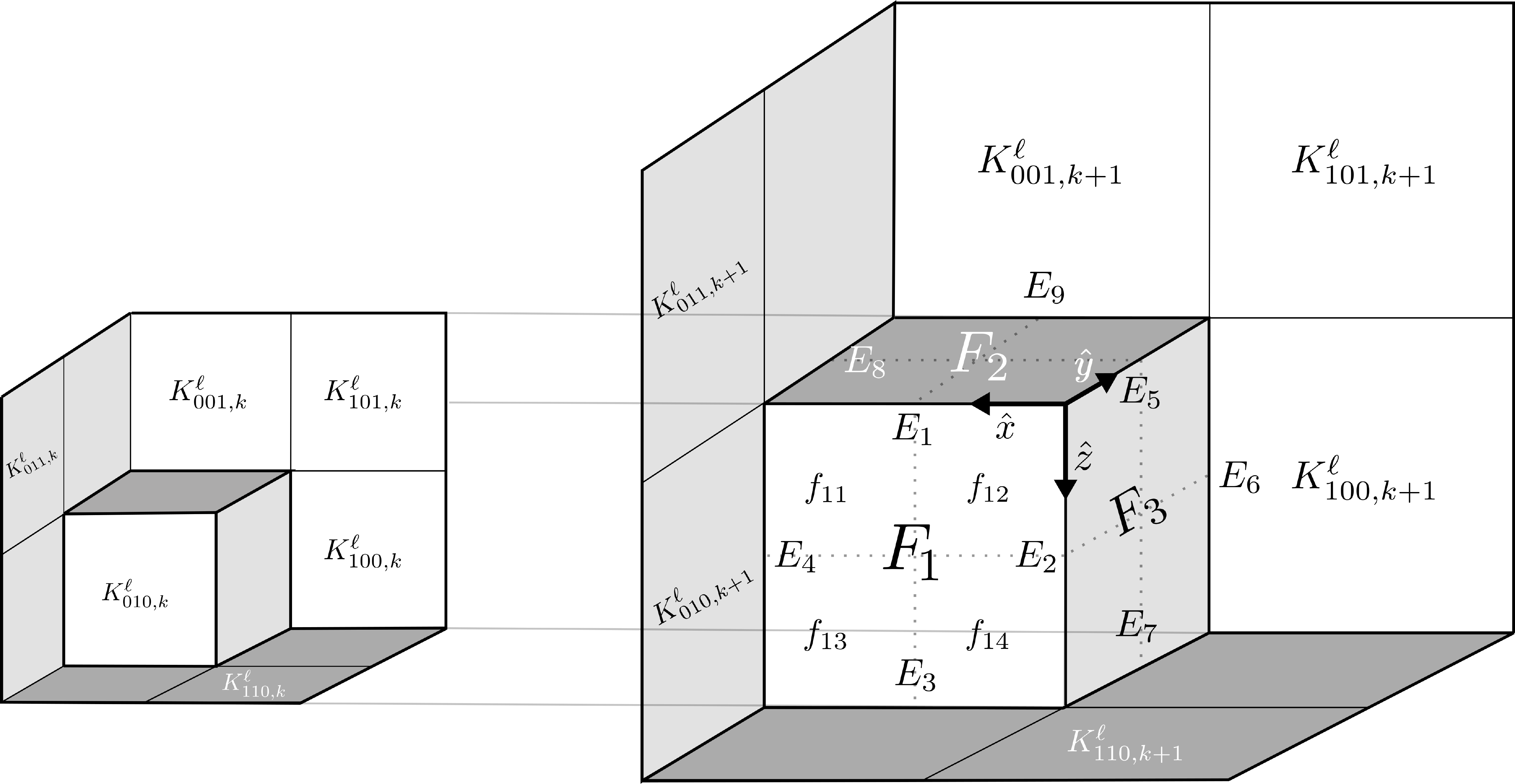}
  \caption{Separation of mesh levels $\cL^\ell_{k}$ (elements moved to the left)
    and $\cL^\ell_{k+1}$, with the interfaces
    $F_1, F_2, F_3$  and edges $E_1, \dots, E_9$ marked. 
    The local 
    system of coordinates is given by $\hat{x}, \hat{y}, \hat{z}$ 
    is also represented (with $\hat{y}$ pointing upwards from the page).}
  \label{fig:liftings}
\end{figure}
Since our discontinuous interpolant is the same as in
\cite{Schotzau2015}, apart from the nonzero constant in $\cL^\ell_0$ 
(see \eqref{eq:Pihpelld} and \cite[Equation (4.10)]{Schotzau2015}),
the construction of the polynomial face-jump liftings can be replicated verbatim 
as in \cite{Schotzau2015}.
We recall it here briefly, 
referring the reader to the aforementioned 
\cite[Section 5.3]{Schotzau2015} for the details.

We start by considering the interface between two mesh levels $\cL^{\ell}_k$ and
$\cL^\ell_{k+1}$, $k\in \mathbb{N}$. 
We introduce a local coordinate system $\hat{x}, \hat{y},
\hat{z}$ and label the faces and edges belonging to the interface as $F_i$,
$i=1,2,3$ and $E_i$, $i=1, \dots, 9$, respectively, see Figure \ref{fig:liftings}. 
Furthermore, we denote by $h_E$ the maximum length of all edges $E_i$.
We refer to Figure \ref{fig:liftings} for the precise numbering of edges and faces 
and for the location of the local system of coordinates.
Given two neighboring elements $K_a$ and $K_b$ with interface $f_{ab} =
\overline{K}_a\cap \overline{K}_b$, the jump of a function
\begin{equation*}
  v =
  \begin{cases}
    v^{K_a}& \text{in }K_a\\
    v^{K_b}& \text{in }K_b
  \end{cases}
\end{equation*}
on $f_{ab}$ is
given by
\begin{equation*}
  \jump{v}_{{f_{ab}}} = v^{K_{a}}_{|_{f_{ab}}} n_{K_a}  + v^{K_{b}}_{|_{f_{ab}}} n_{K_b},
\end{equation*}
where $n_{K_a}$ (resp. $n_{K_b}$) is the normal pointing outwards from element
$K_a$ (resp. $K_b$).

Consider face $F_1$ of Figure \ref{fig:liftings}: 
we define the jump of the discontinuous interpolant on this face as
\begin{equation*}
  \jump{\Pihpelld u}_{F_1} = \jump{\Pihpelld u}_{f_{1j}} \quad \text{on }f_{1j},\, j=1,2,3,4
\end{equation*}
where $f_{1j}$ are the four parts of the face $F_1$, see Figure \ref{fig:liftings}. 
The jumps on the other faces are defined similarly. 
The edge jump, e.g. on $E_1$, is then defined as
$\jump{\Pihpelld u}_{|_{E_1}} = (\jump{\Pihpelld u}_{F_1})_{|_{E_1}}$. 
Let $n$ denote the normal on face $F_1$ pointing outwards from $K^\ell_{010, k+1}$; 
the lifting of the jump on edge $E_1$ is given by
\begin{equation}
  \label{eq:lifting-edge}
  \fL^{E_1} (\Pihpelld u) =
  \begin{cases}
  \begin{aligned}
   \left(\jump{\Pihpelld u}_{|_{E_1}}\cdot n\right)(1-2\hy/h_E&)(1-2\hz/h_E) \\
     &\text{in } K^\ell_{011, k}\cup K^\ell_{111, k}
  \end{aligned}
    \\
    0 \hfill \text{elsewhere.}
  \end{cases}
\end{equation}

After having defined the other edge liftings $\fL^{E_i}$, $i=2,\dots, 9$, 
in the same way, we can introduce the full edge lifting operator
\begin{equation*}
  \fL^{E} = \sum_{i=1}^9 \fL^{E_i}.
\end{equation*}
We now introduce the face lifting operator for the face $F_1$, the other
liftings being derived in the same way. We have
\begin{multline}
  \label{eq:lifting-face}
  \fL^{F_1} (\Pihpelld u)
   \\
  =\begin{cases}
  \begin{aligned}
   \fL^E(\Pihpelld u) + \left(\jump{ \Pihpelld u 
   + \fL^E (\Pihpelld u)}_{|_{F_1}}\cdot n  \right)(1-2\hy/h_E) \\
    \text{in }K^\ell_{n, k}, n\in \{010, 011, 110, 111\}
   \end{aligned}
   \\
   0 \hfill \text{otherwise},
  \end{cases}
\end{multline}

where $n$ is again the normal 
on face $F_1$ pointing outwards from $K^\ell_{010, k+1}$.
Then, the global lifting $\fL^{k}$ 
between mesh levels $\cL^\ell_k$ and $\cL^\ell_{k+1}$ 
is the sum of the local liftings on the three interfaces:
\begin{equation} \label{eq:lifting-def1}
\fL^{k} = \fL^{F_1} + \fL^{F_2} + \fL^{F_3}.
\end{equation}
Note that the lifting thus defined has support only in the elements belonging
to mesh level $\cL^\ell_k$.

We now turn to the terminal layer, i.e., to the jumps of $\Pihpelld u$ between
the element $K^\ell_{000, 0} = (0, 2^{-\ell})^3$ and the elements of
$\cL^\ell_1$. The (three) faces belonging to the interface are all regular,
but $\Pihpelld u$ is defined as a constant in $K^\ell_{000,0}$, see \eqref{eq:Pihpelld}.
One has to lift the nodal jumps at all the nodes of $K^\ell_{000,0}$
except the origin. Then, the same procedure as for the other mesh layers
(applied to the nodally lifted polynomial) gives a lifting operator $\fL^0$.

The full lifting operator is thus given by the sum of the local operators, 
as
  \begin{equation}
    \label{eq:lifting-def}
   \fL = \sum_{k=0}^{\ell-1} \fL^k,
  \end{equation}
  with all $\fL^k$ constructed as in \eqref{eq:lifting-def1}. 
  Such a lifting permits to
  obtain a conforming projector into $\Xhpell$, with approximation error bounded
  by a multiple of the approximation error of the discontinuous operator
  $\Pihpelld$, as stated in the next proposition, that is proven in \cite{Schotzau2015}.
 \begin{proposition}{\cite[Proposition 5.3]{Schotzau2015}}
   \label{prop:lifting}
   The discontinuous projection operator $\Pihpell$ defined in
   \eqref{eq:Pihpelld} and the lifting operator $\mathfrak{L}$ defined in
   \eqref{eq:lifting-def} are such that
   \begin{equation*}
   \Pihpell = \Pihpelld + \fL \Pihpelld : X \to \Xhpell
   \end{equation*}
   is conforming in $H^1(Q)$ and there exists $C>0$ independent of $p$ such that
   \begin{multline*}
  \sum_{K\in \cG^\ell} 
       \frac{1}{h_K^2}\| u - \Pihpell u\|^2_{L^2(K)} 
     + \| \nabla \left(  u - \Pihpell u\right) \|^2_{L^2(K)}     
 \\ \leq C 
p^{18} \sum_{K\in \cG^\ell} \frac{1}{h_K^2}\| \eta\|^2_{L^2(K)} + \| \nabla   \eta \|^2_{L^2(K)}      
   \end{multline*}
 \end{proposition}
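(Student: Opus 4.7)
My plan is to split the proof into two independent pieces: first, the $H^1(Q)$-conformity of $\Pihpell u$; second, the quantitative stability bound, where the factor $p^{18}$ will arise from a careful cascade of polynomial trace and inverse inequalities. For conformity, since $\Pihpell u$ is elementwise in $\mathbb{Q}_p$, I would reduce $H^1$-conformity to continuity across every interior face of $\cG^\ell$. Interior faces split into regular matching faces (inside a mesh layer) and irregular faces at the interface of two successive layers $\cL^\ell_k,\cL^\ell_{k+1}$. On regular faces, Lemma~\ref{lemma:reg-cont} already yields continuity of $\Pihpelld u$, and the cutoff factors in \eqref{eq:lifting-edge}--\eqref{eq:lifting-face} are designed so that $\fL^k\Pihpelld u$ vanishes identically on any regular interior face. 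On irregular faces, I would verify by direct inspection that (i) each edge lift $\fL^{E_i}$ matches the edge jump on $E_i$ and vanishes on every other edge of each face it touches, so $\Pihpelld u + \fL^E\Pihpelld u$ has zero edge jumps; (ii) each face lift $\fL^{F_i}$ then matches the remaining face jump of this edge-corrected function on $F_i$ and vanishes on the edges of $F_i$. An induction over the layers, together with a separate vertex-lift argument for the terminal element $K^\ell_{000,0}$ (where $\Pihpelld u$ is the constant $u(0)$), will then deliver continuity everywhere.

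\emph{Stability estimate.} Writing $u - \Pihpell u = \eta - \fL\Pihpelld u$ and using the triangle inequality, the claim reduces to the elementwise bound
\begin{equation*}
\tfrac{1}{h_K^2}\|\fL\Pihpelld u\|_{L^2(K)}^2 + \|\nabla \fL\Pihpelld u\|_{L^2(K)}^2
\leq C p^{18}\!\sum_{K'}\!\Bigl(\tfrac{1}{h_{K'}^2}\|\eta\|_{L^2(K')}^2 + \|\nabla \eta\|_{L^2(K')}^2\Bigr),
\end{equation*}
where $K'$ runs over $K$ and its neighbours across the interface. Since $u$ is globally continuous, $\jump{\Pihpelld u} = -\jump{\eta}$, so every jump trace on an interfacial face is controlled by the trace of $\eta$ on the small side. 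The lifts in \eqref{eq:lifting-edge}--\eqref{eq:lifting-face} being explicit products of a polynomial edge/face trace with affine cutoffs, I would chain three ingredients: (i) the $L^2$ extension from an edge (resp.\ face) into the bulk of $K$ through the cutoff, which scales naturally with $h_K$ but whose gradient costs a factor $h_K^{-1}$; (ii) polynomial inverse inequalities on the reference cube, each costing a factor $(p+1)^2$ when passing from an $L^2$-norm on a subface to an $L^2$-norm on a lower-dimensional subface (see \cite{Schwabphp98,Georgoulis2008}); and (iii) the continuous trace inequality $\|\eta\|_{L^2(F)}^2 \leq C(h_{K'}^{-1}\|\eta\|_{L^2(K')}^2 + h_{K'}\|\nabla\eta\|_{L^2(K')}^2)$ on the small-side element. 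Because $\fL^{F_i}$ contains the inner edge-lift $\fL^E\Pihpelld u$, the two chains will have to be nested, which is precisely where the exponent $18$ emerges.

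\emph{Main obstacle.} The hard part will be the sharp accounting of the powers of $p$: each cascade (edge-to-face, face-to-bulk, cutoff-times-polynomial) contributes a different power, and the $H^1$-norm on the left forces one additional polynomial inverse inequality to absorb the gradient of the cutoff hitting the polynomial trace. Secondary care is needed at the terminal element $K^\ell_{000,0}$, where $\eta = u - u(0)$ and the nodal jumps must be bounded via Hardy's inequality together with $\gamma>3/2$, much as in the proof of Lemma~\ref{lemma:hp-d-appx}; this step does not change the $p$-exponent but relies on the $\cJ^1_\gamma$-regularity of $u$. Finally, I would check that the lifts preserve the homogeneous Dirichlet trace on $\Gamma$: by Remark~\ref{remark:nullface}, $\Pi_p^K u$ vanishes on $\Gamma$-faces, so the corresponding face jumps are zero and no lift is generated there, which is what ensures $\Pihpell u \in \Xhpell$ in the sense of \eqref{eq:hp-def}.
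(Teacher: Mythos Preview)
The paper does not prove this proposition at all: it is stated with the citation \cite[Proposition 5.3]{Schotzau2015} and the surrounding text explicitly says ``that is proven in \cite{Schotzau2015}''. So there is no proof in the paper to compare against; the result is imported wholesale.

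Your sketch is a faithful outline of the argument one finds in \cite{Schotzau2015}: conformity via the cutoff structure of the edge and face liftings (Lemma~\ref{lemma:reg-cont} on regular faces, direct verification on irregular ones, a separate treatment of the terminal layer), and stability via the chain $\jump{\Pihpelld u}=-\jump{\eta}$, trace inequalities on faces and edges, polynomial inverse inequalities on the reference element, and explicit bounds on the cutoff-times-trace liftings. The identification of the nested edge-then-face lifting as the source of the high $p$-exponent is also correct. Two minor points: the constant $C$ in the statement is independent of $p$ but not of the mesh grading (here fixed at ratio $1/2$), which your argument implicitly uses through the equivalence $h_K\simeq h_{K'}$ for neighbouring elements; and the terminal-layer nodal lifts require a pointwise bound on $u-u(0)$ at the vertices of $K^\ell_{000,0}$ away from the origin, for which the embedding $\cJ^1_\gamma\hookrightarrow C^0$ with $\gamma>3/2$ (as you note) is the right tool. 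Since the paper itself defers entirely to the reference, your plan goes well beyond what is required here.
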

Here, $\eta = u - \Pihpelld u$.

The exponential convergence of the conforming approximation, stated in
Proposition \ref{prop:hp-error}, is a direct consequence of the last results.
 \begin{proof}[Proof of Proposition \ref{prop:hp-error}]
   Inequality \eqref{eq:hp-error} follows from Proposition
   \ref{prop:lifting} and Lemma \ref{lemma:hp-d-appx}, once the algebraic term
   in $p$ of inequality \eqref{eq:lifting} has been absorbed in the exponential
   by a change of constants.
 \end{proof}
 \begin{remark}
  \label{remark:nullboundary}
  Recall that
   $\Gamma = \left\{ (x_1,x_2,x_3)\in \partial Q: x_1 x_2 x_3 \not= 0\right\}$
  contains the faces of the boundary of $Q$ not abutting at the singularity.
  All liftings obtained by the operator \eqref{eq:lifting-def} 
  admit traces which vanish on $\Gamma$. 
  I.e., for all $v\in \cJ^\infty_\gamma(Q)$,
  \[
    \left( \Pihpell v \right)_{|_\Gamma} = \left( \Pihpelld \right)_{|_\Gamma}.
    \]
Therefore, by Remark \ref{remark:nullface}, if $v_{|_\Gamma} = 0$,
then also $\left( \Pihpell v \right)_{|_\Gamma} = 0$.
\end{remark}
\subsection{Combination of patches}
\label{sec:CombPtch}
 We conclude this section by considering the approximation in a domain which
 contains the singular point in its interior. 
 Let then $R = (-1, 1)^3$. The
 definition of the weighted space follows directly from the definition of the
 spaces in $Q$, by keeping the weight $r = |x|$ to be the distance from the origin.

 The
 construction of the graded mesh is done by decomposing $R$ into eight sub-cubes
 of unitary edge and by collecting the elements of the sub-meshes (called here
 ``patches'') obtained by symmetry from $\cG^\ell$. The projector $\Pihpell$ in $R$ 
 can also be straightforwardly constructed by
 combining local projectors obtained by symmetry; 
 we show that it is continuous on inter-patch faces, 
 hence conforming on the whole cube $R$.
 
 We detail the construction for two patches; 
 the rest follows by iterating this argument. 
 Specifically, we consider the two cubes
 \begin{equation*}
  Q^+ = (0,1)^3 \qquad Q^{-} = (-1, 0) \times (0,1)^2,
 \end{equation*}
 and introduce the reflection operator
 \begin{equation*}
 \psi^\pm : Q^+\to Q^- \qquad \psi^\pm :(x_1, x_2, x_3) \mapsto (-x_1, x_2, x_3).
 \end{equation*}
 Note that $(\psi^\pm)^{-1} = \psi^\pm$.
 Then, the mesh on the domain $Q^\pm = \overline{Q^+\cup Q^-}$ 
 is given by
 \begin{equation*}
 \cG^{\ell, \pm} 
 = 
 \cG^\ell \cup \cG^{\ell, -}, \qquad \cG^{\ell, -} 
 = 
 \{\psi^\pm (K) : K\in \cG^\ell\},
 \end{equation*}
see Figure \ref{fig:patches}.
\begin{figure}
  \centering
  \includegraphics[width=.8\textwidth]{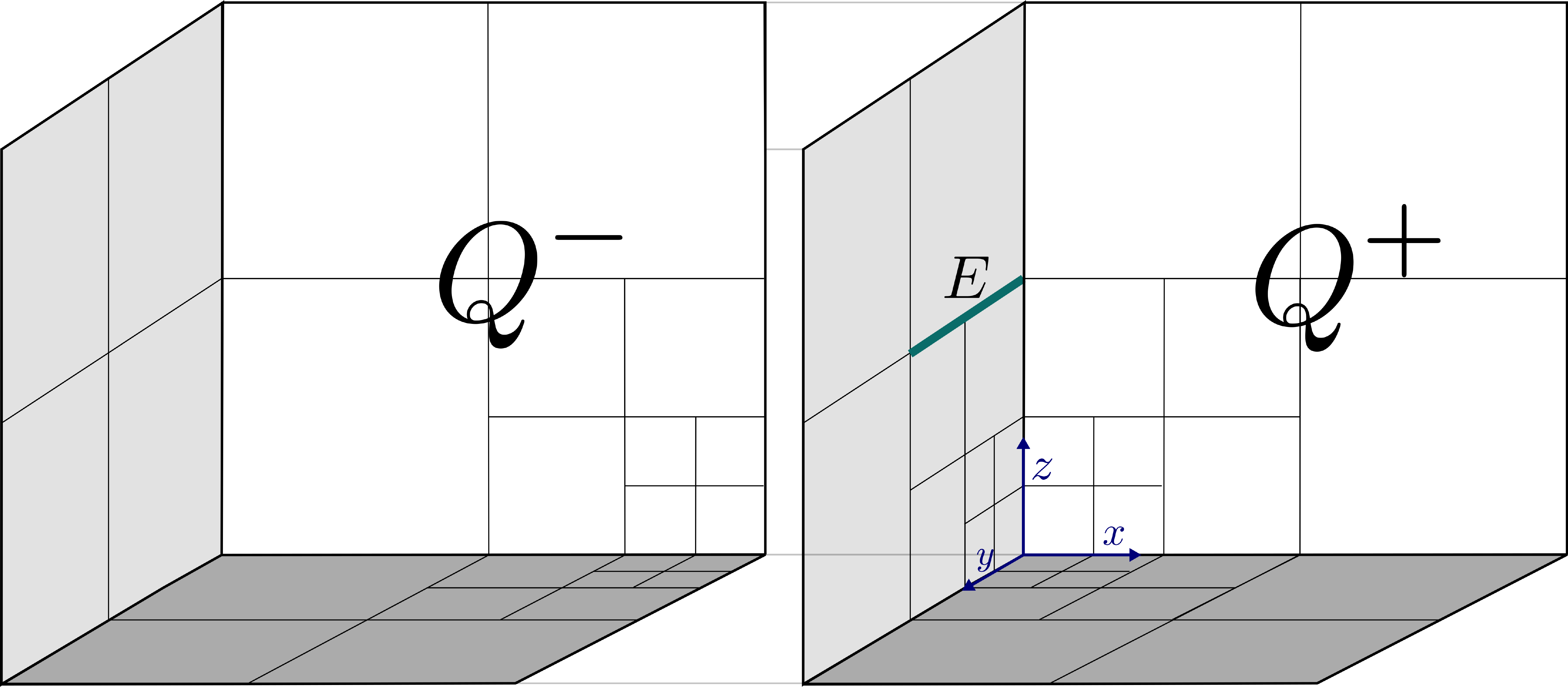}
  \caption{The mesh patches $\cG^{\ell, -}$ on $Q^-$ and $\cG^{\ell}$ on $Q^+$.
    An edge $E$ belonging to the interpatch interface is highlighted.}
  \label{fig:patches}
\end{figure}
 The projection operator for functions 
 $v\in \cJ^\infty_\gamma(Q^\pm)$ can be easily constructed by reflection
 \begin{equation*}
   (\Pihpellpm v)_{|_K} =
   \begin{cases}
     \PihpellK v & \text{if }K\in \cG^\ell\\
     \left(\PihpellK (v\circ\psi^\pm)\right)\circ \psi^\pm & \text{if }K\in \cG^{\ell,-}.
   \end{cases}
 \end{equation*}
 The operator thus obtained is continuous hence conforming, as discussed in the
 next lemma.
 \begin{lemma}
   \label{lemma:patch-pm}
   The operator $\Pihpellpm$ is conforming in $H^1(Q^\pm)$. Furthermore, 
if $\gamma \geq \gamma_0 > 3/2$ and $\gev \geq 1$, then for all $u\in
\cJ^{\varpi}_\gamma(Q^{\pm}; C, A, \gev)$ there exist 
$C_{\mathsf{hp}}^\pm$, $b_{\mathsf{hp}}$ such that, for all $\ell\in \mathbb{N}$,
with $p \geq c_0^\pm \ell^\gev$ for a sufficiently large
$c_0^\pm>0$ independent of $\ell$, 
there holds
\begin{equation}
  \label{eq:hp-error-pm}
  \| u - \Pihpellpm u \|_{H^1(Q^\pm)} 
  \leq 
  C_{\mathsf{hp}}^\pm e^{-b_{\mathsf{hp}}^\pm\ell}.
\end{equation}

Furthermore, there holds $\dim(\Xhpell)\simeq  \ell^{3\gev+1}$.
\end{lemma}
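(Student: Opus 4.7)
The plan is to reduce the lemma to the single-patch result in Proposition \ref{prop:hp-error} by exploiting the reflection symmetry that defines $\Pihpellpm$. There are three things to establish: $H^1(Q^\pm)$-conformity (the subtle point), the exponential error bound, and the dimension count.

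For conformity, within $Q^+$ the projection coincides with $\Pihpell u$, which is $H^1(Q^+)$-conforming by Proposition \ref{prop:lifting}; within $Q^-$ it is a pullback of an $H^1(Q^+)$ function by the smooth diffeomorphism $\psi^\pm$, hence in $H^1(Q^-)$. The remaining task is to match the traces on the interpatch face $F=\{0\}\times(0,1)^2$. I would argue this from the tensor-product structure $\hPi_p=\hpi_p\otimes\hpi_p\otimes\hpi_p$ and property \eqref{eq:onedim-prop}: since $\hpi_p$ preserves boundary values, the trace of $\Pi_p^K v$ on a coordinate-aligned face of $K$ depends only on $v$ restricted to that face. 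The edge and face liftings \eqref{eq:lifting-edge}--\eqref{eq:lifting-def} are piecewise tensor products of one-dimensional bubbles, so the same ``face-data only'' property propagates to the full conforming projector $\Pihpell$ on $\partial Q^+$. Because $\psi^\pm$ fixes $F$ pointwise, we have $u|_F=(u\circ\psi^\pm)|_F$, and consequently $(\Pihpell u)|_F=(\Pihpell(u\circ\psi^\pm))|_F$, so the pieces agree along $F$.

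For the error estimate, I would decompose
\begin{equation*}
\|u-\Pihpellpm u\|_{H^1(Q^\pm)}^2
= \|u-\Pihpell u\|_{H^1(Q^+)}^2
+ \|u-(\Pihpell(u\circ\psi^\pm))\circ\psi^\pm\|_{H^1(Q^-)}^2.
\end{equation*}
A change of variables $y=\psi^\pm(x)$ (an isometry with $|\det D\psi^\pm|=1$) converts the second term into $\|\tilde u-\Pihpell \tilde u\|_{H^1(Q^+)}^2$ with $\tilde u=u\circ\psi^\pm$. Since the radial weight $r=|x|$ and the partial derivative multi-index norms are invariant under $\psi^\pm$, one checks directly that $\tilde u\in\cJ^\varpi_\gamma(Q^+;C,A,\gev)$ with the same constants. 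Applying Proposition \ref{prop:hp-error} to each term with $p=c_0^\pm\ell^\gev$ and summing yields \eqref{eq:hp-error-pm}. The dimension bound is immediate: $\cG^{\ell,\pm}$ has exactly twice as many elements as $\cG^\ell$ with the same polynomial degree $p\simeq\ell^\gev$, so $\dim(\Xhpell)\simeq\ell^{3\gev+1}$ is preserved.

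The principal difficulty I anticipate is the trace-matching step, specifically controlling the contribution of the nodal/edge lifting anchored at the vertices of the terminal element $K^\ell_{000,0}$ that lie on $F$, namely $(0,2^{-\ell},0)$, $(0,0,2^{-\ell})$ and $(0,2^{-\ell},2^{-\ell})$. At such a vertex the interior trace equals $u(0)$ (by \eqref{eq:Pihpelld}) while the trace from the neighboring $\cL^\ell_1$-element equals $u$ at the vertex, so the jump that is lifted is $u(v_\star)-u(0)$. Because $\psi^\pm$ fixes these vertices, the analogous lifting constructed from $\tilde u$ at the mirror vertex has the identical jump $\tilde u(v_\star)-\tilde u(0)=u(v_\star)-u(0)$, and tensor-product structure of the lifting profile gives the same face function. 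Once this symmetry is written out carefully for each of the lifting ingredients $\fL^{E_i}$ and $\fL^{F_j}$ that touch $F$, the conformity assertion follows.
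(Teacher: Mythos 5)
Your proof is correct and follows essentially the same route as the paper's: conformity is reduced to matching traces on the interpatch face $F^{\pm}=\{0\}\times(0,1)^2$, which holds because the interpatch faces are regular (so the trace of the discontinuous tensor-product projector there is determined by data fixed by the reflection $\psi^{\pm}$) and because the nodal/edge/face liftings are built symmetrically from jumps that coincide on $F^{\pm}$; the exponential bound then follows by applying the single-patch estimate of Proposition \ref{prop:hp-error} in each reflected copy, exactly as in the paper. One caveat: your intermediate claim that the trace of the \emph{full conforming} projector on $F^{\pm}$ depends only on $u|_{F^{\pm}}$ is stated too strongly (the liftings are driven by jumps on interior layer interfaces, not only by face data), but your final paragraph supplies the correct repair --- namely that the lifted jumps on edges and nodes lying in $F^{\pm}$ are invariant under $\psi^{\pm}$ --- which is precisely the symmetry argument the paper invokes.
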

\begin{proof}
  $\Pihpellpm$ is continuous in the sub-patches $Q^+$ and $Q^-$. 
  It remains to check the continuity across the inter-patch interface $F^{\pm} = \{0\}\times
  (0,1)^2$. By construction, all elemental faces belonging to the interface are
  regular, hence, by Lemma \ref{lemma:reg-cont}, the discontinuous projector
  $\Pihpelld$ is continuous across these faces.

  We consider the error contribution from interior mesh layers, i.e., 
  from all elements in $\cL^\ell_j$, $j>0$.
  For all faces $F$ in interior mesh layers which are situated 
  perpendicular to $F^{\pm}$, 
  we have
  \begin{equation*}
      \jump{ \Pihpelld u + \fL^E (\Pihpelld u)}_F\cdot n  = 0.
  \end{equation*}
  We now consider any edge $E$ belonging to $F^{\pm}$ and separating the mesh
  levels $\cL^\ell_{k}$ and $\cL^\ell_{k+1}$, see Figure \ref{fig:patches} for
  an example. 
  By the continuity of the
  discontinuous projector across regular faces
  \begin{equation*}
    \jump{\Pihpelld u }_E = \jump{\Pihpelldm u }_E,
  \end{equation*}
  where $\Pihpelldm$ is the discontinuous projector in patch $\cG^{\ell, -}$.
  Therefore, from definitions \eqref{eq:lifting-edge}, \eqref{eq:lifting-face},
  and \eqref{eq:lifting-def1}, we conclude that the projection operator is
  continuous across interior mesh layers $\cL^\ell_k$, $k>0$.

  When dealing with the terminal layer $\cL^\ell_0$, we note that the
  discontinuous projector is constant hence continuous. The nodal
  liftings are continuous by the symmetry of their construction; 
  the edge liftings are then continuous by the same argument as in interior mesh layers, 
  and this gives the continuity between patches.

  Finally, equation \eqref{eq:hp-error-pm} follows 
  from the application of the corresponding
  approximation results in each patch.
 \end{proof}
 We can directly extend the construction in the proof of the above lemma to the 
 remaining patches $R^m =(0, a_1)\times (0, a_2)\times (0, a_3)$ 
 with $(a_1, a_2, a_3)\in \{-1, 1\}^3$, $m=0, \dots, 7$.
 Recall that $\Pihpellm$ is the conforming $hp$
 projector in patch $R^m$, obtained by reflection from the one defined in
 $(0,1)^3$, see \eqref{eq:Pihpellm}. 
 Recall also that the functions $\psi^m$ are
 the reflections from $(0,1)^3$ to $R^m$. 
 For $\gamma>3/2$, given the finite element space on $R = \bigcup_m R^m$,
 \begin{equation*}
   \XhpellR = \{v\in H^1(R): v\circ \psi^m \in \Xhpell,\, m=0, \dots,7 \},
 \end{equation*}
 we define the global projector 
 \begin{equation}
   \label{eq:PihpellR}
   \PihpellR : \cJ_\gamma^\infty(R) \to \XhpellR \quad \text{such that}\quad \PihpellR v_{|_{R^m}}=\Pihpellm v_{|_{R^m}}.
 \end{equation}
 Then, by the same arguments as in Lemma \ref{lemma:patch-pm} applied to all
 interpatch interface, there holds the
 following result.
 \begin{corollary}
   \label{cor:exponential}
   The operator $\PihpellR$ defined in \eqref{eq:PihpellR} is conforming in $H^1(R)$. Furthermore, 
if $\gamma \geq \gamma_0 > 3/2$ and $\gev \geq 1$, then for all $u\in
\cJ^{\varpi}_\gamma(R; C, A, \gev)$ 
exist constants $C^R_{\mathsf{hp}}$, $b^R_{\mathsf{hp}}$ (that depend on $C, A$, and $\gev$)
such that, for every $\ell \in \mathbb{N}$ there holds, 
with $p \geq c_0^R \ell^\gev$ for some $c_0^R>0$ independent of $\ell$,
the error bound
\begin{equation*}
  \| u - \PihpellR u \|_{H^1(Q^\pm)} 
  \leq 
  C_{\mathsf{hp}}^R e^{-b_{\mathsf{hp}}^R\ell}.
\end{equation*}
Furthermore, $\dim(\Xhpell)\simeq  \ell^{3\gev+1}$.

\end{corollary}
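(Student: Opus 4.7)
The plan is to derive Corollary \ref{cor:exponential} by iterating the interpatch continuity argument of Lemma \ref{lemma:patch-pm} around the origin, and then applying the exponential error bound on each of the eight patches separately.

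First, I would verify conformity across pairwise patch interfaces. Any two adjacent patches $R^m, R^{m'}$ share a face lying in a coordinate plane $\{x_i = 0\}$. Since the geometric mesh $\cG^\ell$ is symmetric under every reflection $\psi^m$ and the projectors $\Pihpellm$ are defined by transport under these reflections, the configuration on $R^m \cup R^{m'}$ is isometric to the two-patch configuration analyzed in Lemma \ref{lemma:patch-pm}. In particular, every elemental face on an interpatch interface is a \emph{regular} matching face of the combined mesh, so Lemma \ref{lemma:reg-cont} guarantees that the discontinuous building block $\Pihpelld$ is continuous across these faces, and the jumps that drive the face liftings \eqref{eq:lifting-face} with normal along $x_i$ vanish identically. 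Hence each pair of patchwise projectors glues continuously across their shared face.

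Second, I would treat the lower-dimensional skeleton shared by more than two patches. Each interpatch edge (lying on two coordinate planes, such as $\{0\}\times\{0\}\times(0,1)$) is shared by four patches. The edge lifting \eqref{eq:lifting-edge} depends only on the face-jump restriction to that edge; by the first step those jumps vanish on all interpatch faces meeting at the edge, so the four edge-lifting contributions reduce to the ones already matched pairwise, and agreement is automatic by symmetry of the construction under the reflections $\psi^m$. The origin itself is the common vertex of all eight patches; on the terminal elements $K^\ell_{000,0}$ the discontinuous projector is the constant $u(0)$ (well-defined since $\gamma > 3/2$, by \cite{Kozlov1997}), which is consistent across all eight patches, and the attendant nodal liftings at the other vertices of these terminal elements coincide on interpatch interfaces by the same symmetric construction used in the terminal-layer argument of Section~\ref{sec:Confhp}. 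Together these three checks show $\PihpellR \in H^1(R)$.

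For the quantitative bound, I would invoke Proposition \ref{prop:hp-error} on the patch $(0,1)^3$, whose norm it transfers via the isometry $v \mapsto v\circ \psi^m$ to each patch $R^m$: if $u \in \cJ^\varpi_\gamma(R; C, A, \gev)$, then $u \circ \psi^m \in \cJ^\varpi_\gamma((0,1)^3; C, A, \gev)$ with the same constants, so
\[
 \| u - \Pihpellm u \|_{H^1(R^m)}^2 \leq (C_{\mathsf{hp}})^2 e^{-2 b_{\mathsf{hp}} \ell}
\]
for every $m = 0, \dots, 7$, provided $p \geq c_0 \ell^\gev$. Summing over the eight patches produces the stated bound with $C_{\mathsf{hp}}^R = \sqrt{8}\, C_{\mathsf{hp}}$ and $b_{\mathsf{hp}}^R = b_{\mathsf{hp}}$, while the dimension count $\dim(\XhpellR) \simeq 8\dim(\Xhpell) \simeq \ell^{3\gev+1}$ follows immediately from the definition.

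The main obstacle is the bookkeeping in the second step: tracking the various transverse face liftings \eqref{eq:lifting-face} and edge liftings \eqref{eq:lifting-edge} that touch, but are not supported on, the interpatch skeleton, and checking that none of them reintroduce a jump across an interpatch face, edge, or at the origin. Everything hinges on the symmetric construction under the reflections $\psi^m$ and on the vanishing of jumps across the regular interpatch faces, so the verification is essentially a careful enumeration of cases that follows the template of Lemma \ref{lemma:patch-pm} rather than a new estimate.
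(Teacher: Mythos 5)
Your proposal is correct and follows essentially the same route as the paper, which simply states that the corollary holds ``by the same arguments as in Lemma \ref{lemma:patch-pm} applied to all interpatch interfaces'' and then sums the patchwise error bounds from Proposition \ref{prop:hp-error}. You merely spell out the case enumeration (faces, edges, origin vertex) that the paper leaves implicit.
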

\section{Extension of rank bounds to domains with internal singularity}
\label{sec:ExtRkBdIntSing}
As a corollary to Theorem \ref{th:QTT-analytic}, we show here how the result can
be generalized to functions that have the singularity in an internal point of
the domain. As an example, we will consider the case of the axiparallel cube
$R = (-1,1)^3$ and of functions in the weighted analytic class 
$\cJ^\varpi_\gamma(R;C,A,\gev)$ with singularity at the origin.
The cube $R$ can be decomposed into eight congruent cubes,
all with the singularity situated at one corner, 
that we will denote by $R^m$, $m=0, \dots, 7$. 
For each $m$, there
exist $(a_1, a_2, a_3)\in \{-1, 1\}^3$ such that 
$R^m = (0, a_1), \times (0, a_2)\times (0, a_3)$.
We do not need to
specify any particular ordering, but choose, without loss of generality $R^0 = Q$.
We will denote $\psi^m : Q \to R^m$ the linear transformation from $Q = R^0$ to $R^m$
such that that for all $(x_1,x_2,x_3)\in Q$
\begin{equation*}
  \psi^m :
  \begin{pmatrix}
    x_1\\x_2\\x_3
  \end{pmatrix}
\mapsto
  \begin{pmatrix}
   a_1&& \\
   &a_2& \\
   &&a_3
  \end{pmatrix}
  \begin{pmatrix}
    x_1\\x_2\\x_3
  \end{pmatrix}
, \qquad \text{with }(a_1, a_2, a_3)\in \{-1,1\}^3,
\end{equation*}
i.e., $\psi^m$ only operates reflections with respect to interpatch interfaces.
Note that $\psi^0$ is the identity.

Furthermore, we
define by $\scrA^{\ell.m}$ the analysis operator (see Section
\ref{sec:AnSyntOp}) of patch $R^m$, such that
\begin{equation*}
\scrA^{\ell, m} v_{|_{R^m}} \in \mathbb{R}^{2^\ell\times 2^\ell\times 2^\ell} 
\quad \text{and}\quad
\scrA^{\ell, m}v = \scrA^{\ell} (v\circ \psi^m).
\end{equation*}
\subsection{Quasi interpolation on $R$}
\label{sec:QuasIntR}
We can then define the local $hp$ projection and interpolation operators in the patches $R^m$,
$m=0, \dots, 7$, in the same way, i.e., as
\begin{equation}
  \label{eq:Pihpellm}
 \Pihpellm v = \left(\Pihpell (v\circ \psi^m)\right)\circ (\psi^m)^{-1} \quad {\text{and}}\quad
\cI^{\ell,m }v = \left(\cI^{\ell} (v\circ \psi^m)\right)\circ (\psi^m)^{-1} 
\end{equation}
in each $R^m$. The definition of the local quasi interpolation operators also
follows directly, by setting $\fP^{\ell, m} = \cI^{\ell, m}\Pihpellm$, for $m = 0,
\dots, 7$. Then, the global (on $R$) quasi interpolation operator is the
operator $\fP^{\ell, R}$ such that $\fP^{\ell, R} _{|_{R^m}} = \fP^{\ell, m}$
for all $m =0, \dots, 7$.
\subsection{Patchwise QTT formats}
\label{sec:PtchQTT}
It is now straightforward to consider the ``patchwise QTT'' formats which are
constructed by adding a patch index to the formats considered so far. 
For a function $u\in \cJ^\infty_\gamma(R)$,
we consider the tensor 
$A \in \mathbb{R}^{8\times 2^\ell\times 2^\ell\times 2^\ell}$ 
such that for $m=0, \dots, 7$
\begin{equation*}
  A_{m,:,:,:} = \scrA^{\ell, m} \fP^{\ell,m}u.
\end{equation*}
Then, writing with the usual notation 
$i=\overline{i_1\dots i_\ell}$, $j=\overline{j_1\dots j_\ell}$ 
and $k=\overline{k_1\dots k_\ell}$,
\begin{itemize}
\item 
$A$ admits a \emph{patchwise, classic QTT} decomposition if 
  \begin{equation*}
   A_{m,i,j,k} = U^1_{m, :}(i_1) \cdots U^\ell(i_\ell)V^1(j_1) \cdots V^\ell(j_\ell)W^1(k_1) \cdots W^\ell(k_\ell)
  \end{equation*}
for all $m=0, \dots, 7$, $(i,j,k)\in \{0, \dots, 2^\ell-1\}^3$ and
where $U^1_{m, :}(i_1)$ indicates the $m$th row of $U^1(i_1)$
with cores defined as in \eqref{eq:qtt-cores-def} and the following convention on ranks
\begin{equation*}
    r_0 := 8 \quad  t_\ell := 1.
\end{equation*}
Note that the only modification with respect to Definition \ref{def:QTT} 
is the convention $r_0=8$.
\item 
$A$ admits a \emph{patchwise, transposed order QTT} decomposition if 
  \begin{equation}
    \label{eq:patch-qt3}
   A_{m,i,j,k} = U^1_{m, :}(\overline{i_1j_1k_1}) \cdots U^\ell(\overline{i_\ell j_\ell k_\ell })
  \end{equation}
  with cores as in Definition \ref{def:QT3} and 
  with the restriction on the ranks $r_0 = 8$, $r_\ell = 1$.
\item 
$A$ admits a \emph{patchwise, Tucker-QTT} decomposition if 
\begin{equation}
    \label{eq:patch-tqtt}
\begin{aligned}
  A_{m,i,j,k} 
   = \sum_{\beta_1, \beta_2, \beta_3 = 1}^{R_1, R_2, R_3}
  &G_{\beta_1, \beta_2, \beta_3}^m
 U^1_{{\beta_1}}(i_1) U^2(i_2) \ldots U^\ell(i_\ell) \\
  V^1_{{\beta_2}}(j_1) &V^2(j_2)\ldots V^\ell(j_\ell) 
 W^1_{{\beta_3}}(k_1) W^2(k_2)\ldots W^\ell(k_\ell).
\end{aligned}
\end{equation}

where, clearly, the Tucker core is now a 
four-dimensional array of size $8\times R_1\times R_2\times R_3$.
\end{itemize}
Let $\mathcal{T}^{\ell, R}$ be the tensor product mesh on $R$ given by
\begin{multline*}
\mathcal{T}^{\ell,R} 
= 
\{ (2^{-\ell}i, 2^{-\ell}(i+1))\times (2^{-\ell}j, 2^{-\ell}(j+1)) 
   \times (2^{-\ell}k, 2^{-\ell}(k+1)), \\(i,j,k) \in\{ -2^\ell+1,\dots, 2^\ell-1\}^3\}.
\end{multline*}
We define the the finite element space in $R$ as
\begin{equation*}
\XqttellR = \left\{v\in H^1_0(R): v_{|_K}\in \mathbb{Q}_1(K), \text{for all }K\in \mathcal{T}^{\ell, R} \right\}.
\end{equation*}
The following proposition is then a direct consequence of Theorem
\ref{th:QTT-analytic} and of Corollary \ref{cor:exponential}.
\begin{proposition}
  \label{prop:QTT-R}
Assume $\gamma>3/2$, $C_u>0$, $A_u>0$, $\gev\geq 1$, and $0< \epsilon_0 \ll 1$. 
  Furthermore, assume the function $u$ belongs to the weighted Gevrey class 
  $u\in \cJ^\varpi_\gamma(R; C_u, A_u, \gamma, \gev)\cap H^1_0(R)$.
  Then, for all $0< \epsilon\leq \epsilon_0$, 
  there exists 
  $\ell \in \mathbb{N}$ and $\vqtdell \in \XqttellR$ 
  such that
  \begin{equation*}
    \| u - \vqttell \|_{H^1(Q)} \leq \epsilon
  \end{equation*}
  and the multi-dimensional array 
  $V_{\mathsf{qtd}}^\ell\in \mathbb{R}^{8\times 2^\ell\times 2^\ell\times 2^\ell}$ 
  such that $(V_{{\mathsf{qtd}}}^\ell)_{m, :,:,:} = \scrA^{\ell,m}\vqttell$,
  $m=0,\dots, 7$ admits a patchwise representation with
  \begin{equation*}
    \Ndof \leq C |\log\epsilon|^{\kappa}
  \end{equation*}
  degrees of freedom, with a positive constant $C$ independent of $\epsilon$ and
  \begin{equation*}
\kappa =
\begin{cases}
  4\gev+3 & \text{for patchwise classic QTT}\;,\\
  6\gev+1 & \text{for patchwise transposed order QTT}\;,\\
  3\gev+3 &  \text{for patchwise Tucker-QTT} \;.
\end{cases}
  \end{equation*}
\end{proposition}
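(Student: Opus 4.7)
The plan is to reduce Proposition \ref{prop:QTT-R} to the single-patch case already handled in Theorem \ref{th:QTT-analytic} applied on each of the eight congruent subcubes $R^m$, $m=0,\ldots,7$, exploiting the patchwise quasi-interpolant built from Corollary \ref{cor:exponential}. First I would define the global quasi-interpolant $\fP^{\ell,R} = \cI^\ell\,\PihpellR$, where $\PihpellR$ is the conforming $hp$-projector from Corollary \ref{cor:exponential} and $\cI^\ell$ is the nodal Lagrange interpolant on the uniform tensor-product mesh $\mathcal{T}^{\ell,R}$. Since $\PihpellR$ is $H^1(R)$-conforming and $\cI^\ell$ preserves continuity at matching nodes, $\vqtdell := \fP^{\ell,R} u\in \XqttellR$; the homogeneous boundary condition on $\partial R$ follows from Remark \ref{remark:nullboundary} applied patchwise, using $u\in H^1_0(R)$.

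The $H^1(R)$-error estimate is obtained from the triangle inequality
$\|u-\vqtdell\|_{H^1(R)} \le \|u-\PihpellR u\|_{H^1(R)} + \|(\mathrm{Id}-\cI^\ell)\PihpellR u\|_{H^1(R)}$, where the first term is controlled by Corollary \ref{cor:exponential} and the second by applying Lemma \ref{lemma:interp-error} on each patch after pullback by $\psi^m$ and summing. Both decay like $\exp(-b\ell)$, so the choice $\ell\simeq |\log\epsilon|$ gives $\|u-\vqtdell\|_{H^1(R)}\leq\epsilon$, exactly as in Proposition \ref{prop:hp-reinterp}.

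For the rank analysis, the crucial observation is that the geometric meshes $\cG^\ell$, the one-dimensional $hp$-spaces $\Xhpelloned$, and the auxiliary piecewise-polynomial spaces (the $X^q_{S_1}$, $X^{\tilde q}_{S_2^{\tilde q}}$, $X_{S^q}$ used in the proofs of Lemmas \ref{lemma:qtt-rank}--\ref{lemma:tqtt-rank}) are \emph{patch-independent} after pullback by $\psi^m$. Hence for each format I would assemble the global decomposition by choosing the QTT cores indexed by $q\geq 2$ (respectively, the QTT factors for TQTT) to be a \emph{shared} set of evaluations of patch-independent bases of these auxiliary spaces, while all patch dependence is collected in the first core (respectively, in the Tucker core). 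Concretely, the $m$-th row of $U^1$ in \eqref{eq:patch-qt3} stores the polynomial coefficients of $\vqtdell|_{R^m}$ on the first geometric layer, and the $m$-slice $G^m$ in \eqref{eq:patch-tqtt} stores the Tucker core of the $m$-th patch expansion. The per-patch rank bounds of Lemmas \ref{lemma:qtt-rank}, \ref{lemma:qt3-rank}, \ref{lemma:tqtt-rank} are preserved up to a constant factor (the first core gains a patch dimension of size $8$, but the internal ranks $r_q$ are unchanged), and $\Ndof \le C|\log\epsilon|^\kappa$ follows with the same exponents as in Theorem \ref{th:QTT-analytic}.

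The main technical obstacle is precisely justifying this sharing of cores across patches, since the format definitions \eqref{eq:patch-qt3}--\eqref{eq:patch-tqtt} assign a patch index only to the first (or Tucker) core. This requires revisiting the constructions in the proofs of Lemmas \ref{lemma:qtt-rank}--\ref{lemma:tqtt-rank} to verify that the bases $\{e_n\}$, $\{b_n\}$ used there depend only on the geometric mesh and on the polynomial degree $p\simeq\ell^\gev$, and not on the function $u$ or on the choice of patch. A potential alternative, if direct sharing is unwieldy, is a block-diagonal assembly in which per-patch cores are placed in orthogonal blocks and selected by the first core; this would inflate each rank by a factor of at most $8$, which is absorbed into the constant $C$ and leaves the exponents $\kappa$ of Theorem \ref{th:QTT-analytic} unchanged.
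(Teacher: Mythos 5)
Your proposal is correct and follows essentially the same route as the paper: the error bound is obtained exactly as you describe via the patchwise quasi-interpolant and Corollary \ref{cor:exponential}, and the rank analysis retraces Lemmas \ref{lemma:qtt-rank}--\ref{lemma:tqtt-rank} using the fact that the auxiliary piecewise-polynomial spaces are patch-independent after pullback by $\psi^m$, with all patch dependence absorbed into a first core (resp.\ the Tucker core) of size $8$. Both of your variants in fact appear in the paper's argument --- the internal ranks along the first physical dimension of the classic QTT are simply multiplied by $8$ (your block-diagonal fallback), while the later unfoldings and the other formats use the shared, patch-independent row spaces --- and either way the factor $8$ is absorbed into $C$ without changing $\kappa$.
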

\begin{proof}
 Here, 
 we retrace the steps of the proofs of Lemmas \ref{lemma:qtt-rank},
 \ref{lemma:qt3-rank}, and \ref{lemma:tqtt-rank}, 
 generalizing them to the multipatch case.
 \paragraph{\textbf{Patchwise classic QTT}}
The tensor $\Vqttell$ can be written as the product 
\begin{equation*}
    (\Vqtdell)_{m, i, j, k} = U^0(m)U^1(i_1)\cdots U^\ell(i_\ell)V^1(j_1)\cdots V^\ell(j_\ell)W^1(k_1)\cdots W^\ell(k_\ell),
\end{equation*}
where the bounds on the ranks of the cores $U^1, \dots, U^\ell$ and the first rank of the core $V^1$ are multiplied
by $8$, while the other bounds are left unchanged with respect to the single
patch case. 
The multiplication of the cores $U^0$ and $U^1$ gives the multipatch formulation.
 \paragraph{\textbf{Patchwise transposed order QTT}}
The row space of the unfolding matrices
 \begin{equation*}
   V^{(q)}_{\overline{m \xi_1\eta_1\zeta_1}, \overline{\xi_2\eta_2\zeta_2}} 
    = 
   ( \Vqttell)_{m, \overline{\xi_1\xi_2}, \overline{\eta_1\eta_2}, \overline{\zeta_1\zeta_2}}  
 \end{equation*}
 defined for $m\in{0, \dots, 7}$, 
$\xi_1, \eta_1, \zeta_1 \in \{0, \dots, 2^q-1\}$, and $\xi_2,\eta_2,
  \zeta_2\in \{0, \dots, 2^{\ell-q}-1\}$
  is bounded asymptotically by the same quantity as the one of the unfolding matrix
  in \eqref{eq:qt3-unfolding}, by symmetry. 
  Thus, $\Vqtdell$ admits a decomposition such that
  \begin{equation*}
    (\Vqtdell)_{m, i, j, k} = U^0 (m) U^1(\overline{i_1j_1k_1})\cdots U^\ell(\overline{i_\ell j_\ell k_\ell}).
  \end{equation*}
  By multiplying $U_0 (m)$ and $ U_1(\overline{i_1j_1k_1})$ for all $m=0, \dots,
  7$ and $i_1,j_1, k_1\in \{0,1\}$, we obtain a representation of the form \eqref{eq:patch-qt3}.
  \paragraph{\textbf{Patchwise Tucker-QTT}}
 We Tucker-decompose the tensor $\Vqtdell$, thus obtaining, by the same
 arguments that we used for equation \eqref{eq:Tucker},
 \begin{equation*}
   \Vqtdell = \sum_{\beta_1 ,\beta_2, \beta_3 = 1}^{R_T} \sum_{\beta_0=1}^{R_P}
G_{\beta_0, \beta_1, \beta_2, \beta_3} Z_{\beta_0} \otimes U_{\beta_1} \otimes V_{\beta_2}\otimes W_{\beta_3},
 \end{equation*}
 where $R_T\leq C \ell^{\gev+1}$. Then, by contracting the core $G$ and the
 factor $Z$ over the index $\beta_0$ and by deriving the existence of the block QTT decomposition of
 the Tucker factors $U$, $V$, $W$ as in equation \eqref{eq:Tucker-cores}, 
 we obtain the existence of a representation of $\Vqtdell$ of the form \eqref{eq:patch-tqtt}.
 \myqed
\end{proof}
  \begin{remark}
    In Proposition \ref{prop:QTT-R}, we consider the approximation of functions
    in the cube $R=(-1,1)$ for ease of notation. Nonetheless, the argument and the result
    extend, without major modification, to 
    $\widetilde{R} = (-a_1, b_1) \times (-a_2, b_2) \times (-a_3, b_3)$,
    with $a_i, b_i>0$, $i=1,2 ,3$, and with a point singularity at the origin.
    This implies, by translation, that given a cube of fixed size, 
    we can obtain bounds on patchwise quantized tensor representations that are 
    \emph{uniform in the location of the singularity}.
  \end{remark}
\section{QTT representation of prolongation matrices.} 
\label{sec:prolong}
In order to evaluate the error $\varepsilon_\ell$,
we need a tensor of $\nabla\uqtd^{\ell,\delta}$ evaluated on
the virtual mesh with $L$ levels of refinement
and have it represented using the 
respective tensor decomposition without accessing all its elements.
This can be implemented as a multiplication by the prolongation matrices 
in the respective tensor format.
To introduce the prolongation matrices, we start by
considering the one dimensional piecewise linear space on the virtual mesh 
with 
$\ell$ levels (recall that $I^\ell_j = (2^{-\ell}j, 2^{-\ell}(j + 1))$)
\[
     \Xqttellonedzero = \{v\in H^1((0,1)): v(1) = 0 \text{ and } v_{|_{I^\ell_j}}\in \mathbb{P}_1(I^\ell_j),
    j=0, \dots, 2^\ell-1\}.
\]
Furthermore, we introduce
the one dimensional analysis operator
$\scrA^\ell_{\mathrm{1d}} : H^1((0,1))\to \mathbb{R}^{2^\ell}$ as 
\begin{equation*}
    \left(\scrA^\ell_{\mathrm{1d}} v \right)_i = v(2^{-\ell}i), \qquad i=0, \dots, 2^\ell-1.
\end{equation*}
Then, for every $L>\ell$, the one dimensional prolongation operator 
$P^{(\ell\to L)}_{\mathrm{p.l.}}  \in \mathbb{R}^{2^L\times 2^\ell}$ 
is realized by the matrix such that
\begin{equation}
  \label{eq:1dprolong-pl}
P^{(\ell\to L)}_{\mathrm{p.l.}} 
\left(\scrA^\ell_{\mathrm{1d}}\vqttell \right) 
   =  
     \scrA^L_{\mathrm{1d}}\vqttell  
\qquad \text{for all }\vqttell\in \Xqttellonedzero.
  \end{equation}
In the same vein, the one dimensional prolongation operator for piecewise
constant function is such that
\begin{equation*}
  P_\mathrm{p.c.}^{(\ell\to L)}\left(\scrA^\ell_{\mathrm{1d}}\vqttell    \right) =  \scrA^L_{\mathrm{1d}}\vqttell
\end{equation*}
for all
\begin{equation*}
\vqttell\in X^\ell_{\mathrm{p.c.}, \mathrm{1d}} 
= 
\{v\in L^\infty((0,1)): v_{|_{[x_j, x_{j+1})}}\in \mathbb{P}_0([x_j, x_{j+1})), j=0, \dots, 2^\ell-1\}.
\end{equation*}

Recall that we consider functions $u$ such that $u_{|_{\Gamma}}=0$, 
where $\Gamma = \partial Q \backslash\{x=(x_1,x_2,x_3)\in\partial Q: x_1 x_2 x_3 =0\}$.
In this case, 
the three-dimensional prolongation matrices from mesh level~$\ell$ to $L>\ell$,
can be written as a tensor product of the one-dimensional piecewise 
linear and piecewise constant prolongation matrices, which read:
\begin{equation}\label{eq:pl_prolong}
	 P_\mathrm{p.l.}^{(\ell\to L)} = 
	2^{\ell-L}
	\biggl(
	I^{(\ell)}
	\otimes 
	\begin{bmatrix}
		2^{L-\ell}  \\
		\vdots \\
		2 \\
		1 
	\end{bmatrix}
	+ 
	J^{(\ell)}
	\otimes 
	\begin{bmatrix}
		0  \\
		1 \\
		\vdots \\
		2^{L-\ell} -1
	\end{bmatrix}
	\biggr) 
	\in 
	\mathbb{R}^{2^L \times 2^\ell}
\end{equation}
and
\begin{equation*}
	P_\mathrm{p.c.}^{(\ell\to L)} = 
	I^{(\ell)}
	\otimes 
	\begin{bmatrix}
		1  \\
		\vdots \\
		1 
	\end{bmatrix}_{2^{L-\ell}}\in 
	\mathbb{R}^{2^L \times 2^\ell}
	,
\end{equation*}
respectively, 
where we used the notation
\[
I^{(\ell)} =
	\begin{bmatrix}
		1 \\
		& \ddots \\
		& & \ddots \\
		& & & 1
	\end{bmatrix}_{2^{\ell}\times 2^{\ell}}, \quad 
S^{(\ell)} =
	\begin{bmatrix}
		0 & 1 \\
		 & \ddots & \ddots\\
		& & \ddots & 1 \\
		& & & 0 & 
	\end{bmatrix}_{2^{\ell}\times 2^{\ell}}.
\]
The matrix $P_\mathrm{p.c.}^{(\ell\to L)}$ can be represented with QTT ranks
$1,1,\dots, 1$, as it has Kronecker product structure, since $I^{(\ell)} =
  \left( I^{(1)} \right)^{\otimes \ell}$ and 
\[
	P_\mathrm{p.c.}^{(\ell\to L)} = 
	I^{(\ell)} \otimes e^{\otimes (L-\ell)}, \quad 
	e = \begin{bmatrix} 1  \\  1 \end{bmatrix}.
\]

We now show, in Proposition~\ref{prop:prolong_estimate} below, 
that $P_\mathrm{p.l.}^{(\ell\to L)}$ also has low-rank QTT structure.
For convenience, we introduce the matricization operator $\mathcal{M}: \mathbb{R}^{r_1\times m \times n \times r_2} \to \mathbb{R}^{m r_1\times n r_2}$ such that:
\begin{multline*}
	\left(\mathcal{M}\left( X \right)\right)_{\overline{\alpha_1 i}, \overline{\alpha_2 j}} = \left(X (i, j)\right)_{\alpha_1,\alpha_2}, \\ i = 1,\dots,m, \quad j = 1,\dots,n, \quad \alpha_i = 1,\dots, r_i, \ i =1,2,
\end{multline*}
that allows to recast tensor cores as matrices.
The following proposition holds.

\begin{proposition} \label{prop:prolong_estimate}
	The matrix $P_\mathrm{p.l.}^{(\ell\to L)}$, $L>\ell$, defined in~\eqref{eq:pl_prolong} has explicit QTT representation with ranks $2,2,\dots,2$.
	In particular, for each $i_1,\dots,i_L \in  \{0,1\}$, $j_1,\dots,j_\ell \in  \{0,1\}$ and $j_{\ell+1},\dots,j_L \in  \{0\}$:
	\[
		\left(P_\mathrm{p.l.}^{(\ell\to L)}\right)_{\overline{i_1\dots i_{L}}, \overline{j_1\dots j_{L}}}
		= 
		Q_1(i_1,j_1) \dots Q_L(i_L, j_L)
	\]
	where the matricizations read
	\[
	\begin{split}
		&\mathcal{M}(Q_1) = 
		\begin{bmatrix}
			I & J
		\end{bmatrix},		
		\\
		&\mathcal{M}(Q_i) = 
		\begin{bmatrix}
			I & J \\
			 & J^\top 
		\end{bmatrix}, 
		\ i = 2,\dots, \ell, 
		\\
		&\mathcal{M}(Q_{i}) = 
		\frac{1}{2}
		\begin{bmatrix}
			p & \delta_1 \\
			 \delta_2 & q
		\end{bmatrix},
	  	\ i = \ell +1,\dots, L-1,
	  	\\
	  	&\mathcal{M}(Q_{L}) = 
	  	\frac{1}{2}
		\begin{bmatrix}
			p \\ \delta_2
		\end{bmatrix},
	\end{split}
	\]
	with blocks given by
	\[
	I = \begin{bmatrix} 1 & 0 \\ 0 & 1 \end{bmatrix}, \ \ 
	J = \begin{bmatrix} 0 & 1 \\ 0 & 0 \end{bmatrix}, \ \ 
	p = \begin{bmatrix} 2 \\ 1 \end{bmatrix}, \ \ 
	q = \begin{bmatrix} 1 \\ 2 \end{bmatrix}, \ \ 
	\delta_1 = \begin{bmatrix} 1 \\ 0 \end{bmatrix}, \ \ 
	\delta_2 = \begin{bmatrix} 0 \\ 1 \end{bmatrix}.
	\]
\end{proposition}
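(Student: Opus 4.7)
The plan is to verify the stated QTT representation by direct computation, leveraging the Kronecker product structure of $P_\mathrm{p.l.}^{(\ell\to L)}$ together with standard rank-2 QTT building blocks for the shift matrix and for affine sequences. The starting point is the decomposition
\[
 P_\mathrm{p.l.}^{(\ell\to L)} = I^{(\ell)}\otimes v_1 + S^{(\ell)}\otimes v_2,
\]
where $v_1 = 2^{\ell-L}[2^{L-\ell},\ldots,2,1]^\top$ and $v_2 = 2^{\ell-L}[0,1,\ldots,2^{L-\ell}-1]^\top$. The first $\ell$ QTT cores encode the matrix factor acting on the $2^\ell$-dimensional column space and must simultaneously produce $I^{(\ell)}$ or $S^{(\ell)}$; the last $L-\ell$ cores encode whichever of the two vectors is selected.

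First, I would recall the two elementary rank-(at most $2$) QTT facts needed. (i) The identity $I^{(\ell)} = I^{\otimes \ell}$ has rank-$1$ QTT with every core equal to $I$. (ii) The shift $S^{(\ell)}$ admits the well-known rank-$2$ QTT decomposition whose matricizations, in the $2\times 2$ block form used in the statement, are exactly $[\,I\ \ J\,]$, $\begin{bmatrix} I & J \\ 0 & J^\top \end{bmatrix}$, and $\begin{bmatrix} 0 \\ J^\top\end{bmatrix}$, corresponding to the identity $S^{(\ell)} = \sum_{k=0}^{\ell-1} I^{\otimes(\ell-1-k)}\otimes J \otimes (J^\top)^{\otimes k}$. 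Superposing the rank-$1$ representation of $I^{(\ell)}$ into this rank-$2$ scheme yields exactly the cores $Q_1,\ldots,Q_\ell$ announced, where the first auxiliary state propagates $I^{(\ell)}$ and the second propagates $S^{(\ell)}$.

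Second, I would treat the last $L-\ell$ cores, where the column indices are frozen at $j_i = 0$ and only the row direction remains active. Writing each of $v_1, v_2$ as an affine function of the binary expansion of the row index $\overline{i_{\ell+1}\ldots i_L}$, one obtains the standard rank-$2$ QTT representation of an affine sequence: the first auxiliary state carries the ``running endpoint contribution'' (encoded by $p$ and $q$) and the second carries the ``accumulated bit contribution'' (encoded by $\delta_1$ and $\delta_2$). Matching the scaling factor $2^{\ell-L}$ and the endpoint conditions $v_1(0)=1, v_2(0)=0$ produces precisely the proposed matricizations for $Q_{\ell+1},\ldots,Q_L$; the joining core at position $\ell+1$ is arranged so that the first outgoing state of $Q_\ell$ (which represents $I^{(\ell)}$) is paired with $v_1$, while the second (which represents $S^{(\ell)}$) is paired with $v_2$.

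Finally, I would conclude by multiplying out $Q_1(i_1,j_1)\cdots Q_L(i_L,j_L)$ for fixed binary indices and checking that each scalar entry reproduces the corresponding entry of $2^{\ell-L}\bigl(I^{(\ell)}\otimes v_1 + S^{(\ell)}\otimes v_2\bigr)$. This last step is a routine bookkeeping computation using the identities $I\cdot I = I$, $J\cdot J^\top = \mathrm{diag}(1,0)$, $J^\top\cdot J = \mathrm{diag}(0,1)$, and the fact that exactly one off-diagonal block couples the ``identity branch'' to $v_1$ and the ``shift branch'' to $v_2$. The only non-mechanical step, and therefore the main obstacle, is precisely this bookkeeping: one must track carefully which auxiliary state at the $\ell{+}1$ boundary corresponds to which summand, and ensure that the transition encoded by the $2\times 2$ block pattern in $\mathcal{M}(Q_{\ell+1}),\ldots,\mathcal{M}(Q_{L-1})$ is consistent with the propagation of the two affine sequences down to $Q_L$.
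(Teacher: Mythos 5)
Your proposal is correct and follows essentially the same route as the paper: decompose $P_\mathrm{p.l.}^{(\ell\to L)}$ as $I^{(\ell)}\otimes p^{(L-\ell)} + S^{(\ell)}\otimes q^{(L-\ell)}$, superpose the rank-$1$ identity branch with the rank-$2$ shift branch over the first $\ell$ cores, and couple them at core $\ell+1$ to the rank-$2$ affine-sequence recursion for $(p^{(i)},q^{(i)})$ — the paper merely packages this bookkeeping via the strong Kronecker product. One immaterial slip: the terminal core of the standalone shift-matrix QTT is $\bigl[\,J;\ J^\top\,\bigr]$ rather than $\bigl[\,0;\ J^\top\,\bigr]$, but since in the combined representation the matrix branches never terminate before core $\ell+1$, this does not affect your argument.
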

\begin{proof}
	First, we introduce the notation
	\[
	p^{(i)}
	=
		2^{-i}
	\begin{bmatrix}
		2^{i}  \\
		\vdots \\
		2 \\
		1 
	\end{bmatrix},
	\quad 
	q^{(i)}
	=
	2^{-i}
	\begin{bmatrix}
		0  \\
		1 \\
		\vdots \\
		2^{i} -1
	\end{bmatrix},
	\]
	so that
	\[
		P_\mathrm{p.l.}^{(\ell\to L)} = 
	I^{(\ell)}
	\otimes 
	p^{(L-\ell)}
	+ 
	J^{(\ell)}
	\otimes 
	q^{(L-\ell)}.
	\]
	Since $I^{(\ell)} = I \otimes I^{(\ell-1)}$ and $J^{(\ell)} = I \otimes J^{(\ell-1)} + J \otimes \left( J^\top \right)^{\otimes (\ell -1)}$ and using the operation $\Join$ that denotes the strong Kronecker product between block matrices, in which matrix-matrix multiplication of blocks is replaced by a Kronecker product\footnote{Formally, the strong Kronecker product of two $2\times 2$ block matrices is defined as the following $2\times 2$ block matrix:
	\[
		\begin{bmatrix}
			A_{11} & A_{12} \\
			A_{21} & A_{22}
		\end{bmatrix}
		\Join
		\begin{bmatrix}
			B_{11} & B_{12} \\
			B_{21} & B_{22}
		\end{bmatrix}
		=
		\begin{bmatrix}
			A_{11}\otimes B_{11} + A_{12}\otimes B_{21} & 
			A_{11}\otimes B_{12} + A_{12}\otimes B_{22} \\
			A_{21}\otimes B_{11} + A_{22}\otimes B_{21} & 
			A_{21}\otimes B_{12} + A_{22}\otimes B_{22} \\
		\end{bmatrix}.
	\]
	}
	, we obtain
	\[
	\begin{split}
		P_\mathrm{p.l.}^{(\ell\to L)} = 
		&\begin{bmatrix}
			I & J 
		\end{bmatrix}
		\Join
		\begin{bmatrix}
			I^{(\ell-1)} & J^{(\ell-1)} \\
			& \left( J^\top \right)^{\otimes (\ell -1)}
		\end{bmatrix}
		\Join
		\begin{bmatrix}
			p^{(L-\ell)} \\ q^{(L-\ell)}
		\end{bmatrix}
		= \\
		&\begin{bmatrix}
			I & J 
		\end{bmatrix}
		\Join
		\begin{bmatrix}
			I & J \\
			& J^\top 
		\end{bmatrix}^{\Join (\ell-1)}
		\Join
		\begin{bmatrix}
			p^{(L-\ell)} \\ q^{(L-\ell)}
		\end{bmatrix}.
	\end{split}
	\]
	We complete the proof by the observations that
	\[
		\begin{bmatrix}
			p^{(i)} \\ q^{(i)}
		\end{bmatrix}
		=
		\frac{1}{2}
		\begin{bmatrix}
			p & \delta_1 \\
			 \delta_2 & q
		\end{bmatrix}
		\Join
		\begin{bmatrix}
			p^{(i-1)} \\ q^{(i-1)}
		\end{bmatrix}, 
		\,  i > 1,
		\quad
		p^{(1)} = \frac{1}{2} \begin{bmatrix} 2 \\ 1 \end{bmatrix},
		\quad
		q^{(1)} = \frac{1}{2} \begin{bmatrix} 0 \\ 1 \end{bmatrix}.
	\]
\end{proof}
\begin{corollary}
Let $\vqttell \in \Xqttellonedzero$ and let 
$\scrA^\ell_{\mathrm{1d}} \vqttell$ have QTT ranks $r_1,r_2,\dots,r_{\ell-1}$. 
Then, for any $L>\ell$, 
the vector $\scrA^L_{\mathrm{1d}}\vqttell = \{\vqttell(x_i)\}_{i=0}^{2^L -1}$, $x_i = 2^{-L}\,i$ 
can be represented with QTT ranks equal to $2r_1,2r_2,\dots,2r_{\ell-1}$.
\end{corollary}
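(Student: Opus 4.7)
The plan is to realize $\scrA^L_{\mathrm{1d}}\vqttell$ as the matrix--vector product
$P_\mathrm{p.l.}^{(\ell\to L)}\,\scrA^\ell_{\mathrm{1d}}\vqttell$, which is exactly the defining property of the prolongation operator recorded in~\eqref{eq:1dprolong-pl}, and then combine this identity with the explicit rank-$2$ QTT representation of $P_\mathrm{p.l.}^{(\ell\to L)}$ established in Proposition~\ref{prop:prolong_estimate}. Since a QTT-formatted matrix of ranks $R_1,\ldots,R_{L-1}$ applied to a QTT-formatted vector of ranks $r_1,\ldots,r_{L-1}$ produces a QTT vector of ranks bounded by $R_k\,r_k$, the corollary reduces to a bookkeeping of how the two sets of cores contract.

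First, I would fix QTT cores $V^1,\ldots,V^\ell$ of $\scrA^\ell_{\mathrm{1d}}\vqttell$ realising the ranks $1,r_1,\ldots,r_{\ell-1},1$ by hypothesis, and the cores $Q_1,\ldots,Q_L$ of $P_\mathrm{p.l.}^{(\ell\to L)}$ supplied by Proposition~\ref{prop:prolong_estimate}. The key structural observation is that the column index of $P_\mathrm{p.l.}^{(\ell\to L)}$ is constrained to $j_{\ell+1}=\cdots=j_L=0$, so contraction with the vector cores only happens in the first $\ell$ positions. Accordingly, the product admits the QTT cores
\begin{equation*}
\widetilde U^k(i_k) \;=\; \sum_{j_k\in\{0,1\}} Q_k(i_k,j_k)\otimes V^k(j_k),\quad k=1,\ldots,\ell,
\end{equation*}
followed by $\widetilde U^k(i_k) = Q_k(i_k,0)$ for $k=\ell+1,\ldots,L$. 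A direct computation then shows
$\widetilde U^1(i_1)\cdots \widetilde U^L(i_L) = \bigl(P_\mathrm{p.l.}^{(\ell\to L)}\,\scrA^\ell_{\mathrm{1d}}\vqttell\bigr)_{\overline{i_1\dots i_L}}$, which by~\eqref{eq:1dprolong-pl} equals $\vqttell(2^{-L}\,\overline{i_1\dots i_L})$.

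Reading off the dimensions of the contracted cores gives the ranks: the intermediate rank after position $k$ is at most $2\,r_k$ for $k=1,\ldots,\ell-1$, exactly the bound claimed. For the tail positions $k=\ell,\ldots,L-1$, the rank is inherited directly from the prolongation and is bounded by $2$, which is consistent with (and absorbed into) the statement. Note that compatibility of the homogeneous boundary condition $\vqttell(1)=0$ with the formula~\eqref{eq:pl_prolong} is immediate and requires no modification of the ranks.

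There is no genuine obstacle here: the rank multiplication rule is classical, and Proposition~\ref{prop:prolong_estimate} does the actual work. The only step that warrants care is verifying that one may legitimately contract cores $Q_k$ of $P_\mathrm{p.l.}^{(\ell\to L)}$ with $V^k$ of $\scrA^\ell_{\mathrm{1d}}\vqttell$ despite the asymmetric sizes ($2^L\times 2^\ell$), which is handled by the observation above that the column multi-index of $P_\mathrm{p.l.}^{(\ell\to L)}$ is effectively $\ell$-dimensional.
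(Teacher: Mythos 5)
Your proposal is correct and follows essentially the same route as the paper: the paper also invokes Proposition~\ref{prop:prolong_estimate} for the rank-$2$ QTT structure of $P_\mathrm{p.l.}^{(\ell\to L)}$ and then applies the standard TT-matrix--times--TT-vector rank multiplication rule to the identity \eqref{eq:1dprolong-pl}. Your explicit write-out of the contracted cores and the remark about the padded column multi-index ($j_{\ell+1}=\cdots=j_L=0$) merely make explicit what the paper delegates to the cited multiplication rule.
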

\begin{proof}
According to Proposition~\ref{prop:prolong_estimate}, 
the matrix $P_\mathrm{p.l.}^{(\ell\to L)}$ has ranks $2,2,\dots,2$.
The statement then follows from the fact that the multiplication in \eqref{eq:1dprolong-pl} 
of a TT-matrix with ranks $R_1,\dots,R_{\ell-1}$ by a TT-vector 
with the ranks $r_1,\dots,r_{\ell-1}$, leads to the TT representation with ranks
$R_1r_1,\dots,R_{\ell-1}r_{\ell-1}$, see \cite{IO2011a}. 
\end{proof}

The multidimensional prolongation matrices are assembled as 
Kronecker products of the one-dimensional matrices 
$P_\mathrm{p.l.}^{(\ell\to L)}$ and/or $P_\mathrm{p.c.}^{(\ell\to L)}$.
For example, 
to find the values of $\vqttell\in \Xqttell$ on a mesh with $L$ levels,
the matrix 
\[
	P_\mathrm{p.l.}^{(\ell\to L)} \otimes P_\mathrm{p.l.}^{(\ell\to L)} \otimes P_\mathrm{p.l.}^{(\ell\to L)}
\] 
represented in the respective format 
is applied to the coefficient vector $\scrA^\ell \vqttell$.
\bibliographystyle{siam}
\bibliography{library}
\end{document}